\newcommand{\cols}{\calc}
\newtheorem{theorem}{Theorem}[section]
\newtheorem*{theorem*}{Theorem}
\newtheorem{lemma}[theorem]{Lemma}
\newtheorem*{lemma*}{Lemma}
\newtheorem{claim}[theorem]{Claim}
\newtheorem*{claim*}{Claim}
\newtheorem{proposition}[theorem]{Proposition}
\newtheorem*{proposition*}{Proposition}
\newtheorem{corollary}[theorem]{Corollary}
\newtheorem*{corollary*}{Corollary}
\newtheorem{definition}[theorem]{Definition}
\newcommand{\dprm}{^{\prime \prime}}
\newcommand{\abs}[1]{\left|#1\right|}
\newcommand{\nats}{\mathbb{N}}
\newcommand{\calc}{\mathcal{C}}
\newcommand{\tendsto}{\rightarrow}
\newcommand{\e}{{\rm e}}
\newcommand{\calH}{\mathcal{H}}
\newcommand{\Vflx}{V_{\text{flex}\,}}
\newcommand{\Vbuf}{V_{\text{buf}\,}}
\newcommand{\colsflx}{\cols_{\text{flex}}}
\newcommand{\colsbuf}{\cols_{\text{buf}\,}}
\newcommand{\gnp}[2]{\mathbf{G}(#1,#2)}
\newcommand{\eps}{\varepsilon}
\newcommand{\rg}{\mathbf{G}}
\newcommand{\rH}{\textbf{H}}
\newcommand{\rD}{\textbf{D}}
\def \cG {\mathcal{G}}
\newcommand{\cC}{\mathcal{C}}
\newcommand{\cD}{\mathcal{D}}
\newcommand{\cP}{\mathcal{P}}
\newtcolorbox{kyriakosbox}{
        colframe=cyan!20!white,
        colback =cyan!20!white,
        top=0mm, bottom=0mm, left=0mm, right=0mm,
        arc=0mm,
        fontupper=\color{blue!70!black},
        fonttitle=\bfseries\color{blue!70!black},
        title=Kyriakos:
                        }
\newcommand{\dnp}[2]{\mathbf{D}(#1,#2)}
\newcommand{\PP}{\mathbb{P}}
\newcommand{\EE}{\mathbb{E}}
\DeclareMathOperator{\flex}{flex}
\DeclareMathOperator{\buf}{buf}
\title{Rainbow subgraphs of uniformly coloured randomly perturbed graphs}
\author{
	Kyriakos Katsamaktsis \thanks{
	Department of Mathematics, 
	University College London, 
	Gower Street, London WC1E~6BT, UK. \\
	Email: \texttt{\{kyriakos.katsamaktsis.21|s.letzter\}}@\texttt{ucl.ac.uk}.
    The research of KK is supported by the Engineering and Physical Sciences Research Council [grant number EP/W523835/1], the research of
	SL is supported by the Royal Society. 
	}
    \and Shoham Letzter\footnotemark[1]
    \and
    Amedeo Sgueglia\thanks{
	Fakult\"at f\"ur Informatik und Mathematik,
    Universit\"at Passau, Germany. \\
	Email: \texttt{amedeo.sgueglia}@\texttt{uni-passau.de}.
    While conducting this research, AS was affiliated with University
College London and supported by the Royal Society.}
}
\date{}
\begin{document}

\maketitle
\begin{abstract}
\setlength{\parskip}{\medskipamount}
\setlength{\parindent}{0pt}
\noindent
    For a given $\delta \in (0,1)$, the randomly perturbed graph model is defined as the union of any $n$-vertex graph $G_0$ with minimum degree $\delta n$ and the binomial random graph $\mathbf{G}(n,p)$ on the same vertex set. 
    Moreover, we say that a graph is uniformly coloured with colours in $\mathcal{C}$ if each edge is coloured independently and uniformly at random with a colour from $\mathcal{C}$.

    Based on a coupling idea of McDiarmid, we provide a general tool to tackle problems concerning finding a rainbow copy of a graph $H=H(n)$ in a uniformly coloured perturbed $n$-vertex graph with colours in $[(1+o(1))e(H)]$.
    For example, our machinery easily allows to recover a result of Aigner-Horev and Hefetz concerning rainbow Hamilton cycles, and to improve a result of Aigner-Horev, Hefetz and Lahiri concerning rainbow bounded-degree spanning trees.

    Furthermore, using different methods, we prove that for any $\delta \in (0,1)$ and integer $d \ge 2$, there exists $C=C(\delta,d)>0$ such that the following holds.
    Let $T$ be a tree on $n$ vertices with maximum degree at most $d$ and $G_0$ be an $n$-vertex graph with $\delta(G_0)\ge \delta n$.
    Then a uniformly coloured $G_0 \cup \mathbf{G}(n,C/n)$ with colours in $[n-1]$ contains a rainbow copy of $T$ with high probability.
    This is optimal both in terms of colours and edge probability (up to a constant factor). 
\end{abstract}

\section{Introduction}
    Given \(\delta \in (0,1)\), we define \(\cG_{\delta,n}\) to be the family of graphs on vertex set $[n]$ with minimum degree at least $\delta n$, and we let \(\gnp{n}{p}\) be the binomial random graph on vertex set $[n]$ with edge probability \(p\).
    One of the central themes in extremal combinatorics is determining the minimum degree threshold for a given graph property $\cP$, i.e.\ how large \(\delta\) needs to be so that every \(G \in \cG_{\delta,n}\) satisfies $\cP$.
    Similarly, probabilistic combinatorics aims to determine how large $p$ needs to be for \(\gnp{n}{p}\) to satisfy $\cP$ with high probability\footnote{
    Formally, we say that a sequence of events \((A_n)_{n\in \nats}\) holds \emph{with high probability} if \(\PP[A_n] \tendsto 1\) as \(n\tendsto \infty\).
    }.
    Bohman, Frieze and Martin~\cite{bohman} provided a connection between the extremal and the random graph settings by introducing the \emph{randomly perturbed graph} model.
    For a given $\delta \in (0,1)$, this is defined as \(G_0 \cup \gnp{n}{p}\) where \(G_0 \in \cG_{\delta,n}\), i.e.\ as the graph on \([n]\) whose edge set is the union of the edges of a deterministic graph \(G_0\) with minimum degree at least $\delta n$ and the edges of a random graph \(\gnp{n}{p}\) on the same vertex set.
    For a given $\delta$ and a given graph property $\cP$, a pivotal question in the area is to determine how large $p$ needs to be so that for every $G_0 \in \cG_{\delta,n}$, with high probability, $G_0 \cup \gnp{n}{p}$ satisfies $\cP$.
    More precisely, we say that $\hat{p}=\hat{p}(\delta,\cP,n)$ is a \emph{perturbed threshold} for the property $\cP$ at $\delta$ if there are constants $C > c > 0$ such that for any $p \ge C \hat{p}$ and for any sequence of $n$-vertex graphs $(G_{n})_{n \in \mathbb{N}}$ with $G_{n} \in \cG_{\delta,n}$ we have $\lim_{n\to\infty}\PP\big(G_{n} \cup G(n,p) \in \cP \big)=1$, and for any $p \le c \hat{p}$ there exists a sequence of $n$-vertex graphs $(G_{n})_{n\in\mathbb{N}}$ with $G_n \in \cG_{\delta,n}$ such that $\lim_{n\to\infty}\PP\big( G_{n} \cup G(n,p)\in \cP\big) =0$.
    
	For example, the main result of~\cite{bohman} is that when $\cP$ is the property of being Hamiltonian, $n^{-1}$ is a perturbed threshold for $\cP$ at $\delta$, for any $\delta \in (0,1/2)$.
    This interpolates between the well known result
    that the threshold for the containment of a Hamilton cycle in $\gnp{n}{p}$ is $n^{-1}\log n$, and the classical theorem of Dirac that every $n$-vertex graph with minimum degree at least $n/2$ is Hamiltonian (and thus when $\delta \ge 1/2$, no random edges are needed in the perturbed model).
    Since~\cite{bohman}, there has been a sizeable body of research extending and adapting results from the extremal and the probabilistic setting to the perturbed one, particularly when the property $\cP$ is the containment of a spanning subgraph (e.g.\ trees~\cite{tree_universality,joos_kim_trees,kriv-kwan-sudakov}, factors~\cite{balogh2019tilings,BPSS_triangles,han2019tilings}, bounded-degree subgraphs~\cite{bottcher2017embedding} and powers of Hamilton cycles~\cite{antoniuk2020high,antoniuk,BPSS_square,dudek2020powers,nenadov2018sprinkling}).
    
    Another flourishing trend is to investigate the emergence of rainbow structures in uniformly edge-coloured graphs.
    Given an edge-coloured graph \(G\), a subgraph \(H\) of \(G\) is \emph{rainbow} if each edge of \(H\) has a different colour.
    Moreover we say that a graph \(G\) is \emph{uniformly edge-coloured} in a set of colours \(\cols\) if each edge  of \(G\) gets a colour independently and uniformly at random from \(\cols\).
    Instances of this problem in the random graph setting can be found in~\cite{aigner-horev-trees,frieze-bal-optimal-colours,ferber-optimal-colours,ferber-kriv,frieze-loh}.
    Here we focus on the perturbed setting.

    Let $H=H(n)$ be a given $n$-vertex graph, $\delta \in (0,1)$ and $\cC$ be a set of colours.
    Suppose we would like to find (with high probability) a rainbow copy of $H$ in a uniformly coloured perturbed graph $G \sim G_0 \cup \gnp{n}{p}$, with $G_0 \in \cG_{\delta, n}$ and colours in $\cC$.
	In particular, $G$ must contain a copy of $H$ and thus $p$ needs to satisfy $p \ge C\hat{p}$ where $\hat{p}$ is a perturbed threshold at $\delta$ for the containment of $H$, and $C$ is a large enough constant.
    Moreover, because in a rainbow copy each edge gets a different colour, the colour set $\cC$ must satisfy $|\cC| \ge e(H)$.
	Our following result implies that, in many cases, these two conditions are asymptotically enough to guarantee a rainbow $H$ with high probability.

    \begin{theorem} \label{thm:mc_diarmid_argument}
        Let $p, \eps \in (0,1)$, set $\mu := \frac{\eps(1-p)-p}{(1+\eps)(1-p)}$ and $q:=(1+\eps^{-1})p$, and suppose that $\mu > 0$.
        Let $\calH$ be a collection of subgraphs of $K_n$, each with $m$ edges, and $G_0$ be an $n$-vertex graph.
        Let \(G_0'\) be the random subgraph of \(G_0\), where each edge is sampled independently with probability $\mu$.
        Then
        \[
            \PP\left[
            \begin{array}{c}
            G_0' \cup \text{\(\gnp{n}{p}\) contains} \\
            \text{some \(H \in \calH\)}
            \end{array} \right] \le \PP\left[
            \begin{array}{c}
    		\text{a uniformly edge-coloured \(G_0 \cup \gnp{n}{q}\),} \\
            \text{with colours in $[(1+\eps)m]$,
            contains a rainbow \(H \in \calH\)}
            \end{array}
            \right] .
    	\]
    \end{theorem}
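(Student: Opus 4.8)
The plan is to establish the inequality by a coupling, in the spirit of McDiarmid's argument: to construct a joint distribution of an instance of the left-hand event and an instance of the right-hand event in which the former implies the latter, so that comparing the two probabilities gives the claim.

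I would first record the arithmetic that makes the two parameter sets fit together. A short computation gives $1-\mu=\frac{1}{(1+\eps)(1-p)}$, hence $(1-\mu)(1-p)=\frac{1}{1+\eps}$, so that in $G_0'\cup\gnp{n}{p}$ every edge of $G_0$ is present --- independently of all others --- with probability $1-(1-\mu)(1-p)=\frac{\eps}{1+\eps}$ and every non-edge of $G_0$ with probability $p$; moreover $q=(1+\eps^{-1})p$ rearranges to $q\cdot\frac{\eps m}{(1+\eps)m}=p$. Consequently, if one reveals a uniformly edge-coloured $\Gamma:=G_0\cup\gnp{n}{q}$ with colour set $[(1+\eps)m]$ and fixes any pool $P$ of $\eps m$ colours, then the spanning subgraph of $\Gamma$ consisting of the edges whose colour lies in $P$ has \emph{exactly} the law of $G_0'\cup\gnp{n}{p}$: an edge of $G_0$ lies in it with probability $\frac{\eps m}{(1+\eps)m}=\frac{\eps}{1+\eps}$, a non-edge with probability $q\cdot\frac{\eps m}{(1+\eps)m}=p$, and these edge-events are mutually independent. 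At the level of a single copy this already makes the inequality plausible with room to spare: for a fixed $H$ with $m$ edges and $a:=|E(H)\setminus E(G_0)|$, being rainbow in $\Gamma$ has probability $q^{a}\prod_{j=0}^{m-1}\bigl(1-\tfrac{j}{(1+\eps)m}\bigr)$, and dividing this by $\PP\bigl[H\subseteq G_0'\cup\gnp{n}{p}\bigr]=\bigl(\tfrac{\eps}{1+\eps}\bigr)^{m-a}p^{a}$ and substituting $q=\tfrac{(1+\eps)p}{\eps}$ leaves $\prod_{j=0}^{m-1}\tfrac{(1+\eps)m-j}{\eps m}$, every factor of which is strictly larger than $1$.

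The step I expect to be the real work is that a copy of an $H\in\calH$ found inside the extracted subgraph need not be rainbow, since all $m$ of its colours come from the $\eps m$-element pool $P$. Following McDiarmid, one would instead pass to the colour classes of $\Gamma$: writing $\Gamma_i$ for the class of colour $i$, couple the (negatively dependent) family $(\Gamma_i)_i$ from below by a family of \emph{mutually independent} random graphs $(\widehat\Gamma_i)_i$ with $\widehat\Gamma_i\subseteq\Gamma_i$, with densities chosen so that $\widehat\Gamma:=\bigcup_i\widehat\Gamma_i$ still has the law of $G_0'\cup\gnp{n}{p}$; this is precisely where one spends the $\eps m$ surplus colours and the ratio $q/p=(1+\eps)/\eps$, since the per-class densities have to be set strictly below $\PP[e\in\Gamma_i]$, which equals $\tfrac{1}{(1+\eps)m}$ for $e\in E(G_0)$ and $\tfrac{q}{(1+\eps)m}$ otherwise. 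Each edge of $\widehat\Gamma$ keeps the colour of the class it was drawn from --- its colour in $\Gamma$ --- so a copy of an $H\in\calH$ in $\widehat\Gamma$ whose $m$ edges come from $m$ distinct classes is a rainbow copy in $\Gamma$, and the crux reduces to showing that, with these densities, the random distribution of $\widehat\Gamma$'s edges among its $(1+\eps)m$ classes is spread out enough that whenever $\widehat\Gamma$ contains an $\calH$-member it also contains one witnessed in this way. Pinning down the wastage that the colour-class coupling forces, and verifying that it is covered by the surplus the arithmetic supplies, is the heart of the argument and where I anticipate the main difficulty.
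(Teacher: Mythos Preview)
Your arithmetic is correct, and the observation that restricting $\Gamma=G_0\cup\gnp{n}{q}$ to a fixed pool of $\eps m$ colours reproduces the law of $G_0'\cup\gnp{n}{p}$ is a good sanity check. But the colour-class coupling you outline does not close. Leaving aside whether one can actually realise mutually \emph{independent} $\widehat\Gamma_i$ with $\widehat\Gamma_i\subseteq\Gamma_i$ (the $\Gamma_i$ are edge-disjoint, so the $\widehat\Gamma_i$ would be too, which already sits uneasily with independence), the step you yourself call the crux is a genuine obstruction: nothing in the construction forces an $H$-copy in $\widehat\Gamma=\bigcup_i\widehat\Gamma_i$ to spread its $m$ edges over $m$ different classes. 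The colour an edge of $\widehat\Gamma$ carries is just its colour in $\Gamma$, and there is no reason two edges of the only available $H$-copy should not share one. So your reduction leaves you with a statement as hard as the theorem itself; ``verifying that the wastage is covered by the surplus'' is not a calculation but a missing combinatorial argument you have not supplied.

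The paper avoids this by running McDiarmid \emph{edge by edge} rather than colour by colour. One interpolates through multi-coloured models $\Gamma_0,\dots,\Gamma_N$ (with $N=\binom{n}{2}$), where in $\Gamma_i$ the first $i$ potential edges are treated as on the right-hand side (present according to $G_0\cup\gnp{n}{q}$ and assigned a single uniform colour) and the remaining edges as on the left-hand side (present according to $G_0'\cup\gnp{n}{p}$ and assigned \emph{all} $(1+\eps)m$ colours simultaneously). Then $\Gamma_0$ and $\Gamma_N$ realise the two sides exactly, and one shows $\PP[\Gamma_{i-1}\text{ has a rainbow }H\in\calH]\le\PP[\Gamma_i\text{ has a rainbow }H\in\calH]$ for each $i$. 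Conditioning on the configuration off $e_i$, the only nontrivial case is when a rainbow copy exists if $e_i$ is present with all colours but none exists without $e_i$; then at least $\eps m$ of the $(1+\eps)m$ colours for $e_i$ complete some rainbow copy, so the $\Gamma_i$-probability is at least $\eps/(1+\eps)$ for $e_i\in E(G_0)$ and at least $q\cdot\eps/(1+\eps)=p$ otherwise, matching the $\Gamma_{i-1}$-probabilities on the nose. This is where your identities $1-(1-\mu)(1-p)=\eps/(1+\eps)$ and $q\cdot\eps/(1+\eps)=p$ are actually used.
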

	The proof of~\Cref{thm:mc_diarmid_argument} builds upon an ingenious coupling idea of McDiarmid~\cite{Mcdiarmid}, which has already been used for rainbow problems in random graphs by Ferber and Krivelevich~\cite{ferber-kriv}. 
    Our result is extremely versatile and provides a general machinery to translate existence results into rainbow ones.
    Indeed, let $p=o(1)$ and fix $\eps \in (0,1)$ as in the statement of the theorem.
    Let $\delta \in (0,1)$ and suppose that $G_0 \in \cG_{n,\delta}$.
	Then $\delta(G_0') \ge \eps \delta n / 2$ with high probability.
	Now observe that if $p$ is large enough so that, for every $G_1 \in \cG_{n,\eps\delta/2}$, the perturbed graph $G_1 \cup \gnp{n}{p}$ contains a copy of $H$ with high probability, then \Cref{thm:mc_diarmid_argument} implies the following: for every $G_0 \in \cG_{n,\delta}$, if $G_0 \cup \gnp{n}{(1+\eps^{-1})p}$ is uniformly edge-coloured in $[(1+\eps)e(H)]$, then with high probability it contains a rainbow copy of $H$.
    I.e.\ a perturbed threshold at $\eps \delta/2$ for containing $H$ provides an upper bound on the `rainbow perturbed threshold' at $\delta$ for containing a rainbow $H$ when we are allowed to use $(1+\eps)e(H)$ colours.

    Note that in general we cannot conclude that this gives the optimal edge probability for a rainbow $H$. 
    For example, consider the case of $H$ being a $K_3$-factor.
    Then $n^{-2/3}$ is a perturbed threshold for $0<\delta<1/3$ (see~\cite{balogh2019tilings}), $\log n/n$ is a perturbed threshold for $\delta=1/3$ (see~\cite{BPSS_triangles}) and $n^{-1}$ is a perturbed threshold for $1/3 < \delta < 2/3$ (see~\cite{han2019tilings}), while by the Corr\'adi-Hajnal Theorem no random edges are needed when $\delta \ge 2/3$.
    In particular, observe that the perturbed threshold has a `jump' at $\delta=1/3$.
    Therefore, while the existence threshold at $1/3$ is $\log n/n$, \Cref{thm:mc_diarmid_argument} needs $p \ge C n^{-2/3}$ to guarantee a rainbow $H$ with high probability when $\delta = 1/3$, where $C$ is a large enough constant.
    Indeed, using the same notation as in \Cref{thm:mc_diarmid_argument}, we can find graphs $G_0 \in \cG_{n,1/3}$ such that with high probability the minimum degree of $G_0'$ drops strictly below $n/3$. Then, by the discussion above, for $G_0' \cup \gnp{n}{p}$ to contain a triangle factor, we need $p \ge Cn^{-2/3}$.
    
    However, if we have a function $\hat{p}$ which is a perturbed threshold for all $\delta \in (0,\delta_0)$, then \Cref{thm:mc_diarmid_argument} implies that $\hat{p}$ is also a rainbow perturbed threshold at every $\delta \in (0,\delta_0)$ when colouring with $(1+o(1))e(H)$ colours. 
    
    For example, this is the case for rainbow trees.
    Krivelevich, Kwan and Sudakov~\cite{kriv-kwan-sudakov} proved that for any $\delta \in (0,1/2)$ the function $n^{-1}$ is a perturbed threshold for containing a given spanning bounded-degree tree.
    Thus \Cref{thm:mc_diarmid_argument} immediately implies the following.
	\begin{corollary}
    \label{cor:rainbow:trees}
		For any $\eps, \delta \in (0,1)$ and $d \ge 2$ an integer, there exists $C=C(\eps,\delta,d)$ such that the following holds.
		Let $T$ be an $n$-vertex tree with maximum degree at most $d$. Then a uniformly coloured $G_0 \cup \gnp{n}{C/n}$ with $G_0 \in \cG_{\delta,n}$ and colours in $[(1+\eps)n]$ admits a rainbow copy of $T$, with high probability.
	\end{corollary}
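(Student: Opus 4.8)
The plan is to derive the corollary directly from \Cref{thm:mc_diarmid_argument}. I would apply that theorem with $\calH$ equal to the family of all copies of $T$ in $K_n$, so that every member has $m=e(T)=n-1$ edges and the event ``$G$ contains some $H\in\calH$'' is just ``$G$ contains a copy of $T$''; it then remains to pick the two free parameters. In the role of its $\eps$ I would take a value $\alpha=\alpha(n)\in(0,1)$ with $(1+\alpha)(n-1)=\lfloor(1+\eps)n\rfloor$, which exists for all large $n$ because $\tfrac{\lfloor(1+\eps)n\rfloor}{n-1}-1\to\eps\in(0,1)$; this also forces $\alpha\to\eps$, so that the colour set on the right-hand side of the theorem is exactly $[(1+\eps)n]$. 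In the role of its $p$ I would take $p=p(n)=C_1/n$ for a constant $C_1$ fixed below; then $\mu=\tfrac{\alpha(1-p)-p}{(1+\alpha)(1-p)}\to\tfrac{\eps}{1+\eps}>0$, so the hypothesis $\mu>0$ holds and in fact $\mu\ge\mu_0$ for some constant $\mu_0=\mu_0(\eps)>0$ once $n$ is large.

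The second step is to show that the left-hand side of \Cref{thm:mc_diarmid_argument} tends to $1$; then so does the right-hand side. Since $\delta(G_0)\ge\delta n$ and $G_0'$ keeps each edge of $G_0$ independently with probability $\mu\ge\mu_0$, a Chernoff bound together with a union bound over the $n$ vertices gives that with high probability $\delta(G_0')\ge\mu_0\delta n/2=:\delta'n$, i.e.\ $G_0'\in\cG_{\delta',n}$, and note $\delta'\in(0,1/2)$. By the result of Krivelevich, Kwan and Sudakov~\cite{kriv-kwan-sudakov} mentioned above, $n^{-1}$ is a perturbed threshold at $\delta'$ for containing a fixed spanning tree of maximum degree at most $d$; hence there is a constant $C_1=C_1(\delta',d)=C_1(\eps,\delta,d)$ such that, uniformly over $G_1\in\cG_{\delta',n}$, with high probability $G_1\cup\gnp{n}{C_1/n}$ contains a copy of $T$. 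Taking $p:=C_1/n$ and conditioning on $G_0'$ (which is independent of $\gnp{n}{p}$), one obtains $\PP[\,G_0'\cup\gnp{n}{p}\text{ contains }T\,]\ge(1-o(1))\,\PP[\,G_0'\in\cG_{\delta',n}\,]=1-o(1)$. Thus, with high probability, a uniformly $[(1+\eps)n]$-coloured $G_0\cup\gnp{n}{q}$ contains a rainbow copy of $T$, where $q=(1+\alpha^{-1})p$.

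The last step is cosmetic: since $\alpha\to\eps$ we have $q=(1+\alpha^{-1})C_1/n\le C/n$ for the constant $C:=(1+2\eps^{-1})C_1=C(\eps,\delta,d)$ and all large $n$, and, coupling $\gnp{n}{q}\subseteq\gnp{n}{C/n}$, a uniform colouring of $G_0\cup\gnp{n}{C/n}$ restricts to a uniform colouring of the subgraph $G_0\cup\gnp{n}{q}$ while adding edges cannot turn a rainbow copy of $T$ into a non-rainbow one; hence the event above for $\gnp{n}{q}$ implies it for $\gnp{n}{C/n}$, which is the corollary. I do not expect a genuine obstacle here: \Cref{thm:mc_diarmid_argument} — through McDiarmid's coupling — does the work, and the only two points that deserve a moment of care are the choice of $\alpha(n)$ that makes the two colour-set sizes coincide and the Chernoff estimate guaranteeing that $G_0'$ still has linear minimum degree.
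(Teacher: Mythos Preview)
Your proposal is correct and follows essentially the same route as the paper: apply \Cref{thm:mc_diarmid_argument} with $\calH$ the set of copies of $T$, use Chernoff to show $G_0'$ has linear minimum degree, and invoke the Krivelevich--Kwan--Sudakov result to make the left-hand side tend to $1$. The paper leaves implicit the two cosmetic points you spell out carefully --- adjusting the $\eps$-parameter to $\alpha(n)$ so that $(1+\alpha)(n-1)$ matches $\lfloor(1+\eps)n\rfloor$, and the monotonicity step passing from $q$ to $C/n$ --- but the substance is identical.
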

    Observe that, because of the result from~\cite{kriv-kwan-sudakov} cited above, \Cref{cor:rainbow:trees} has the optimal edge probability (up to a constant factor).
    Moreover, it improves upon a result of Aigner-Horev, Hefetz and Lahiri~\cite{aigner-horev-trees}, who proved the same conclusion with the $C/n$-term in the probability replaced by $\omega(1)/n$, and confirms their conjecture that $C/n$ is already enough.   

    Similarly, \Cref{thm:mc_diarmid_argument} has consequences for rainbow Hamilton cycles.
    Indeed, it implies that for any $\eps, \delta \in (0,1)$, there exists $C=C(\eps,\delta)$ such that for any $G_0 \in \cG_{\delta,n}$ we have that a uniformly coloured $G_0 \cup \gnp{n}{C/n}$ with colours in $[(1+\eps)n]$ admits a rainbow Hamilton cycle with high probability.
    We remark that this was already proved by Aigner-Horev and Hefetz \cite{aigner-horev_cycle} using different, ad hoc methods.

    \Cref{thm:mc_diarmid_argument} still leaves open the question if the extra colours are needed. Namely, are $e(H)$ colours enough to guarantee a rainbow $H$ with high probability?
    This seems to be a much more challenging problem and, even for specific choices of $H$, not much is known.
    To the best of our knowledge, the only known rainbow result in the perturbed graph setting with the exact number of colours is offered by~\cite{KLS}, where we show that a uniformly coloured $G_0 \cup \gnp{n}{C/n}$ with colours in $[n]$ contains a rainbow Hamilton cycle with high probability (in fact, we prove a version of this for directed graphs).
    This is clearly best possible both in terms of the edge probability (up to a constant factor, from the result of~\cite{bohman} cited above) and the number of colours (since a Hamilton cycle has $n$ edges).
    Here we pursue this direction further and give an exact result for the containment of rainbow bounded-degree trees.
    
    \begin{theorem}
    \label{thm:main}
        Let $\delta \in (0,1)$ and let $d \ge 2$ be a positive integer. Then there exists $C=C(\delta,d) > 0$ such that the following holds.
        Let $T$ be a tree on $n$ vertices with maximum degree at most $d$,
        let $G_0$ be a graph on $n$ vertices with minimum degree at least $\delta n$ and suppose
        \(G \sim G_0\cup \gnp{n}{C/n}\) is uniformly coloured in \([n-1]\). 
        Then,
        with high probability, \(G\) contains a rainbow copy of \(T\).
    \end{theorem}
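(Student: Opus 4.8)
The plan is to split $T$ into a \emph{core} $T^\circ$, embedded first, and a linear-sized \emph{flexible part} — many leaves, or the interiors of many bare paths — used at the very end to mop up exactly the colours the core leaves unused. Fix $\delta,d$, let $C$ be a large constant, split $\gnp{n}{C/n}$ into a bounded number of independent sprinkled copies $\gnp{n}{c_i/n}$ with each $c_i$ large, and fix once and for all a partition $[n-1]=\cols_{\mathrm{main}}\sqcup\cols_{\mathrm{res}}$ in which $|\cols_{\mathrm{res}}|$ is a small but linear fraction of $n$. By a standard lemma (Krivelevich), since $\Delta(T)\le d$ the tree $T$ either has at least $\beta n$ leaves or contains at least $\beta n$ vertex-disjoint bare paths of some fixed length $\ell_0=\ell_0(d)$, where $\beta=\beta(d)>0$. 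In the first case let $L$ be a set of $\Theta(n)$ leaves chosen to spread over as many distinct parents as possible, put $V(F):=L$ and $T^\circ:=T-L$; in the second let $F$ be the union of the interiors of $\Theta(n)$ of the bare paths and let $T^\circ$ be obtained from $T$ by contracting each of these bare paths to a single edge. In both cases $T^\circ$ is a tree with $\Delta(T^\circ)\le d$ on $n-|V(F)|$ vertices, and $F$ attaches to $T^\circ$ along a bounded-degree set of \emph{attachment points} $P\subseteq V(T^\circ)$ (the parents of the chosen leaves, resp.\ the endpoints of the chosen bare paths).

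Next, reserve a host reservoir: pick a uniformly random $W\subseteq[n]$ with $|W|=|V(F)|$, keeping $|W|$ below $\delta n/2$ (and still linear — shrink the $\Theta(n)$ above if necessary), and set $U:=[n]\setminus W$. By Chernoff and a union bound, w.h.p.\ $G_0[U]$ has linear minimum degree, every vertex of $U$ has linearly many $G_0$-neighbours in $W$, and every vertex of $W$ has linearly many $G_0$-neighbours in $U$; crucially we postpone revealing the colours of the sprinkled crossing edges between $U$ and $W$. We then embed $T^\circ$ onto \emph{all} of $U$ as a rainbow copy using colours only from $\cols_{\mathrm{main}}$. The soft approach would be to apply \Cref{cor:rainbow:trees} to the spanning subgraph of $(G_0\cup\gnp{n}{c_1/n})[U]$ formed by the $\cols_{\mathrm{main}}$-coloured edges: conditioning the uniform colouring this way leaves a uniformly coloured perturbed graph (a random subgraph of $G_0[U]$, still of linear minimum degree, together with a binomial graph of the appropriate density), and since $e(T^\circ)=n-1-\Theta(n)$ we can arrange $|\cols_{\mathrm{main}}|\ge(1+\eps)\,e(T^\circ)$ for a small constant $\eps>0$, whence \Cref{cor:rainbow:trees} produces some rainbow embedding. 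However, the completion step needs more — it needs the images $\phi(P)$ of the attachment points to be \emph{well-spread} in $U$, so that $G_0$ robustly connects $W$ to $\phi(P)$ on both sides. We therefore instead prove a bespoke rainbow embedding of $T^\circ$ into $(G_0\cup\gnp{n}{c_1/n})[U]$, with palette $\cols_{\mathrm{main}}$, that additionally places $\phi(P)$ so that $|\phi(P)\cap N_{G_0}(w)|$ is linear for every $w\in W$ — possible because, running the embedding in a depth-first fashion, each attachment point has $\Omega(n)$ admissible images and one may choose among them to balance the frontier. This freezes the leftover palette $\cols_{\mathrm{free}}$ (the colours of $[n-1]$ not used by the core; it has size exactly $|V(F)|$ and contains $\cols_{\mathrm{res}}$) and the set $\phi(P)$.

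It remains to embed $F$ into $W$, adjacent to the prescribed attachment points, using each colour of $\cols_{\mathrm{free}}$ exactly once. In the leaf case this asks for a perfect matching between the leaf-slots — each carrying its prescribed attachment image $\phi(\mathrm{parent}(\ell))\in\phi(P)$ — and $W$, whose edges bijectively realise $\cols_{\mathrm{free}}$; in the bare-path case it asks for a spanning family of internally disjoint rainbow paths of length $\ell_0$ between the prescribed pairs $\phi(u_i),\phi(v_i)\in\phi(P)$, together using exactly $\cols_{\mathrm{free}}$. We build it greedily: the linear minimum degrees from the reservoir step together with the well-spread frontier keep a Hall-type condition valid throughout, and a short rotation/augmentation (or absorption) argument handles the last few vertices; to meet the colour constraint we reveal, only now, the colours of the sprinkled crossing edges, so that each such edge carries an independent uniform colour and we can match the prescribed — and essentially arbitrary — palette $\cols_{\mathrm{free}}$ of exactly the right cardinality. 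Combining the core embedding with this completion yields, w.h.p., a rainbow copy of $T$ spanning $[n]$ that uses every colour of $[n-1]$ exactly once.

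The crux is the interplay of the last two steps. Using \emph{exactly} $n-1$ colours forces us to land on whatever leftover palette the core embedding produces and then to realise that adversarial palette by a rainbow perfect matching — or a rainbow path-factor — in a graph that is only dense-plus-sparse-random; this both rules out a black-box use of \Cref{cor:rainbow:trees} and dictates the bespoke embedding (controlling simultaneously the palette and the frontier) and the colours-revealed-last completion. I expect the bare-path case to be the more delicate, as it additionally requires routing $\Theta(n)$ internally disjoint rainbow paths of fixed length through $W$, which is where the freedom in the constant $\ell_0$ and the sprinkled edges inside $W$ are used.
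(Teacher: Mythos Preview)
Your high-level decomposition into the leaf case and the bare-path case, with a core embedding followed by a completion that must realise the exact leftover palette, is also how the paper proceeds. However, the two substantive steps you pass over are precisely where all the work lies, and as written your argument has genuine gaps.

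\textbf{The completion step is the crux, and it is not done.} In the leaf case you need a rainbow perfect matching between the parent images and the reservoir $W$ that uses \emph{exactly} the prescribed set $\cols_{\mathrm{free}}$ of $|W|$ colours. A greedy--Hall argument will give you a rainbow matching, but not one hitting a target colour set of exactly the right size; rotation/augmentation arguments absorb leftover \emph{vertices}, not leftover \emph{colours}. The paper does not solve this directly either: it reduces this rainbow bipartite matching problem to finding a rainbow directed Hamilton cycle in an auxiliary perturbed digraph, and then invokes a separate theorem (\cite{KLS}) for that. In the bare-path case the situation is worse: you must route $\Theta(n)$ internally disjoint rainbow paths of fixed length through $W$, covering $W$ and $\cols_{\mathrm{free}}$ exactly. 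The paper handles this by building, \emph{before} the core embedding, an absorbing structure out of $(v,c)$-gadgets tied together via a Montgomery template, so that any small leftover of vertices and colours can be swallowed. You build nothing in advance and propose to absorb at the end; there is no known short way to do this. (Your contraction of each bare path to a single edge also means that the embedded contracted edges must later be deleted, which injects their colours back into the leftover palette and further entangles the accounting.)

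\textbf{The bespoke core embedding is also not justified.} You need the images of the attachment points $\phi(P)$ to satisfy $|\phi(P)\cap N_{G_0}(w)|=\Omega(n)$ for every $w\in W$; a black-box application of \Cref{cor:rainbow:trees} does not give this, and your one-line ``depth-first with balancing'' sketch is not a proof. The paper sidesteps this entirely: it embeds the core using only the random edges $\gnp{n}{C/n}$ via \Cref{thm:intro-rainbow-almost-spanning-embedding}, so that by symmetry the image of the parent set is a uniformly random subset of the host, and the well-spread property then follows from Chernoff. This is the idea you are missing for the core step.
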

    
    \Cref{thm:main} provides a rainbow variant of the result of Krivelevich, Kwan and Sudakov~\cite{kriv-kwan-sudakov} cited above.
    Observe that \Cref{thm:main} has the optimal number of colours, and  also has the optimal edge probability (up to a constant factor).
    For comparison, this improves upon the result of Aigner-Horev, Hefetz and Lahiri~\cite[Theorem 1.3]{aigner-horev-trees}, who required edge probability \(\omega(n^{-1})\) and \(\eps n\) additional colours to get the same conclusion.

    An important step of our proof of \Cref{thm:main} relies on the following theorem which allows to embed in a rainbow fashion an almost-spanning bounded-degree tree in a uniformly coloured $\gnp{n}{p}$ provided $p \ge C/n$ for large enough $C$. 
    \begin{theorem} 
    \label{thm:intro-rainbow-almost-spanning-embedding}
        Let \(\eps \in (0,1)\) and let \(d \ge 2\) be a positive integer.
        Then there exists \(C=C(\eps,d)>0\) such that the following holds.
        Let $T$ be a tree on at most $(1 - \eps)n$ vertices with maximum degree $d$ and suppose that $\rg \sim \gnp{n}{C/n}$ is coloured uniformly in $[n]$. Then, with high probability, \(\rg\) contains a rainbow copy of \(T\).
    \end{theorem}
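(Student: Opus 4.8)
The strategy is to deduce this from the coupling tool \Cref{thm:mc_diarmid_argument}, taking the deterministic graph $G_0$ there to be the \emph{empty} graph, and feeding it the classical colourless almost-spanning embedding theorem of Alon, Krivelevich and Sudakov: for every $\eta>0$ and integer $d\ge 2$ there is $C_0=C_0(\eta,d)$ such that, with high probability, $\gnp{n}{C_0/n}$ contains a copy of every tree on at most $(1-\eta)n$ vertices with maximum degree at most $d$. The conceptual point is that the multiplicative blow-up from $m$ to $(1+\eps)m$ colours in \Cref{thm:mc_diarmid_argument} can be arranged to land exactly on $n$ colours, and this is possible precisely because $T$ has at most $(1-\eps)n$ vertices, i.e.\ because we have a linear surplus of colours.

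The first step is to normalise the size of $T$. Since $d\ge 2$, by repeatedly attaching a new pendant vertex to a vertex of current degree at most $1$ (that is, by appending a path) we may enlarge $T$ to a tree $T^+$ with exactly $\lceil(1-\eps/2)n\rceil$ vertices and $\Delta(T^+)\le d$; as $T\subseteq T^+$, it suffices to find a rainbow copy of $T^+$. For $n$ large, $m:=e(T^+)=\lceil(1-\eps/2)n\rceil-1$ lies strictly between $n/2$ and $n$, so $\eps':=\tfrac{n}{m}-1$ lies in $(0,1)$, and, crucially, $(1+\eps')m=n$.

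Now apply \Cref{thm:mc_diarmid_argument} with $\calH$ the family of all copies of $T^+$ in $K_n$ (each with $m$ edges), with $G_0$ the empty graph on $[n]$ — so that $G_0'$ is also empty and the left-hand side of the inequality is simply $\PP[\gnp{n}{p}\text{ contains a copy of }T^+]$ — with the parameter ``$\eps$'' equal to $\eps'$, and with $p:=C_0/n$ where $C_0=C_0(\eps/4,d)$. For $n$ large the quantity $\mu$ in the statement is positive (it tends to $\eps'/(1+\eps')>0$ as $p\to 0$), so the hypotheses are met; moreover $v(T^+)\le(1-\eps/4)n$, so by the Alon–Krivelevich–Sudakov theorem the left-hand side tends to $1$. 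Since $(1+\eps')m=n$, the right-hand side now tells us that, with high probability, a uniformly $[n]$-coloured $\gnp{n}{q}$ contains a rainbow copy of $T^+$, where $q=(1+\eps'^{-1})C_0/n$. As $\eps'^{-1}=m/(n-m)\le 2/\eps$, we get $q\le C/n$ for a suitable $C=C(\eps,d)$; and since revealing a $\gnp{n}{q}$-distributed subgraph of $\gnp{n}{C/n}$ (keeping each edge independently with probability $qn/C$) together with the induced colouring yields a uniformly $[n]$-coloured $\gnp{n}{q}$ sitting inside $\gnp{n}{C/n}$, a rainbow $T^+$ in the former is also a rainbow $T^+$ in the latter. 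Hence, with high probability, $\gnp{n}{C/n}$ contains a rainbow copy of $T^+$, and therefore of $T$.

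I do not expect a real obstacle: once \Cref{thm:mc_diarmid_argument} and the Alon–Krivelevich–Sudakov embedding are available, the argument is essentially bookkeeping. The only delicate point — and the crux of the reduction — is to choose the parameters so that the colour set $[(1+\eps')m]$ produced by \Cref{thm:mc_diarmid_argument} is exactly $[n]$; this is what forces the padding of $T$ to a tree on the prescribed number $\lceil(1-\eps/2)n\rceil$ of vertices, with the matching choice $\eps'=n/e(T^+)-1\in(0,1)$, and it is exactly here that the hypothesis $v(T)\le(1-\eps)n$ is used.
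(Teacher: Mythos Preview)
Your proof is correct and follows essentially the same route as the paper: combine the Alon--Krivelevich--Sudakov almost-spanning embedding with a McDiarmid-style coupling to pass to the rainbow setting. The only difference is that the paper invokes \Cref{thm:ferber-kriv-conseq} directly (which already outputs the palette $[n]$ when the target has at most $(1-\eps)n$ edges), whereas you route through \Cref{thm:mc_diarmid_argument} with $G_0$ empty and then pad $T$ to force $(1+\eps')m=n$ --- a harmless detour.
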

    
    Aigner-Horev, Hefetz and Lahiri~\cite{aigner-horev-trees} already proved that the same conclusion holds when $C/n$ is replaced by $\omega(1)/n$.
    They conjectured that $\omega(1)/n$ can be replaced by $C/n$ and thus \Cref{thm:intro-rainbow-almost-spanning-embedding} resolves their conjecture.
    We remark that \Cref{thm:intro-rainbow-almost-spanning-embedding} is an immediate consequence of two previous results: the uncoloured version of \Cref{thm:intro-rainbow-almost-spanning-embedding} proved by Alon, Krivelevich and Sudakov (c.f.~Theorem~1.1 in~\cite{AKS}) and a general tool of Ferber and Krivelevich~\cite{ferber-kriv} (c.f.~\Cref{thm:ferber-kriv-conseq}) which allows to translate uncoloured results into rainbow ones.
    However, for our purposes, \Cref{thm:intro-rainbow-almost-spanning-embedding} will not be enough and we will need a more general version, namely when $\rg$ is a random subgraph of a pseudorandom graph.
    For a precise definition of what we mean by pseudorandom and why this generalisation is needed, we refer the reader to \Cref{sec:almost-spanning}.
    
	\paragraph{Organisation.}
		The rest of the paper is organised as follows.
		In \Cref{sec:mc_diarmid} we prove \Cref{thm:mc_diarmid_argument} and in \Cref{sec:almost-spanning} we prove a more general version of \Cref{thm:intro-rainbow-almost-spanning-embedding} (c.f.~\Cref{thm:rainbow-almost-spanning-embedding}).
		\Cref{sec:overview} provides an outline of our arguments for \Cref{thm:main}, together with the tools and auxiliary lemmas we use in its proof.
		The proof splits into two cases, according to the structure of the tree $T$ we wish to embed: when $T$ has many leaves (c.f.~\Cref{thm:many_leaves}, proved in \Cref{sec:many_leaves}) and when $T$ has many bare paths (c.f.~\Cref{thm:many_paths}, proved in \Cref{sec:many_paths}).
        We then finish by some concluding remarks in \Cref{sec:concluding}.
		One supplementary proof is moved to \Cref{appendix}.
    
	\paragraph{Notation.}        
        Given a graph $G$, a vertex $v \in V(G)$ and a subset $X \subseteq V(G)$, $E_G(v,X)$ denotes the set of edges of the form $vx$ with $x \in X$, and \(N_G(X)\) denotes the set of vertices with at least one neighbour in $X$.
        A \emph{bare path} in $G$ is a path whose interior vertices have degree $2$ in $G$.
        For a graph $G$ and \(p\in[0,1]\), the \emph{\(p\)-random subgraph} of \(G\), denoted by \(G_p\), is the random graph resulting from sampling each edge of \(G\) independently with probability \(p\).

        A \emph{digraph} $D$ is a set of vertices together with a set of ordered pairs of distinct vertices and the \emph{minimum semi-degree} $\delta^0(D)$ of $D$ is the minimum over in- and out-degrees of vertices in $D$. 
        Moreover \(\dnp{n}{p}\) denotes the binomial random digraph on \(n\) vertices, that is the digraph on $[n]$ where each ordered pair of distinct vertices forms a directed edge independently with probability \(p\).

		Given an edge-coloured graph $G$, we denote the colour of an edge $e$ by $\cols(e)$ and the set of colours on the edges of a subgraph \(G'\) by \(\cols(G')\).
		Moreover we say that \(G'\) is \emph{spanning} in a colour set \(\cols'\) if \(\cols(G') = \cols'\).
        
        For $a$, $b$, $c \in (0, 1]$, we write, for example, $a \ll b \ll c$ in our statements to mean that there are increasing functions $f, g : (0, 1] \to (0, 1]$ such that whenever $a \le f (b)$ and $b \le g(c)$, the subsequent statement holds.

        Throughout, \(\log n\) denotes the natural logarithm.

\section{McDiarmid argument for randomly perturbed graphs} 
\label{sec:mc_diarmid}
	As alluded to in the introduction, the proof of \Cref{thm:mc_diarmid_argument}, relating the probability of finding a rainbow $H$ in a perturbed graph to that of finding a copy of $H$ in an uncoloured perturbed graph, is based on a coupling argument. This is inspired by a result of Ferber and Krivelevich \cite[Theorem 1.2]{ferber-kriv}, which in turn uses a coupling trick due to McDiarmid \cite{Mcdiarmid}.
    \begin{proof}[Proof of \Cref{thm:mc_diarmid_argument}]
        Let $\cC:=[(1+\eps)m]$ be the palette of colours.
        We define a sequence of graphs $\Gamma_0, \dots, \Gamma_N$, where $N := \binom{n}{2}$ and each graph is equipped with a colouring of its edges, where we allow an edge to be coloured with multiple colours.
        
        Let $e_1, \dots, e_N$ be an arbitrary enumeration of all the edges of $K_n$.
        For $0 \le i \le N$ define $\Gamma_i$ as follows.
		\begin{itemize}
			\item
				For $1 \le j \le i$,
				\begin{itemize}
					\item
						If $e_j \in E(G_0)$, add $e_j$ to $\Gamma_i$ and assign it a colour uniformly at random from $\cC$. 
					\item
						If $e_j \not\in E(G_0)$, add $e_j$ to $\Gamma_i$ with probability $q$ and assign it a colour uniformly at random from $\cC$.
				\end{itemize}
			\item
				For $j > i$,
				\begin{itemize}
					\item
						If $e_j \in E(G_0)$, then add $e_j$ to $\Gamma_i$ with probability $\eps/(1+\eps)$ and assign it all colours from $\cC$. 
					\item
						If $e_j \not\in E(G_0)$, then add $e_j$ to $\Gamma_i$ with probability $p$ and assign it all colours from $\cC$.
				\end{itemize}
		\end{itemize}
        We remark that all the random choices mentioned above are mutually independent.

		We claim that $\Gamma_0$ is distributed as $G_0' \cup \gnp{n}{p}$, with edges assigned all colours in $\cC$. Indeed, we have 
		\begin{equation*}
			\PP\left[{e \in E\left(G_0' \cup \gnp{n}{p}\right)}\right] = \left\{
				\begin{array}{ll}
					\mu + (1-\mu)p = \frac{\eps}{1+\eps}, & \text{if } e \in E(G_0)\\
					p, & \text{if } e \notin E(G_0)
				\end{array}
			\right.,
		\end{equation*}
		which is exactly the probability that $e \in \Gamma_0$, as claimed.
		Because all edges in $\Gamma_0$ have all colours in $\cC$, it is therefore the case that
		\begin{equation*}
			\PP\left[\text{$G_0' \cup \gnp{n}{p}$ has a copy of some $H \in \calH$}\right]
			= \PP\left[\text{$\Gamma_0$ has a rainbow copy of some $H \in \calH$}\right].
		\end{equation*}
		We also have
		\begin{equation*}
			\PP\left[
				\begin{array}{c}
					\text{$G_0 \cup \gnp{n}{q}$ has} \\ 
					\text{a rainbow copy of some $H \in \calH$}
				\end{array}
				\right]
			= \PP\left[
				\begin{array}{c}
					\text{$\Gamma_N$ has} \\ 
					\text{a rainbow copy of some $H \in \calH$}
				\end{array}
				\right],
		\end{equation*}
		since $\Gamma_N$ is distributed as $G_0 \cup \gnp{n}{q}$ with edges coloured uniformly in $\cC$.
        Therefore, in order to complete the proof, it is enough to show
		\begin{equation} \label{eqn:rainbow}
            \PP\left[
            \begin{array}{c}
            \Gamma_{i-1} \text{ contains} \\
            \text{a rainbow copy of some \(H \in \calH\)}
            \end{array} \right] \le \PP\left[
            \begin{array}{c}
            \Gamma_i \text{ contains } \\
    		\text{a rainbow copy of some $H \in \calH$}
            \end{array}
            \right] ,
		\end{equation}
        for each $i \in [N]$.
        Observe that there are three mutually exclusive scenarios:
		\begin{enumerate}[label = \rm(\alph*)]
            \item \label{mcdiamird_i} $\Gamma_{i-1}$ contains a rainbow copy of some $H \in \calH$ not using $e_i$;
            \item \label{mcdiamird_ii} $\Gamma_{i-1}$ does not contain a rainbow copy of any $H \in \calH$, not even if we add $e_i$ and assign it all colours;
            \item \label{mcdiamird_iii} $\Gamma_{i-1}$ contains a rainbow copy of some $H \in \calH$ if we add $e_i$ and assign it all colours, but does not contain a rainbow copy of some $H \in \calH$ that avoids $e_i$.
        \end{enumerate}
    
		To prove \eqref{eqn:rainbow}, it suffices to show that the inequality holds when conditioning each side on each of \ref{mcdiamird_i}, \ref{mcdiamird_ii} and \ref{mcdiamird_iii}. This holds with equality if \ref{mcdiamird_i} or \ref{mcdiamird_ii} holds: if \ref{mcdiamird_i} holds, both sides are $1$, and if \ref{mcdiamird_ii} holds, both sides are $0$.
		Now consider \ref{mcdiamird_iii}.
        \[
            \PP\left[
            \begin{array}{c}
            \Gamma_{i-1} \text{ contains} \\
            \text{a rainbow copy of some \(H \in \calH\)}
            \end{array} \bigg| \ref{mcdiamird_iii} \right]  =
            \begin{cases}
                \eps/(1+\eps), & \text{if $e_i \in E(G_0)$} \\
                p, & \text{if $e_i \not\in E(G_0)$} 
            \end{cases}\, .
    	\]
        The crucial observation is that if $e_i$ can complete a rainbow copy of some $H \in \calH$ in $\Gamma_{i-1}$ then there are at least $\eps m$ colours (out of the $(1+\eps)m$ in the palette) for $e_i$ which yields a rainbow copy of $H$ in $\Gamma_i$. 
        Therefore
        \[
            \PP\left[
            \begin{array}{c}
            \Gamma_{i} \text{ contains} \\
            \text{a rainbow copy of some \(H \in \calH\)}
            \end{array} \bigg| \ref{mcdiamird_iii} \right]  \ge
            \begin{cases}
                \eps/(1+\eps), & \text{if $e_i \in E(G_0)$} \\
                q \eps/(1+\eps) = p, & \text{if $e_i \not\in E(G_0)$} 
            \end{cases}\, .
    	\]
		Thus \eqref{eqn:rainbow} holds, completing the proof.
    \end{proof}

\section{Almost spanning rainbow trees in random graphs} 
	\label{sec:almost-spanning}

	In order to prove \Cref{thm:main}, about finding a rainbow copy of a spanning bounded-degree tree in a perturbed graph, we have to embed a rainbow copy of a fixed bounded-degree spanning tree $T$ in a uniformly coloured $G_0 \cup \gnp{n}{p}$, with colours in $[n-1]$, using both edges of $G_0$ and random edges of $\gnp{n}{p}$.
	We will do so by first embedding a certain almost-spanning subtree of $T$ in $\gnp{n}{p}$ only (in a rainbow fashion) and then completing it to a (rainbow) embedding of $T$ in the full perturbed graph $G_0 \cup \gnp{n}{p}$.
	Therefore, we first show that we can embed almost-spanning trees with bounded degree in uniformly coloured random graphs in a rainbow fashion.

	However, rather than standard random graphs, we consider random subgraphs of pseudorandom graphs, for the following reason.
	To prove \Cref{thm:main} when the tree has many bare paths (c.f.\ \Cref{thm:many_paths}), we will first build an absorbing structure, using edges of both $G_0$ and $\gnp{n}{p}$, and then embed a rainbow almost-spanning forest in the remainder, using only edges from $\gnp{n}{p}$.
	In order to guarantee that the colouring is random for the second step as well, we use the following trick.
	We partition the edges of \(K_n\) randomly into two sets.
	Then for the first step we only use the edges of $G_0 \cup \gnp{n}{p}$ which appear in the first set and, similarly, for the second step, we only use those spanned by $\gnp{n}{p}$ among the second set of edges.
	Hence, the latter is not a random subgraph of the complete graph, but a random subgraph of a graph that has pseudorandom properties with high probability.
	We will use the following definition of pseudorandomness, where the number of edges between any two not-too-small disjoint vertex sets is close the expected number of such edges in $\gnp{n}{1/2}$.

	\begin{definition}[Pseudorandom graph]\label{def:pseudorandom}
		A graph \(G\) on \(n\) vertices is \emph{pseudorandom} if for any two disjoint subsets of vertices \(A,B \) with \(\abs{A} \cdot \abs{B} \ge 250 n\), we have
		\(e(A,B) \ge \abs{A} \abs{B} /3  \).
	\end{definition}

	We can now state the main result of this section.

	\begin{theorem} 
	\label{thm:rainbow-almost-spanning-embedding}
		Let \(1/C \ll \eps, 1/d <1\) with $d \ge 2$ being a positive integer.
		Let $T$ be a tree on at most $(1 - \eps)n$ vertices with maximum degree $d$, let $G$ be a pseudorandom graph on $n$ vertices, and write $p := C/n$.
		Suppose that $G_p$ is coloured uniformly with \(n\) colours. Then, with high probability, \(G_{p}\) contains a rainbow copy of \(T\).
	\end{theorem}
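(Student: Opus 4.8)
The plan is to decouple the two difficulties hidden in the statement — producing \emph{some} copy of $T$ in the sparse host $G_p$, and arranging for that copy to be rainbow — and to treat them by adapting two existing templates to the present setting, where the host is the $p$-random subgraph of a pseudorandom graph rather than a genuine $\gnp{n}{C/n}$. Concretely, I would (i) isolate the expansion property of $G_p$ that drives bounded-degree tree embeddings and verify it from \Cref{def:pseudorandom}, thereby recovering the uncoloured embedding via the argument of Alon--Krivelevich--Sudakov \cite{AKS}, and then (ii) layer the colour-coupling machinery of Ferber--Krivelevich \cite{ferber-kriv} (the proof of \Cref{thm:ferber-kriv-conseq}) on top of that. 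The key conceptual point is that \Cref{def:pseudorandom} is precisely a hypothesis under which $G_p$, with $p=C/n$ and $C=C(\eps,d)$ large, inherits the same expansion estimates as $\gnp{n}{C/n}$, so that both templates transfer.

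For step (i): using the tree-embedding lemma for expanders in the form used in \cite{AKS}, the uncoloured statement reduces to showing that, with high probability, there is a set $W$ of at most $\eps n/3$ vertices such that $H := G_p - W$ is an expander — every $X \subseteq V(H)$ with $|X| \le \eps n/(Kd)$ satisfies $|N_H(X) \setminus X| \ge (d+2)|X|$ — and moreover every $X$ with $|X| \ge \eps n/(Kd)$ has $|N_H(X)\cup X| \ge (1-\eps/4)n$ (the latter supplying the connectivity and last-bit estimates). I would establish this for $G_p$ as follows. In the large-set regime it is clean: if $|X| \ge \eps n/(Kd)$ and $|Y| \ge \eps n/4$ are disjoint then $|X|\cdot|Y| \ge 250n$, so $e_G(X,Y) \ge |X||Y|/3 = \Omega(n^2)$ by \Cref{def:pseudorandom}, and a Chernoff bound gives $e_{G_p}(X,Y)>0$ except with probability $e^{-\Omega(Cn)}$, which survives a union bound over all such pairs once $C$ is large; hence $G_p$, and still $H = G_p - W$ for a suitable small $W$, has no large set with a large complementary non-neighbourhood. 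The small-set regime is the bulk of the work and mirrors the AKS analysis with \Cref{def:pseudorandom} replacing the corresponding property of $\gnp{n}{C/n}$: one applies \Cref{def:pseudorandom} with the second set ranging over all sets of size $\Theta(n/|X|)$ to see that every not-too-large $X$ already has a large $G$-neighbourhood, and then combines Chernoff bounds, a careful union bound, and the removal into $W$ of the (sub-linearly many) vertices lying in atypically dense neighbourhoods, to conclude that enough of that neighbourhood survives the $p$-sampling.

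For step (ii), the rainbow upgrade: I would run the coupling argument of \cite{ferber-kriv}. Split the palette $[n]$ into a bounded number $r = r(\eps,d)$ of batches; via the standard multi-round coupling, the sub-hosts formed by the edges of $G_p$ carrying colours in a fixed batch may be treated as independent $p'$-random subgraphs of $G$ with $p' = \Theta(p)$, each of which still satisfies the expansion property above since $C$ is large. One then builds the rainbow copy of $T$ in $r$ stages, at stage $j$ extending the current partial rainbow tree using only batch-$j$ edges and colours — each stage being an instance of the coloured AKS embedding from step (i) with a fresh palette — so the final copy is automatically rainbow; and since $T$ has fewer than $(1-\eps)n$ edges, at every point at least $\eps n$ colours remain unused, which supplies the slack this argument needs. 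The only property of the base graph used throughout step (ii) is the expansion of the batch-subgraphs, which is already in hand, so the argument transfers from $\gnp{n}{C/n}$ to $G_p$ essentially verbatim. The main obstacle is the small-set expansion of $G_p$ in step (i): \Cref{def:pseudorandom} is a one-sided condition that is vacuous for sets whose sizes multiply to below $250n$ and never bounds edge densities from above, so small-set expansion must be derived indirectly — from the large $G$-neighbourhoods forced by pseudorandomness, after discarding the dense-spot vertices into $W$ — and the expansion constant, the threshold $\eps n/(Kd)$, the size of $W$, and the constant $C$ all have to be chosen so that the accompanying Chernoff and union bounds close simultaneously.
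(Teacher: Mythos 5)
Your high-level decomposition is correct and matches the paper: first prove the uncoloured statement (that $G_p$ contains a copy of $T$ when $G$ is pseudorandom), then upgrade to a rainbow copy using the Ferber--Krivelevich machinery. Your step (i) is essentially identical to the paper's \Cref{prop:sample-pseudorandom-tree} and \Cref{lemma:similar-to-AKS}: one verifies that $G_p$ typically has a large subgraph satisfying the degree bounds and local-expansion hypotheses of \Cref{thm:AKS_expander_contains_trees}, with \Cref{def:pseudorandom} plus Chernoff replacing the corresponding density calculations for $\gnp{n}{p}$. The observation that the pseudorandomness condition is one-sided and vacuous for small products $|A||B|$, so small-set expansion must be derived indirectly after discarding atypical vertices, is exactly right and is the content of the appendix.

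Step (ii) is where you go astray. You announce that you will ``run the coupling argument of \cite{ferber-kriv}'' but then describe something quite different: splitting the palette into $r$ batches, treating the batch-subgraphs as independent sprinklings, and \emph{extending a partial rainbow tree embedding round by round, one batch at a time}. This is not the Ferber--Krivelevich coupling. Their Theorem~1.2 (stated here as \Cref{thm:ferber-kriv-conseq}) is a one-shot McDiarmid-style comparison: it directly bounds $\PP[\gnp{n}{p}\supseteq\text{some }H\in\calH]$ by $\PP[\text{coloured }\gnp{n}{q}\text{ has a rainbow }H\in\calH]$, with $q=\eps^{-1}p$, and the paper applies it as a black box. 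The only subtlety the paper has to handle is that the host is $G_p$, not $\gnp{n}{p}$; the fix is to take $\calH$ to be the collection of copies of $T$ \emph{inside $G$}, couple $G_{p_0}\subseteq\gnp{n}{p_0}$ edge-by-edge (and similarly $G_p$ with $\gnp{n}{q}$), and observe that any rainbow $H\in\calH$ found in the coloured $\gnp{n}{q}$ is automatically inside $G_p$, since all its edges lie in $G$.

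Your multi-batch replacement, as written, has real gaps. First, the AKS expander lemma (\Cref{thm:AKS_expander_contains_trees}) gives a copy of an entire tree in a suitable expander; it is not a local extension lemma, so ``at stage $j$ extending the current partial rainbow tree'' is not an instance of it. You would need to reprove a partial-embedding-extension version, keep track of the shrinking host after removing used vertices, and control the boundary where the new subtree must attach, none of which you address. Second, the batches are disjoint slices of one colouring, so they are not independent; you can reveal them sequentially, but the claim ``may be treated as independent $p'$-random subgraphs'' needs to be made precise and is not free. Third, within a single batch you still need the $\approx n/r$ edges embedded in that round to receive distinct colours from a palette of size $\approx n/r$; this is tight, not slack, and the slack you invoke ($\eps n$ global leftover colours) does not automatically distribute itself across batches. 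In short, the simpler and correct step (ii) is to use \Cref{thm:ferber-kriv-conseq} exactly as stated together with the $\calH\subseteq G$ coupling trick; the multi-batch route would be a much longer and technically different argument and, as sketched, is not sound.
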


	Observe that \Cref{thm:intro-rainbow-almost-spanning-embedding} is now a simple corollary of \Cref{thm:rainbow-almost-spanning-embedding}, since the complete graph on \(n\) vertices is a pseudorandom graph.
	In order to prove \Cref{thm:rainbow-almost-spanning-embedding}, we first prove an uncoloured version of it.
	\begin{proposition} \label{prop:sample-pseudorandom-tree}
		Let \(1/C \ll \eps, 1/d<1\) with $d \ge 2$ being a positive integer.
		Set \(p:=C/n\) and let \(G\) be a pseudorandom graph on \(n\) vertices.
		Then, with high probability, \(G_p\) contains a copy of every tree with at most $(1-\eps)n$ vertices and maximum degree at most $d$.
	\end{proposition}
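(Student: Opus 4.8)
The plan is to follow the Alon--Krivelevich--Sudakov strategy for embedding almost-spanning bounded-degree trees in sparse (pseudo)random graphs. That strategy only consumes a short list of expansion-type properties, so the proof divides into a probabilistic part --- showing that $G_p$ inherits these properties from the pseudorandomness of $G$ --- and an essentially known deterministic embedding part.

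\textbf{Probabilistic part.} I would first show that, for $C=C(\eps,d)$ large enough, with high probability $G_p$ satisfies the following, where $\eta=\eta(C)\to 0$ and $\Psi=\Psi(C)\to\infty$ as $C\to\infty$:
\begin{enumerate}[label=(\roman*)]
    \item (near-domination) every $W\subseteq V(G)$ with $|W|\ge \eta n$ has $|N_{G_p}(W)|\ge(1-\eps/8)n$;
    \item (medium expansion) every $W$ with $n/\Psi\le|W|\le\eta n$ has $|N_{G_p}(W)|\ge(d+1)|W|$;
    \item (few low-degree vertices) all but at most $\eps n/8$ vertices $v$ have $\deg_{G_p}(v)\ge K_0$, for a large constant $K_0=K_0(\eps,d)$;
    \item (local sparsity) every set of $k\le n/\Psi$ vertices spans fewer than $(1+\tfrac{1}{10d})k$ edges of $G_p$.
\end{enumerate}
For (i)--(ii): if $|W|=w$ then, by the definition of pseudorandomness of $G$, all but $O(n/w)$ vertices of $G$ have at least $w/4$ neighbours in $W$ (else $W$ together with the set of ``few-neighbour'' vertices would be a too-edge-sparse pair); each such vertex fails to lie in $N_{G_p}(W)$ only with probability $(1-p)^{w/4}=e^{-\Omega(Cw/n)}$, and these events are independent across vertices, so a Chernoff bound together with a union bound over the $\binom nw$ choices of $W$ gives (i)--(ii), the ranges being dictated by what the union bound can afford once $C$ is large. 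For (iii): pseudorandomness forces all but $O(\eps n)$ vertices of $G$ to have $G$-degree at least $\theta n$ for a small $\theta=\theta(\eps)$, and such a vertex has $\deg_{G_p}(v)\sim\mathrm{Bin}(m,p)$ with $m\ge\theta n$, hence $\ge K_0$ except with probability $\le\eps/16$ once $C\gg K_0/\theta$; the number of exceptions is concentrated by bounded differences. Property (iv) is the standard first-moment bound, valid since $G_p$ is a subgraph of $\gnp{n}{C/n}$ with each edge present with probability at most $p$.

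\textbf{Embedding part.} From (iii), (iv), (i) one shows that the $(d+6)$-core $V_0$ of $G_p$ has $|V_0|\ge(1-\eps/2)n$ (peeling argument: a removed vertex either is one of the $\le\eps n/8$ vertices of (iii), or has most of its $G_p$-neighbourhood removed before it, and by (iv) and (i) this cannot cascade across a linear set). Moreover, (iv) and the minimum degree $d+6$ of $G_p[V_0]$ give that $G_p[V_0]$ has $(d+1)$-expansion for every set of size $\le n/\Psi$: if $W\subseteq V_0$ with $|W|\le n/\Psi$ had $|N_{G_p}(W)\setminus W|<(d+1)|W|$, then $W\cup N_{G_p}(W)$ would have at most $(d+2)|W|\le n/\Psi$ vertices but, by the degree bound, at least $(d+3)|W|$ incident edges --- contradicting (iv). Combining this with (ii), $G_p[V_0]$ has $(d+1)$-expansion for all sets of size $\le\eta n$, and by (i) large sets still near-dominate. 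One then embeds $T$ into $G_p[V_0]$, which has room since $|V_0|-|V(T)|\ge\eps n/2$, by the AKS procedure: either one invokes a suitable graph-theoretic version of \cite[Theorem~1.1]{AKS}, or one re-runs its proof --- a Friedman--Pippenger/Haxell-type argument steered by the $(d+1)$-expansion, the near-domination of large sets, and the linear slack.

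\textbf{The main obstacle.} Everything hinges on $G_p$ being \emph{sparse}: it has $\Theta(n)$ vertices of bounded degree, so there is no $(d+1)$-expansion of all small sets in $G_p$, and no hope of growing the tree one vertex at a time. Passing to the $(d+6)$-core restores small-set expansion cheaply, but the genuine difficulty is that $T$ has up to $(1-\eps)n$ vertices --- far more than the expansion window $\eta n$ --- so it cannot be placed by a single Friedman--Pippenger call; making the AKS machinery (decompose $T$, embed the pieces while maintaining expansion and domination, glue with matchings, exploit the slack) go through is where essentially all the work lies. The only other technical point is ordering the constants correctly: pick $K_0$, then $\theta$, then $\Psi$ and $\eta$ (through a large $C$), so that (i)--(iv) hold simultaneously with high probability.
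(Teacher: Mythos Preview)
Your plan diverges from the paper's in one key respect. The paper does not pass to a bounded-degree core; instead it extracts from $G_p$ a near-spanning subgraph $G^*$ that is \emph{almost regular}, with all degrees between $D$ and $25D$ for $D=C/10$. This is done by first deleting the $\theta n/2$ vertices of highest $G_p$-degree (a pseudorandomness-based edge count shows the remaining maximum degree is at most $25D$), and then greedily deleting vertices of current degree below $D$ (a lower bound $e_{G_p}(Y,V'\setminus Y)\ge|Y||V'\setminus Y|p/4$ for linear $Y$, again from pseudorandomness, caps this second deletion at $\theta n/2$). With both a min and a max degree of order $C$, the paper then applies Theorem~1.4 of \cite{AKS} as a black box: it checks the near-regularity condition $\Delta^2\le K^{-1}\exp(\delta/(8K)-1)$ and the required expansion of high-minimum-degree induced subgraphs of $G^*$, and is done. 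No Friedman--Pippenger argument is re-run.

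Your route omits the upper degree bound, and this is where the trouble is. First, the black-box AKS theorem is unavailable --- its Condition~1 is exactly a two-sided degree hypothesis, and the maximum degree enters its proof in a random-partition concentration step --- so the embedding step you defer to ``re-running the proof'' or a ``Haxell-type argument'' genuinely has to be carried out, and it is not clear your properties suffice (Haxell needs expansion of sets up to size $2|T|$, which is impossible for almost-spanning $T$). Second, your peeling claim $|V_0|\ge(1-\eps/2)n$ is not supported by (i)--(iv): property (iv) only controls sets of size $\le n/\Psi\ll\eps n$, and (i) gives a large neighbourhood rather than the edge-count lower bound one needs to stop the cascade at a linear set. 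The clean fix for both issues is the paper's: add the edge-count bound $e_{G_p}(A,B)\ge|A||B|p/4$ (valid whenever $|A||B|p\ge 250n$) to your list, use it to prune high-degree vertices as well and obtain near-regularity, then cite \cite{AKS} directly.
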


	This is a generalisation of a result due to Alon, Krivelevich and Sudakov~\cite[Theorem 1.1]{AKS} who already proved that the same conclusion holds when $G$ is the complete graph on $n$ vertices.
	Their result relies on the fact that sparse and almost-regular `robust expanders' contain a copy of every almost-spanning bounded-degree tree (c.f.~\Cref{thm:AKS_expander_contains_trees}).
	We will employ the same approach and prove \Cref{prop:sample-pseudorandom-tree} using essentially the same arguments, but with minor differences in calculations.
	For the sake of completeness, we give a proof in \Cref{appendix}.

	The next result allows to translate \Cref{prop:sample-pseudorandom-tree} into \Cref{thm:rainbow-almost-spanning-embedding}, and is a simple consequence of a general result of Ferber and Krivelevich~\cite[Theorem 1.2]{ferber-kriv} for binomial random subgraphs of uniformly edge-coloured hypergraphs. 

	\begin{theorem}[Consequence of~{\cite[Theorem 1.2]{ferber-kriv}}] \label{thm:ferber-kriv-conseq}
		Let $\eps, p \in (0,1)$ and set $q := \eps^{-1}p$.
		Suppose that \(\calH\) is a collection of subgraphs of \(K_n\) with at most 
		\((1-\eps)n\) edges.
		Then 
		\[
			\PP\left[
			\begin{array}{c}
			\text{\(\gnp{n}{p}\) contains} \\
			\text{some \(H \in \calH\)}
			\end{array} \right] \le \PP\left[
			\begin{array}{c}
			\text{a uniformly edge-coloured \(\gnp{n}{q}\),} \\
			\text{with colours in \([n]\),
			contains a rainbow \(H \in \calH\)}
			\end{array}
			\right] .
		\]
	\end{theorem}

	It is now easy to prove \Cref{thm:rainbow-almost-spanning-embedding}.

	\begin{proof}[Proof of \Cref{thm:rainbow-almost-spanning-embedding}]
		Let $T$ be a tree on at most $(1-\eps)n$ vertices with maximum degree $d$ and $G$ be an $n$-vertex pseudorandom graph on $V$.
		Let \(C_0\) be given by \Cref{prop:sample-pseudorandom-tree} on input $\eps$ and $d$, and set $p_0:=C_0/n$.
		Let \(\calH\) be the collection of labelled copies of \(T\) in \(G\). 

		Observe that $G_{p_0}$ contains a copy of $T$ with high probability, by \Cref{prop:sample-pseudorandom-tree}.
		Let \(\gnp{n}{p_0}\) be the binomial random graph on $V$, coupled with \(G_{p_0}\) so that \(G_{p_0} \subseteq \gnp{n}{p_0}\).
		Then it follows that with high probability 
		\(\gnp{n}{p_0}\) contains a graph in \(\calH\).

		Set \(C := \eps^{-1} C_0\) and \(p:=C/n\).
		Let \(\gnp{n}{p}\) be the binomial random graph on $V$, coupled with \(G_p\) so that an edge $e$ in $G$ is in $G_p$ if and only if it is in $\gnp{n}{p}$, and colour each of its edges uniformly in \([n]\).
		Then \Cref{thm:ferber-kriv-conseq} implies that with high probability \(\gnp{n}{p}\) contains a rainbow \(H\in \calH\) and, because \(\calH\) is a collection of subgraphs of \(G\) and by the coupling, it follows that so does \(G_p\).
		That is, with high probability, \(G_p\) contains a rainbow copy of \(T\).
	\end{proof}

\section{Overview of \Cref{thm:main}}  \label{sec:overview}
	
	Let $G_0$ be a graph on vertex set $[n]$ with minimum degree at least $\delta n$.
    Let \(G\sim G_0 \cup \gnp{n}{C/n}\) and suppose \(G\) is uniformly coloured in \([n-1]\).
    Let \(T\) be an $n$-vertex tree with maximum degree at most \(d\) that we wish to embed in a rainbow fashion in \(G\).
    To aid our embedding, we seek simple structures in $T$, for which we use the following observation of Krivelevich~\cite{kriv-trees}, where we recall that a \emph{bare path} in $T$ is a path whose interior vertices have degree $2$ in $T$.
    \begin{lemma}[{\cite[Lemma 2.1]{kriv-trees}}]
    \label{lemma:leaves_or_paths}
        For any integers $n, k > 2$, a tree with $n$ vertices either has at least $n/4k$ leaves or a collection of at least $n/4k$ vertex-disjoint bare paths, each of length $k$.
    \end{lemma}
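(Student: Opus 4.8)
The plan is to prove the contrapositive: suppose $T$ has fewer than $n/4k$ leaves, and show that $T$ then contains at least $n/4k$ vertex-disjoint bare paths, each of length $k$. First I would classify the vertices of $T$ by degree. Let $L$ be the set of leaves, $B$ the set of vertices of degree exactly $2$ (the "bare" vertices), and $D$ the set of vertices of degree at least $3$ (the branch vertices). Since $T$ is a tree on $n$ vertices, $\sum_{v} \deg(v) = 2(n-1)$, which combined with $|L| + |B| + |D| = n$ gives a standard bound $|D| \le |L| - 2 < |L|$; in particular $|L| + |D| < 2|L| < n/2k$. Hence $|B| = n - |L| - |D| > n - n/2k > n/2$, so the vast majority of vertices lie on maximal bare "segments."

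Next I would consider the graph structure obtained by looking at maximal paths through $B$-vertices. Delete all vertices of $L \cup D$ from $T$; what remains is a disjoint union of paths, each consisting entirely of degree-$2$ vertices of $T$, and the number of such paths is at most the number of edges incident to $L \cup D$ in $T$, which is $O(|L| + |D|)$ — more precisely at most $|L| + \sum_{v \in D}\deg(v)$, and using the degree-sum identity this is at most roughly $2|L| + 2|D| < n/k$. Wait — I should be a little more careful here; the cleaner route is: the forest $T - (L \cup D)$ has at most $|L| + |D| - 1 < n/2k$ components (each component is a subtree/path, and merging them back corresponds to edges at branch/leaf vertices). Each such component is a path on some number $s_i$ of vertices with $\sum_i s_i = |B| > n/2$. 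A path on $s_i$ vertices contains $\lfloor s_i / k \rfloor$ vertex-disjoint bare subpaths of length $k$ (here "length" counts edges, so a bare path of length $k$ uses $k+1$ vertices, but since its interior vertices all have degree $2$ in $T$, requiring $k-1$ interior vertices of degree $2$ means we can carve $\lfloor (s_i)/(k) \rfloor$ such paths comfortably; I would fix the exact off-by-one with $\lfloor s_i/k\rfloor$ or $\lceil (s_i - k)/k \rceil$ as the calculation dictates).

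Summing over components, the number of disjoint bare paths of length $k$ we obtain is at least $\sum_i \lfloor s_i / k \rfloor \ge \sum_i (s_i/k - 1) = |B|/k - (\text{number of components}) > (n/2)/k - n/2k \ge 0$, which is too weak as written — so the key quantitative step is to be less wasteful. The right bound to push through: the number of components is at most $|L| + |D| < n/2k$, and $|B| > n(1 - 1/2k) > n/2$ for $k \ge 1$, but actually we want $|B|/k$ minus the component count to be at least $n/4k$; since $|B|/k \ge (n - n/2k)/k$ and the component count is less than $n/2k$, this does not immediately close. The honest fix is to note $|L| + |D| \le 2|L| - 2 < 2 \cdot (n/4k) = n/2k$ was the assumption, and the number of maximal degree-$2$ paths is at most the number of edges leaving $D \cup L$, bounded by $\sum_{v\in D}\deg v + |L| \le (2(n-1) - |L| - 2|B|) + |L| = 2(n-1) - 2|B| = 2(|L| + |D|) - 2 < n/k$; combined with $\sum s_i = |B| \ge n - n/2k$, we get at least $|B|/k - n/k \ge \ldots$ — still the constants fight. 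I expect the main obstacle to be exactly this constant-chasing: getting $n/4k$ (rather than some smaller constant times $n/k$) requires either (i) a sharper count of maximal bare segments — ideally noting that a leaf contributes only one incident edge and cannot start a bare path of positive length the same way a branch vertex does — or (ii) choosing the bare paths greedily from longest to shortest and using convexity of $s \mapsto \lfloor s/k\rfloor$. I would write the argument so that the final tally is $\sum_i \lfloor s_i/k \rfloor \ge (\,|B| - k\cdot\#\text{components}\,)/k$ and then verify, using $|B| > n - n/(2k)$ and $\#\text{components} < n/(2k)$ together with $k \ge 3$, that this exceeds $n/4k$; if the bookkeeping forces a worse constant I would instead invoke that leaves have degree $1$ so each bare segment has two endpoints in $D$ (except pendant ones), halving the segment count. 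Everything else is routine.
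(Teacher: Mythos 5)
The paper does not prove this lemma; it cites it from Krivelevich, so there is no in-paper argument to compare against. Your high-level plan (bound $|D|$ by $|L|$, so most vertices have degree $2$; decompose $T - (L\cup D)$ into few degree-$2$ segments; carve bare paths of length $k$ from the segments) is the standard and correct one. However, as you yourself concede, the write-up does not close — you end with ``the constants fight'' and list two candidate fixes without verifying either — so this is a genuine gap.

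The gap is precisely the quantitative bookkeeping, and it can be closed without either of the fixes you propose. Two corrections are needed. First, a bare path of length $k$ has $k+1$ vertices, so from a degree-$2$ segment on $s_i$ vertices you obtain $\lfloor s_i/(k+1)\rfloor$ vertex-disjoint bare paths of length $k$, \emph{not} $\lfloor s_i/k\rfloor$. Second, the estimate $\lfloor x\rfloor \ge x-1$ applied termwise is what makes the constants fight; use instead $\lfloor s_i/(k+1)\rfloor \ge (s_i-k)/(k+1)$ (the fractional part of $s_i/(k+1)$ is at most $k/(k+1)$). Also, keep your first bound $m \le |L|+|D|-1$ on the number of segments — the bound $\sum_{v\in D}\deg v + |L|$ you switch to later is off by a factor of $2$ and is what sinks your second attempt. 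Writing $a := |L|+|D| < n/(2k)$, so $|B| = n-a$ and $m\le a-1$, one gets
\[
\sum_i \left\lfloor \frac{s_i}{k+1}\right\rfloor \;\ge\; \frac{|B|-mk}{k+1} \;\ge\; \frac{n - a(k+1)+k}{k+1} \;>\; \frac{n}{k+1} - \frac{n}{2k} \;=\; \frac{n(k-1)}{2k(k+1)},
\]
and $\frac{k-1}{2k(k+1)} \ge \frac{1}{4k}$ is equivalent to $k\ge 3$, i.e.\ exactly the hypothesis $k>2$. Neither the ``sharper segment count via pendant edges'' nor the ``greedy/convexity'' idea is required; the three corrections above are what your argument was missing.
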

    Our proof splits into two cases, according to the structure of the tree $T$.
    \begin{theorem}[Trees with many leaves]
    \label{thm:many_leaves}
        Let $1/C \ll \zeta, \delta, 1/d<1$ with $d \ge 2$ an integer and $\delta \in (0,1)$.
        Let $T$ be a tree on $n$ vertices with maximum degree at most $d$, containing at least $\zeta n$ leaves.
        Let $G_0$ be an $n$-vertex graph with minimum degree at least $\delta n$, and suppose that \(G \sim G_0\cup \gnp{n}{C/n}\) is uniformly coloured in \([n-1]\). 
        Then,
        with high probability, \(G\) contains a rainbow copy of \(T\).
    \end{theorem}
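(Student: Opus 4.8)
The plan is to remove a linear‑sized set of leaves, embed the rest of the tree almost‑spanningly and rainbow‑ly via \Cref{thm:intro-rainbow-almost-spanning-embedding}, and then attach the removed leaves through a rainbow perfect matching that is forced to use exactly the colours left unused.

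Let $\eps>0$ be a small constant with $2\eps\le\zeta$ and $4\eps\le\delta$, so that $1/C\ll\eps$. Since $T$ has at least $\zeta n\ge 2\eps n$ leaves, pick a set $L$ of $2\eps n$ of them, let $T_1:=T-L$ (a tree on $(1-2\eps)n$ vertices with $\Delta(T_1)\le d$), and for $\ell\in L$ write $p(\ell)$ for the neighbour of $\ell$ in $T$. As $T_1$ has at most $(1-2\eps)n$ vertices, \Cref{thm:intro-rainbow-almost-spanning-embedding} applied with slack $2\eps$ (with the palette $[n-1]$ in place of $[n]$, which is immaterial in the argument via \Cref{thm:ferber-kriv-conseq}) shows that w.h.p.\ $\gnp{n}{C/n}$ contains a rainbow copy of $T_1$. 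The point where care is needed is that we want the leftover vertices to behave randomly with respect to $G_0$; so I would expose $\gnp{n}{C/n}$ together with its colouring on an abstract vertex set, fix there a rainbow copy of $T_1$, and only then push everything forward to $V=[n]$ by a uniformly random bijection $\sigma$ (legitimate since the law of $\gnp{n}{C/n}$ with a uniform colouring is $\sigma$-invariant, and $\sigma$ is independent of $G_0$). Writing $\phi$ for the resulting embedding, the \emph{free set} $W:=V\setminus\phi(V(T_1))$ is then a uniformly random $2\eps n$-subset of $V$ independent of $G_0$, the set $Q:=\phi(\{p(\ell):\ell\in L\})$ of parent-images is uniformly random in $V\setminus W$ independent of $G_0$, the \emph{free palette} $\cols_0:=[n-1]\setminus\cols(\phi(T_1))$ has exactly $|\cols_0|=(n-1)-((1-2\eps)n-1)=2\eps n=|L|$ colours, and no edge of $G$ between $\phi(V(T_1))$ and $W$ has been used.

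It now suffices to find an injection $\psi:L\to W$ such that each edge $\phi(p(\ell))\psi(\ell)$ lies in $G$ and these edges receive \emph{distinct} colours, all in $\cols_0$ — because $|L|=|W|=|\cols_0|$, such a $\psi$ uses every free colour exactly once, so $\phi$ and $\psi$ together form a rainbow copy of $T$. Equivalently, letting $B$ be the bipartite graph on parts $L$ and $W$ in which $\ell\sim v$ whenever $\phi(p(\ell))v\in E(G)$ and $\cols(\phi(p(\ell))v)\in\cols_0$, we need a rainbow perfect matching of $B$. The two features that make this plausible are: (i) since $W$ and $Q$ are random and independent of $G_0$, every parent-image has $\ge(\delta/2)|W|$ neighbours of $G_0$ in $W$, and symmetrically every $v\in W$ has a positive proportion of $W$ as $G_0$-neighbours among the parent-images (standard concentration, union bound over $\le n$ vertices), which together with the uniform colouring gives $B$ linear minimum degree on both sides; and (ii) every parent-image has $\Theta(C)$ neighbours in $\gnp{n}{C/n}$, which for $C$ large supplies the extra expansion needed to upgrade the minimum-degree condition to a robust Hall-type condition, so that $B$ has a perfect matching.

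The main obstacle is making this matching \emph{rainbow}. There is no colour slack at all: $T$ is spanning and we use exactly $n-1$ colours, so the $|L|$ leaf-edges must use precisely the $|L|$ colours of $\cols_0$, each once; in particular a naive colour-by-colour or leaf-by-leaf greedy fails, since a single leaf has only a $\Theta(\delta)$-proportion of $W$ available in $B$ and can be blocked once a $\Theta(\delta)$-proportion of the colours (or of $W$) has been consumed. I would get around this by reserving, before everything else and using edges of both $G_0$ and $\gnp{n}{C/n}$, a small flexible ``absorbing'' gadget on a random sub-collection of the leaves together with their parent-images and a reserved random slice of vertices and colours, designed so that it can complete the matching for an arbitrary $o(n)$-sized set of leftover leaves; a random-greedy argument then matches all but $o(n)$ of the leaves with distinct $\cols_0$-colours, and the absorber mops up the rest. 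The remaining work — checking that the absorber, the greedy stage and the $T_1$-embedding use pairwise disjoint sets of vertices and colours, and that on the edges relevant to the absorber and the greedy stage the colouring is still uniform (which is exactly why it pays to split the random edges and expose the colouring in stages, as in \Cref{sec:almost-spanning}) — is routine but is the delicate part; everything else is a black-box application of \Cref{thm:intro-rainbow-almost-spanning-embedding} together with standard concentration.
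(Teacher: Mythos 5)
Your first half coincides with the paper's: delete a linear set of leaves with distinct parents, use \Cref{thm:intro-rainbow-almost-spanning-embedding} to embed the remaining almost-spanning tree rainbow, and reduce the problem to a rainbow perfect matching between the parent-images and the leftover vertices that uses exactly the free palette. The divergence is in how that matching is produced. The paper's trick is to observe that an exact rainbow perfect matching in a bipartite perturbed graph is precisely what a rainbow directed Hamilton cycle delivers after a change of variables: identify both sides of the bipartition with $[m]$ via bijections $\sigma_1,\sigma_2$, turn the coloured bipartite graph into a coloured digraph on $[m]$ by declaring $xy$ an arc iff $\sigma_1(x)\sigma_2(y)$ is an edge, check that the digraph is a perturbed digraph with linear minimum semi-degree and a uniform colouring in a palette of size $m$, and then invoke \Cref{thm:directed_ham_cycle_perturbed_graph} from \cite{KLS} as a black box; a rainbow Hamilton cycle $(x_1,\dots,x_m)$ maps to the rainbow matching $\{\sigma_1(x_i)\sigma_2(x_{i+1})\}$. (The paper also first does a tiny greedy matching to shrink the parent-image side to the same size as a pre-reserved random set $R$.) You instead propose to build a bespoke absorber for the exact matching and finish with random-greedy plus absorption. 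This is viable in spirit --- it is essentially what Section 6 does for the bare-paths case --- but it is nowhere near ``routine'': you give no gadget design, no analysis of the random-greedy phase, and, most importantly, no argument for why absorption can repair an $o(n)$ leftover under a \emph{zero-slack} colour constraint, where the whole difficulty lives. You would essentially be re-developing a large part of the rainbow Hamilton cycle machinery that the paper imports for free.

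There is also a genuine gap in your freshness bookkeeping. Your ``expose $\gnp{n}{C/n}$ and its colouring on an abstract vertex set, then push forward by a random bijection $\sigma$'' move correctly makes $W$ and the parent-images $Q$ uniformly random relative to $G_0$, but it also reveals the colour of \emph{every} edge of $\gnp{n}{C/n}$, so the colours on random edges between $Q$ and $W$ are no longer uniform conditional on the event you have used (existence of a rainbow $T_1$). Yet your matching argument explicitly leans on those random edges (``supplies the extra expansion''), and so would the absorber. The paper instead reserves a random set $R$ \emph{before} embedding $T'$, embeds $T'$ only inside $\rg_1[V\setminus R]$ (and only using a restricted colour class), so that the $\rg_1$-edges touching $R$ and their colours remain unrevealed; it then carefully tracks which edges and colours are still fresh. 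You should adopt a similar explicit reservation; the push-forward by $\sigma$ alone does not do this job.
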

    \begin{theorem}[Trees with many bare paths]
    \label{thm:many_paths}
        Let $1/C \ll \zeta \ll \delta, 1/d<1$ with $d \ge 2$ and  $\zeta n / 24$ being integers and $\delta \in (0,1)$.
        Let $T$ be a tree on $n$ vertices with maximum degree at most $d$, containing at least $\zeta n/24$ bare paths, each with length $6/\zeta$.
        Let $G_0$ be an $n$-vertex graph with minimum degree at least $\delta n$, and suppose that
        \(G \sim G_0\cup \gnp{n}{C/n}\) is uniformly coloured in \([n-1]\). 
        Then,
        with high probability, \(G\) contains a rainbow copy of \(T\).
    \end{theorem}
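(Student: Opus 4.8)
The plan is to combine an absorbing argument with the almost-spanning rainbow embedding tool of \Cref{thm:rainbow-almost-spanning-embedding}. Since $T$ has at least $\zeta n/24$ vertex-disjoint bare paths each of length $6/\zeta$, we first carve up $T$ into a small ``core'' tree $T'$ obtained by deleting the interiors of (most of) these bare paths, leaving behind a collection of $\Theta(\zeta n)$ edges that will be used to ``stitch'' absorbing structure in. Concretely, I would designate a linear number of these bare paths as \emph{flexible}: each contributes a short segment whose image in $G$ we will be free to reroute through a small number of unused vertices. The remaining vertices of $T$ (the vast majority) form an almost-spanning forest, in fact we can arrange for $T$ minus the absorbing bits to be a tree on at most $(1-\eps)n$ vertices for a suitable $\eps = \eps(\zeta) > 0$.

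The first step is to build the absorbing structure using edges of \emph{both} $G_0$ and $\gnp{n}{C/n}$. For each flexible bare path, the minimum degree condition $\delta(G_0) \ge \delta n$ together with a sprinkling of random edges lets us find, for each of a linear number of ``reservoir'' vertices, many short paths in $G$ connecting two designated endpoints and passing through that vertex; this is the standard absorber-path construction. The key point where the present setting differs from the uncoloured one is that we need these absorbing paths to be rainbow \emph{and} to avoid a prescribed set of forbidden colours (those eventually used elsewhere); since each absorbing path is short (length $O(1/\zeta)$) and there are $n-1$ colours available, a union bound over colours combined with the fact that $G_0 \cup \gnp{n}{C/n}$ is locally rich makes this routine. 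As flagged in the overview, to keep the colouring genuinely uniform on the part of the graph used for the almost-spanning embedding, I would use the edge-partition trick: randomly split $E(K_n)$ into $E_1 \cup E_2$, build the absorbing structure inside $(G_0 \cup \gnp{n}{C/n}) \cap E_1$, and reserve $\gnp{n}{C/n} \cap E_2$ for the second step. By \Cref{def:pseudorandom}, $(G_0 \cup K_n) \cap E_2$ restricted to the relevant vertex set is pseudorandom with high probability, so $\gnp{n}{C/n} \cap E_2$ is exactly a $p$-random subgraph of a pseudorandom graph, which is what \Cref{thm:rainbow-almost-spanning-embedding} needs.

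The second step is then: reserve a set $\colsabs \subseteq [n-1]$ of colours for the absorbing structure (of linear size, but a small constant fraction of all colours), fix a rainbow realisation of the absorber, and embed the almost-spanning forest $T'$ rainbow-ly into the remaining $\gnp{n}{C/n} \cap E_2$ using only colours in $[n-1] \setminus \colsabs$. For this we apply \Cref{thm:rainbow-almost-spanning-embedding} (with a slightly shrunken colour set, which is harmless since we only need $(1-\eps')n$ colours for a forest on $(1-\eps)n$ vertices, and a forest is handled component-by-component or by first joining it into a single tree). After this, each flexible bare path of $T$ has both its endpoints embedded; we close it up by routing through exactly the right number of so-far-unused vertices using the prepared absorbing paths, choosing for each a rainbow colouring disjoint from all colours used so far — possible because at each such step only $O(1/\zeta)$ new colours are needed and a positive fraction of $\colsabs$ remains. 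Finally one checks the colour budget balances exactly: $T$ has $n-1$ edges, we have $n-1$ colours, and the construction uses each colour at most once by design, so the result is a rainbow copy of $T$ spanning $[n]$.

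The main obstacle I expect is the simultaneous management of \emph{colours} and \emph{vertices} in the absorption phase: unlike a standard (uncoloured) absorber, where one only needs each reservoir vertex to have many candidate absorbing paths, here each candidate path must be rainbow and must avoid colours committed elsewhere, and these constraints are revealed adaptively as the embedding proceeds. Making this work requires setting up the absorber so that it has enough ``colour slack'' — e.g.\ each reservoir vertex should lie on $\Omega(n)$ internally-disjoint short paths between its two anchors, of which an $\Omega(1)$ fraction survives after any bounded number of colour deletions — and verifying this is where the minimum-degree hypothesis, the value of $C$, and a careful union bound all come together. The rest (the partition trick, the pseudorandomness check, and the invocation of \Cref{thm:rainbow-almost-spanning-embedding}) is comparatively mechanical.
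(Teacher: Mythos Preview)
Your high-level architecture (edge-partition trick, build absorber in one half, embed the almost-spanning forest rainbow-ly in the pseudorandom other half via \Cref{thm:rainbow-almost-spanning-embedding}, then absorb) matches the paper. The gap is in the absorption step itself.

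You describe a standard \emph{vertex} absorber: each reservoir vertex lies on many short paths between two anchors, and at the end you pick one whose colours avoid those already used. But the colours on those pre-built paths are \emph{fixed} by the random edge-colouring; you do not get to ``choose a rainbow colouring'' for them. After the forest is embedded, a specific set $\cC'$ of leftover colours remains, and the paths you route through the leftover vertices must use \emph{exactly} $\cC'$ --- your robustness condition (``an $\Omega(1)$ fraction survives colour deletions'') only guarantees you can \emph{avoid} used colours, not that you can \emph{hit} a prescribed unused set. The final counting observation (rainbow on $n-1$ edges with $n-1$ colours forces all colours to appear) is correct but circular: it assumes you have already produced a rainbow spanning copy, which is precisely what is at stake.

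The paper handles this with two pieces of machinery absent from your sketch. First, it uses $(v,c)$-\emph{gadgets} (\Cref{def:gadget}, \Cref{lem:gadget_exists}): small edge-coloured subgraphs admitting two rainbow paths with common endpoints, one using the vertex $v$ and the colour $c$, the other avoiding both --- so a single gadget can absorb a vertex \emph{and} a colour simultaneously. Second, it uses Montgomery's template graph (\Cref{lem:template}) to decide \emph{which} $(v,c)$-pairs to build gadgets for, so that whatever small leftover sets $V',\cC'$ appear after the forest embedding, a perfect matching in the template selects exactly the gadgets to switch to their absorbing path. A flexible-sets lemma (\Cref{lem:flexible_sets}) first funnels the raw leftovers into the designated flexible sets via length-$7$ paths that each hit one prescribed leftover colour; only then is the template/gadget machinery activated, with exact length arithmetic ($\ell=962k+3$, $\ell'=28$) making the bare-path lengths balance. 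Without this simultaneous vertex-and-colour absorption, the exact-colour-count conclusion does not follow from your outline.
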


    \Cref{thm:main} easily follows by combining \Cref{lemma:leaves_or_paths} and \Cref{thm:many_leaves,thm:many_paths}.
    For both the two theorems above, we employ the following strategy.
    We remove the paths or leaves from $T$ and embed the remaining almost-spanning forest in a rainbow fashion using \Cref{thm:rainbow-almost-spanning-embedding}.
    The challenge is then to embed the deleted paths or leaves covering exactly the remaining vertices and using exactly the remaining colours.
    We discuss this informally now, mentioning all auxiliary lemmas that we will need, and postpone the precise proofs to subsequent sections.
    Besides the lemmas below, the only other tool we shall need is Chernoff's bound.

    \begin{lemma}[{Chernoff Bound,~\cite[Theorem 2.8]{jlr}}]
		Let \(X\) be the sum of mutually independent indicator random variables.
        Then for any $\delta \in (0,1)$ we have
		\[
		      \PP\Big[\big|X- \EE[X]\big| \ge \delta \cdot \EE[X]\Big] \le 2\exp\left(-\frac{\delta^2}{3} \cdot \EE[X]\right)\, .
		\]
	\end{lemma}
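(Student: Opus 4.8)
The plan is to prove this by the standard exponential-moment (Bernstein--Chernoff) method, treating the upper and lower tails separately and combining them with a union bound, which accounts for the factor $2$. Write $X = \sum_{i=1}^{N} X_i$ with the $X_i$ independent and $\{0,1\}$-valued, put $p_i := \PP[X_i = 1]$ and $\mu := \EE[X] = \sum_i p_i$. The single computational ingredient is that for every real $t$ and every $i$ we have $\EE[\e^{tX_i}] = 1 + p_i(\e^t - 1) \le \exp\!\big(p_i(\e^t-1)\big)$, so by independence $\EE[\e^{tX}] \le \exp\!\big(\mu(\e^t - 1)\big)$.

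For the upper tail I would apply Markov's inequality to $\e^{tX}$ with $t > 0$: $\PP[X \ge (1+\delta)\mu] \le \e^{-t(1+\delta)\mu}\EE[\e^{tX}] \le \exp\!\big(\mu(\e^{t}-1-t(1+\delta))\big)$, and then optimise by taking $t = \ln(1+\delta)$, which gives $\PP[X \ge (1+\delta)\mu] \le \big(\e^{\delta}(1+\delta)^{-(1+\delta)}\big)^{\mu}$. It then remains to verify the scalar inequality $(1+\delta)\ln(1+\delta) - \delta \ge \delta^2/3$ on $[0,1]$: letting $f(\delta)$ be the difference of the two sides, one has $f(0)=0$ and $f'(\delta) = \ln(1+\delta) - \tfrac{2}{3}\delta$, which is concave with $f'(0)=0$ and $f'(1) = \ln 2 - \tfrac23 > 0$, hence $f' \ge 0$ on $[0,1]$ (a concave function lies above the chord joining its endpoints) and so $f \ge 0$. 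This yields $\PP[X \ge (1+\delta)\mu] \le \e^{-\delta^2\mu/3}$.

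For the lower tail I would symmetrically apply Markov to $\e^{-tX}$ with $t > 0$, obtaining $\PP[X \le (1-\delta)\mu] \le \exp\!\big(\mu(\e^{-t}-1+t(1-\delta))\big)$; optimising with $t = -\ln(1-\delta) > 0$ gives $\PP[X \le (1-\delta)\mu] \le \big(\e^{-\delta}(1-\delta)^{-(1-\delta)}\big)^{\mu}$. A one-variable calculus check analogous to the above (this time with $g(\delta) = -\delta - (1-\delta)\ln(1-\delta) + \delta^2/2$, which satisfies $g(0)=0$ and $g''(\delta) = -\delta/(1-\delta) < 0$, so $g' \le g'(0) = 0$ and hence $g \le 0$) shows $-\delta - (1-\delta)\ln(1-\delta) \le -\delta^2/2 \le -\delta^2/3$ for $\delta \in [0,1)$, whence $\PP[X \le (1-\delta)\mu] \le \e^{-\delta^2\mu/3}$ as well. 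Adding the two tail bounds gives $\PP[\,|X-\mu| \ge \delta\mu\,] \le 2\e^{-\delta^2\mu/3}$, as claimed.

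The argument is entirely routine --- indeed the result is just quoted from \cite{jlr}, so one may also simply cite it. The only mildly delicate points, and the places where the hypothesis $\delta \in (0,1)$ is actually used rather than some larger range, are the two elementary estimates comparing $(1\pm\delta)\ln(1\pm\delta)\mp\delta$ with $\mp\delta^2/3$; I do not expect any genuine obstacle here.
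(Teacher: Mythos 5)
Your proof is correct: the exponential-moment bounds, the choices $t=\ln(1+\delta)$ and $t=-\ln(1-\delta)$, and the two scalar inequalities $(1+\delta)\ln(1+\delta)-\delta\ge\delta^2/3$ and $-\delta-(1-\delta)\ln(1-\delta)\le-\delta^2/2$ all check out for $\delta\in(0,1)$, and the union bound gives the factor $2$. The paper offers no proof of this lemma --- it is quoted verbatim from \cite[Theorem 2.8]{jlr} --- and your argument is essentially the standard derivation found in that reference, so there is nothing further to reconcile.
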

    
    \subsection{Embedding trees with many leaves}
		Suppose that \(T\) has \(\Omega(n)\) leaves and let $L$ be a maximal collection of leaves with \emph{distinct} parents \(M\).
		By the maximum degree assumption, we have $|L| = \Omega(n)$.
		Let $V$ be the vertex set of the perturbed graph \(G\).
		
		We first embed $T\setminus L$ in a rainbow fashion in \(\gnp{n}{C/n}\) using~\Cref{thm:intro-rainbow-almost-spanning-embedding}.
		Completing this to a rainbow embedding of \(T\) essentially amounts to finding a rainbow perfect matching between the image of \(M\) and the uncovered vertices of $V$, which uses all the unused colours.
		This can be reduced to finding a rainbow directed Hamilton cycle in a suitable auxiliary edge-coloured perturbed directed graph.
		For that we will apply
		the following result of the authors, where we recall that \(\dnp{n}{p}\) denotes the binomial random digraph on \(n\) vertices with edge probability \(p\). 

		\begin{theorem}[{\cite[Theorem 1.2]{KLS}}]
		\label{thm:directed_ham_cycle_perturbed_graph}
			Let \(1/C \ll \delta < 1\) and \(D_0\) be a directed graph on vertex set $[n]$ with minimum in- and out-degree at least \(\delta n\).
			Suppose \(D \sim D_0 \cup \dnp{n}{C/n}\) is uniformly coloured in \([n]\).
			Then, with high probability, $D$ has a rainbow directed Hamilton cycle.
		\end{theorem}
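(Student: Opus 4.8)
The plan is to prove this by the absorption method, with the crucial addition that the absorbing structure must be \emph{colour-flexible} as well as vertex-flexible. Indeed, since there are exactly $n$ colours, a rainbow directed Hamilton cycle is precisely one that uses every colour, so there is no slack in the colour budget and the final structure must reconcile, simultaneously, which vertices and which colours remain ``missing'' at the end. Write $V=[n]$. As a preliminary step, split the random edges of $\dnp{n}{C/n}$ into a bounded number of independent copies $\dnp{n}{c_i/n}$, one per phase, and recall each edge of $D_0$ and of each copy carries an independent uniform colour in $[n]$. Then reserve a random \emph{flexible vertex set} $W\subseteq V$ with $|W|=\gamma n$ and a random \emph{flexible palette} $\cols_W\subseteq[n]$ with $|\cols_W|\approx\gamma n$, for a small constant $\gamma=\gamma(\delta)$ to be tuned against the loss parameters below; the condition $\delta^0(D_0)\ge\delta n$ is inherited up to a constant factor by $D_0$ restricted to any of the relevant random vertex subsets, and this is essentially the only way $D_0$ is used.

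First I would build the absorber. Using the edges of $D_0$ together with one random copy $\dnp{n}{c_1/n}$, and colours from $\cols_W$ only, I would construct a rainbow directed path $P_{\mathrm{abs}}$ covering about half of $W$ and with the following flexibility: for every subset $F$ of the remaining half of $W$ and every ``target'' set of $|F|$ colours from an appropriate sub‑palette of $\cols_W$, there is a rainbow directed path on $V(P_{\mathrm{abs}})\cup F$ with the same two endpoints as $P_{\mathrm{abs}}$ whose colour set is $\cols(P_{\mathrm{abs}})$ together with that target set. Concretely $P_{\mathrm{abs}}$ is a concatenation of short \emph{absorbing gadgets}, each with fixed entry and exit vertices and a flexible interior that can be routed in one of two ways --- one traversing a designated flexible vertex and using a designated flexible colour, the other skipping both --- the gadgets being found greedily from the in/out‑neighbourhoods supplied by $\delta^0(D_0)$ and connected in sequence by short paths of a sparse random copy (using that $\dnp{n}{c/n}$ with $c$ a large constant has an edge between any two linear‑sized vertex sets). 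This is the Montgomery--Ferber--Nenadov construction of absorbing paths, run in the coloured directed setting with a ``colour template'' carried in parallel with the ``vertex template'' so that the set of released colours is itself tunable; one also arranges the ends of $P_{\mathrm{abs}}$ to sit inside a small flexible gadget, so that the two colours that eventually appear on the edges joining $P_{\mathrm{abs}}$ to the rest of the cycle can be prescribed.

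Second I would cover the remainder. Restricting to the edges whose (independent, uniform) colour lies outside $\cols_W$ yields, on $V\setminus V(P_{\mathrm{abs}})$, a perturbed digraph again of the form $D_0'\cup\dnp{n}{c_2'/n}$ with $\delta^0(D_0')$ linear, uniformly coloured in a palette of size $(1-\gamma+o(1))n$; in this digraph I would embed a rainbow directed path $Q$, between the two endpoints prescribed by $P_{\mathrm{abs}}$'s flexible ends, covering all of $V\setminus V(P_{\mathrm{abs}})$ except a leftover $U\subseteq W\setminus V(P_{\mathrm{abs}})$, where $|U|$ is a constant fraction of $n$ chosen small enough to be absorbable by $P_{\mathrm{abs}}$ yet large enough to pay for the colour loss of the coupling. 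The existence of such a rainbow $Q$ follows by combining (i) a standard perturbed‑digraph argument giving an \emph{uncoloured} near‑spanning directed path between prescribed endpoints, equipped with its own internal absorber so that the leftover can be steered into $W$, with (ii) the McDiarmid‑type coupling of \Cref{thm:mc_diarmid_argument} (or the Ferber--Krivelevich tool behind \Cref{thm:ferber-kriv-conseq}) in its directed analogue, which upgrades the uncoloured path to a rainbow one at the cost of a $(1+\eps)$ factor in the number of colours --- affordable precisely because $Q$ omits the $\Omega(n)$ vertices of $W$ and the palette available to $Q$ has size $(1-\gamma)n$.

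Finally I would close the cycle: apply the flexibility of $P_{\mathrm{abs}}$ with $F:=U$ and target colour set equal to the colours of $\cols_W$ missed by $Q$, so that the only two colours of $[n]$ left globally unused are two colours that the flexible ends of $P_{\mathrm{abs}}$ can carry on the two edges joining $P_{\mathrm{abs}}(U)$ to $Q$; a short count of vertices, edges and colours then confirms the union is a rainbow directed Hamilton cycle. I expect the main obstacle to be the design and analysis of the absorber. Unlike in the uncoloured case it must deliver \emph{synchronised} vertex‑ and colour‑flexibility --- each absorbed flexible vertex has to free up a controllable flexible colour, and the two residual deficiencies (the leftover vertices $U$ and the leftover colours) must always be matchable --- so the template‑robustness argument has to be carried out for a bipartite‑matching structure on the vertices \emph{and} one on the colours at once. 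Making $\gamma$, the gadget count, the admissible leftover size and the coupling loss mutually consistent, handling the two colours of the closing edges (e.g.\ by revealing them in advance and using the colour‑flexibility to make them the unique unused colours), and checking that all in/out‑degree conditions survive the random vertex and colour partitions, is where the real work lies.
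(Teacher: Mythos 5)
First, note that the paper does not prove \Cref{thm:directed_ham_cycle_perturbed_graph} at all: it is quoted from \cite[Theorem~1.2]{KLS}, and the machinery you sketch (flexible sets, vertex--colour gadgets, template-driven absorption) is precisely the machinery this paper imports from \cite{KL,KLS} in \Cref{sec:overview} and deploys in the proof of \Cref{thm:many_paths}. So at the level of strategy you are on the right track; the problem is a concrete gap in how you handle the colours in your second step.

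The gap is this: with exactly $n$ colours, a rainbow directed Hamilton cycle must use \emph{every} colour, so ``rainbow'' here means ``colour-spanning'', and your plan never gains control over which colours the bulk path $Q$ misses. The McDiarmid/Ferber--Krivelevich coupling (\Cref{thm:mc_diarmid_argument}, \Cref{thm:ferber-kriv-conseq}) only produces a rainbow $Q$ when its palette exceeds its edge count by a constant fraction, so $Q$ necessarily misses $\Theta(\eps n)$ colours, and these are an \emph{arbitrary, uncontrolled} subset of $[n]\setminus \mathcal{C}_W$. Your absorber, as specified, can only insert colours ``from an appropriate sub-palette of $\mathcal{C}_W$'', so those missed colours can never appear on the final cycle, which therefore is not rainbow. (There is also a parameter tension: $U\subseteq W\setminus V(P_{\mathrm{abs}})$ forces $|U|\le |W|/2\approx\gamma n/2$, while with $|\mathcal{C}_W|\approx\gamma n$ the palette available to $Q$ leaves no slack at all for the coupling; this particular issue can be patched by taking $|\mathcal{C}_W|$ much smaller than $|W|$, but the missed-colour problem above persists for any choice of parameters, since the coupling cannot be run with zero slack.) The way the actual argument gets around this --- both in \cite{KLS} and in the analogous spanning-tree argument of \Cref{sec:many_paths} --- is twofold: the bulk embedding is performed against a palette whose size is tied exactly to the number of vertices it must cover, so that the number of unused colours is a small \emph{counted} quantity (of order $\zeta n$ with $\zeta$ much smaller than the flexibility scale $\mu$), and each such leftover colour, which is arbitrary, is forced onto the structure by a flexible connecting path that is guaranteed to \emph{contain a prescribed arbitrary colour} (\Cref{lem:flexible_sets}); only the flexible vertices and flexible colours consumed in this step are then repaired by the template/gadget absorption (\Cref{lem:template}, \Cref{lem:gadget_exists}). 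Your proposal lacks this ``absorb an arbitrary colour'' mechanism, and without it the synchronised vertex/colour bookkeeping you correctly identify as the crux cannot be closed.
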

    
	\subsection{Embedding trees with many bare paths}
		Suppose now that $T$ has $\Omega(n)$ not-too-short disjoint bare paths.
		Consider $r$ such paths of length $\ell$ (where $r = \Omega(n)$ and $\ell$ is a constant which is not too small), and denote the ends of the $i$-th path by $s_i$ and $t_i$.
		Let \(F\) be the forest resulting from removing the interior vertices of these bare paths from \(T\).
		
		We will use~\Cref{thm:rainbow-almost-spanning-embedding} to embed \(F\) in \(G\).
		However, in order to be able to turn this into a rainbow embedding of \(T\) (by embedding a rainbow collection of  \(r\) paths of length \(\ell\), with the $i$-th path having the images of \(s_i\) and \(t_i\) as endpoints), we first prepare an absorbing structure.
		We remark that this is the reason why \Cref{thm:rainbow-almost-spanning-embedding} is stated for random subgraphs of pseudorandom graphs rather than $\gnp{n}{p}$ directly.

		We will state here several lemmas (namely \Cref{lem:flexible_sets,lem:gadget_exists,lem:connecting_gadgets}) from a manuscript \cite{KL} by the first two authors, where they proved an undirected version of \Cref{thm:directed_ham_cycle_perturbed_graph}. These lemmas all have analogues in \cite{KLS} for the directed setting, and in most cases have very similar proofs, but the directed versions do not immediately imply the undirected ones (due to parallel directed edges $xy$ and $yx$ being coloured independently).
		
		\paragraph{Absorber.}
			Before building our absorber, we set aside a set of flexible vertices and flexible colours, where flexible here refers to the fact that they can be used to connect arbitrary pairs of vertices into short rainbow paths using an arbitrary colour.

		\begin{lemma}[Lemma 6.1 in \cite{KL}]
		\label{lem:flexible_sets}
			Let $1/C \ll \nu \ll \mu \ll \delta < 1$.
			Let $G_0$ be an $n$-vertex graph on $V$ with minimum degree at least $\delta n$ and $G \sim G_0 \cup \gnp{n}{C/n}$ be uniformly coloured in $\cC:=[n-1]$.
			Then there exist $V_{\flex} \subseteq V$ and $\cC_{\flex} \subseteq \cC$ of size $\mu n$ such that with high probability the following holds.
			For all $u,v \in V$, $c\in \cC$, and $V'_{\flex} \subseteq V_{\flex}$ and $\cC_{\flex}' \subseteq \cC_{\flex}$ of size at least $(\mu - \nu)n$,
			there exists a rainbow path of length seven with endpoints \(u,v\), internal vertices in \(V'_{\flex}\) and colours in \(\cC_{\flex}' \cup \{c\}\), that contains the colour \(c\).
		\end{lemma}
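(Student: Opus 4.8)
The plan is to take $V_{\flex}$ and $\cC_{\flex}$ to be uniformly random subsets of $V$ and $\cC$ of size $\mu n$, and to show that with high probability --- over this choice together with the randomness of $G$ and its colouring --- the stated property holds. First I would record a few ``local density'' properties that hold with high probability; all follow from Chernoff-type bounds for hypergeometric and binomial variables, union bounded over the at most $n$ relevant vertices and $n$ relevant colours, using $\delta(G_0) \ge \delta n$ and the independence of the colours of distinct edges: (a) $|N_{G_0}(u) \cap V_{\flex}| \ge \mu\delta n/2$ for every $u \in V$; (b) for every colour $c \in \cC$, the graph $G[V_{\flex}]$ has at least $\delta\mu^2 n/8$ edges of colour $c$, at most $O(\mu^3 n)$ vertices incident to two or more of them, and maximum $c$-degree $O(\log n)$. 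Since $\nu \ll \mu\delta$, property (a) survives the deletion of any $\nu n$ vertices, so $|N_{G_0}(u) \cap V'_{\flex}| \ge \mu\delta n/4$ for every $u$ and every admissible $V'_{\flex} \subseteq V_{\flex}$.

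The heart of the argument is the following reachability claim: with high probability, for every $u \in V$, every admissible $V'_{\flex}$ and $\cC'_{\flex}$, every $w \in V'_{\flex}$, and every set $F_V$ of at most ten ``forbidden'' vertices and $F_C$ of at most ten ``forbidden'' colours, there is a path $u\text{-}a\text{-}b\text{-}w$ with $a,b \in V'_{\flex} \setminus F_V$ and three distinct colours from $\cC'_{\flex}\setminus F_C$. To prove it, fix $u, V'_{\flex}, \cC'_{\flex}, w$ and set $A := \{a \in V'_{\flex} \setminus (\{u,w\}\cup F_V) : ua \in E(G_0),\ \cols(ua) \in \cC'_{\flex} \setminus F_C\}$, and define $B$ analogously with $w$ in place of $u$. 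By (a) the relevant $G_0$-neighbourhoods have size at least $\mu\delta n/4$, and each of their edges independently gets a colour in $\cC'_{\flex}\setminus F_C$ with probability at least $\mu/2$, so a Chernoff bound gives $|A|,|B| \ge \mu^2\delta n/32$ with probability $1-\exp(-\Omega(\mu^2\delta n))$. Now pick disjoint subsets $A^* \subseteq A$ and $B^* \subseteq B$ each of size $\Omega(\mu^2\delta n)$ (possible since $A$ and $B$ both lie inside the $\mu n$-vertex set $V'_{\flex}$): the $\Omega(\mu^4\delta^2 n^2)$ pairs between $A^*$ and $B^*$ give independent potential $\gnp{n}{C/n}$-edges with independent colours, so with probability $1-\exp(-\Omega(\mu^5\delta^2 C n))$ one of them is present and receives a colour in $\cC'_{\flex}$ distinct from $\cols(ua)$ and $\cols(bw)$, which completes a valid length-three path. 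The role of the random middle edge is essential, as $G_0[V'_{\flex}]$ may itself have diameter far larger than $3$. Finally one union bounds over the $n^2$ choices of $u,w$, the at most $\binom{\mu n}{\nu n}^2 = \exp(O(\nu\log(1/\nu)\,n))$ choices of $(V'_{\flex},\cC'_{\flex})$, and the polynomially many choices of $(F_V, F_C)$; this is $o(1)$ precisely because $\nu \ll \mu \ll \delta$ makes $\nu\log(1/\nu)$ negligible beside $\mu^2\delta$.

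Given this, the lemma follows by a short deterministic argument. Fix $u,v \in V$, $c \in \cC$ and admissible $V'_{\flex},\cC'_{\flex}$. By (b), deleting the $\nu n$ vertices of $V_{\flex}\setminus V'_{\flex}$ destroys at most a third of the colour-$c$ edges of $G[V_{\flex}]$ --- in the worst case one deletes the vertices of largest $c$-degree, but only $O(\mu^3 n)$ of them have $c$-degree above $1$, and $\nu \ll \mu \ll \delta$ --- and deleting the $O(\log n)$ colour-$c$ edges incident to $u$ or $v$ removes $o(n)$ more; hence there is an edge $w_3 w_4$ of colour $c$ with $w_3,w_4 \in V'_{\flex}\setminus\{u,v\}$. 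Apply the reachability claim to connect $u$ to $w_3$ by a path $u\text{-}w_1\text{-}w_2\text{-}w_3$ inside $V'_{\flex}$ with colours in $\cC'_{\flex}\setminus\{c\}$, forbidding the vertices $v, w_4$; then apply it again, in reverse, to connect $v$ to $w_4$ by a path $v\text{-}w_6\text{-}w_5\text{-}w_4$ inside $V'_{\flex}$, forbidding the vertices $u,w_1,w_2,w_3$ and the colours $c,\cols(uw_1),\cols(w_1w_2),\cols(w_2w_3)$. Concatenating yields the path $u,w_1,w_2,w_3,w_4,w_5,w_6,v$: it has length seven, its six internal vertices lie in $V'_{\flex}$, its seven edges carry distinct colours all belonging to $\cC'_{\flex}\cup\{c\}$, and the colour $c$ occurs on $w_3 w_4$.

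The main obstacle, and where essentially all the work lies, is the reachability claim --- specifically pushing the union bound over all admissible $V'_{\flex}$ and $\cC'_{\flex}$ through, which is exactly what forces the hierarchy $\nu \ll \mu$, since the number of admissible pairs is $\exp(\Theta(\nu\log(1/\nu)\,n))$ and must be beaten by the $\exp(-\Omega(\mu^2\delta n))$ failure probability of a single pair. A secondary conceptual point is recognising that the deterministic graph $G_0$, once restricted to the small reserved set $V_{\flex}$, need not expand at all, so the sparse random edges of $\gnp{n}{C/n}$ are genuinely needed to bridge each connection. Keeping the assembled path rainbow with its internal vertices in $V'_{\flex}$ is routine bookkeeping --- only $O(1)$ colours and vertices are ever excluded against linear-sized sets --- which is why the reachability claim is stated with the ``at most ten forbidden vertices and colours'' slack.
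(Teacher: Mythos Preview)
The paper does not prove this lemma --- it is quoted verbatim from the manuscript \cite{KL} --- so there is no proof here to compare against. Your overall architecture (random $V_{\flex},\cC_{\flex}$; a robust length-$3$ ``reachability'' connection $u$--$a$--$b$--$w$ using a $G_0$-edge, a random edge, and a $G_0$-edge; a colour-$c$ edge $w_3w_4$ in the middle; then two applications of reachability) is natural and almost certainly in the spirit of \cite{KL}. The union-bound arithmetic for the reachability claim is correct: the $\exp(-\Omega(\mu^2\delta n))$ and $\exp(-\Omega(\mu^5\delta^2 C n))$ failure probabilities do beat the $\binom{\mu n}{\nu n}^2$ choices of $(V'_{\flex},\cC'_{\flex})$ under the hierarchy $1/C \ll \nu \ll \mu \ll \delta$.

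There is, however, a real gap in your use of property (b). You assert that at most $O(\mu^3 n)$ vertices of $V_{\flex}$ have $c$-degree at least $2$, and combine this with the $O(\log n)$ maximum $c$-degree to argue that deleting any $\nu n$ vertices removes at most a third of the colour-$c$ edges. But what you need is a bound on the \emph{total} $c$-degree of the high-$c$-degree vertices, and your two facts together only yield $O(\mu^3 n \log n)$ removed edges; since $\mu$ is a constant, $\mu^3\log n$ eventually dwarfs $\mu^2\delta$. Worse, the $O(\mu^3 n)$ count itself is only an expectation: the number of colour-$c$ cherries in $G[V_{\flex}]$ has mean and variance both $\Theta(\mu^3 n)$, so Chebyshev gives per-colour failure probability $\Theta(1/(\mu^3 n))$, which does not survive the union bound over $n-1$ colours.

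The clean fix is to drop (b) and absorb the ``a colour-$c$ edge survives in $V'_{\flex}$'' step into the same union bound you already run for reachability. For fixed $c$ and fixed $V'_{\flex}$, property (a) gives $e(G_0[V'_{\flex}]) \ge \mu^2\delta n^2/16$, so by Chernoff the number of colour-$c$ edges inside $G_0[V'_{\flex}]$ is at least $\mu^2\delta n/32$ with failure probability $\exp(-\Omega(\mu^2\delta n))$; this beats the $n\binom{\mu n}{\nu n}$ choices exactly as before, and only $O(\log n)$ of those edges touch $u$ or $v$. Two minor points also want tightening: your parenthetical ``possible since $A,B \subseteq V'_{\flex}$'' does not give disjoint $A^*,B^*$ --- split $A\cap B$ in half instead --- and before searching for the random middle edge you should discard the $|A^*|\cdot O(\log n)$ pairs $(a,b)$ with $\cols(ua)=\cols(bw)$, so that the resulting $u$--$a$--$b$--$w$ path is genuinely rainbow.
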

		
		The building block of our absorber is given by the so-called 
		\emph{\((v,c)\)-gadget}.
		These have been introduced by Gould, Kelly, K\"uhn and Osthus~\cite{kuhn-osthus} in the context of random optimal proper colourings of the complete graph, and have already been used for perturbed graphs in~\cite{KLS}.

		\begin{definition}[Gadget] \label{def:gadget}
			Let \(v\) be a vertex and \(c\) a colour.
			A \emph{\((v,c)\)-gadget}, denoted by $A_{v,c}$,
			is the edge-coloured graph on $11$ vertices depicted in \Cref{fig:absorber}.
			
			With reference to the notation in \Cref{fig:absorber}, we call \(P:=u_1vu_2P_1w_2w_3P_2w_1w_4\) the \emph{\((v,c)\)-absorbing path}
			and \(P':=u_1u_2P_1w_2w_1P_2w_3w_4\) the \emph{\((v,c)\)-avoiding path}.    
			Moreover, we call $u_1$ the
			\emph{first vertex} of the absorber, and $w_4$ the \emph{last vertex}.
			Finally, we say that \(V(A_{v,c})\setminus \{v\}\) are the \emph{internal vertices} of \(A_{v,c}\) and \(\cols(A_{v,c}) \setminus \{c\}\) are the \emph{internal colours}.
		\end{definition}

		Observe that $P$ and $P'$ in the definition of a gadget are both rainbow paths and share the same endpoints, which are the first and last vertex of the absorber.
		Moreover, $P$ is spanning in $V(A_{v,c})$ and $\cC(A_{v,c})$ and, similarly, for \(P'\) we have \( V(P') = V(P)\setminus \{v\} =  V(A_{v,c})\setminus \{v\} \)
		and
		\(
		\cols(P')
		= \cols(P)\setminus\{c\}
		= \cols(A_{v,c})\setminus \{c\}
		\).

		\begin{figure}[H] 
		\centering
		\begin{subfigure}{0.6\linewidth}
				\includegraphics[width=\linewidth]{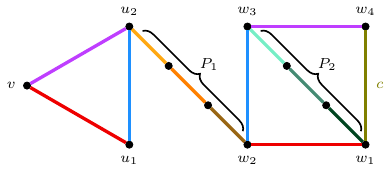}
			\end{subfigure}%
		\vskip 5pt
		\begin{subfigure}[b]{0.45\linewidth}
			\includegraphics[width=\linewidth]{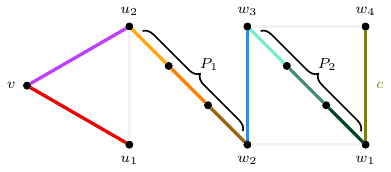}
			\end{subfigure}
		\hfill
		\begin{subfigure}[b]{0.45\linewidth}
		\includegraphics[width=\linewidth]{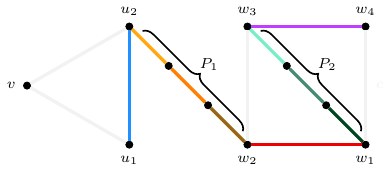}
			\end{subfigure}%
			\caption{
			The top figure shows a \((v,c)\)-gadget \(A_{v,c}\), where we remark that $\cols(w_1w_4)=c$ and all colours are pairwise different, except for $\cols(vu_1)=\cols(w_1w_2)$, $\cols(vu_2)=\cols(w_3w_4)$ and $\cols(u_1u_2)=\cols(w_2w_3)$. The bottom-left figure highlights the absorbing path $P$ and the bottom-right one highlights the avoiding path $P'$.
			}\label{fig:absorber}
		\end{figure}
		
		The existence of $(v,c)$-gadgets is guaranteed by the following lemma.
		
		\begin{lemma}[Lemma 5.2 in \cite{KL}]
		\label{lem:gadget_exists}
			Let $1/C \ll \nu \ll \delta < 1$.
			Let $G_0$ be an $n$-vertex graph on $V$ with minimum degree at least $\delta n$ and $G \sim G_0 \cup \gnp{n}{C/n}$ be uniformly coloured in $\cC:=[n-1]$.
			Then with high probability the following holds.  
			For any $v \in V$ and $c \in \cC$ and for all $V' \subseteq V$ and $\cC' \subseteq \cC$ that have size at least $(1-\nu)n$, there exists a $(v,c)$-gadget with internal vertices in $V'$ and internal colours in $\cC'$.
		\end{lemma}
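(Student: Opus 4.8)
The plan is to reveal the randomness of $G=G_0\cup\gnp{n}{C/n}$ once and for all, record a short list of quasirandom-type properties of $G$ that hold with probability $1-e^{-\Omega(n)}$, and then show \emph{deterministically} --- given these properties --- that for every choice of $v$, $c$, $V'$ and $\cC'$ one can assemble a $(v,c)$-gadget with internal vertices in $V'$ and internal colours in $\cC'$. The exponentially small failure probability is what lets us afford the union bound not only over the $n(n-1)$ pairs $(v,c)$ but also over the $\binom{n}{\le\nu n}^2$ choices of the \emph{forbidden} sets $V\setminus V'$ and $\cC\setminus\cC'$ (here $\nu\ll\delta$ is used, so that $e^{H(\nu)n}\cdot e^{-\Omega(\delta n)}=e^{-\Omega(n)}$, where $H$ is the binary entropy). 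Throughout it is convenient to set $W:=V'\setminus\{v\}$ and $\cC'':=\cC'\setminus\{c\}$, so $|W|,|\cC''|\ge(1-2\nu)n$ and $\delta(G_0[W])\ge(\delta-2\nu)n$.

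The properties I would establish, each via Chernoff's bound and a union bound, are roughly: \textbf{(P1)} for every vertex $x$ and every colour set $D$ with $|D|\le 2\nu n$, at least $\delta n/2$ of the $G_0$-edges at $x$ have colour outside $D$ --- via a second-moment estimate on the colour multiplicities at $x$ (i.e.\ $\sum_{\text{colours}}(\text{mult.})^2=O(n)$ with high probability), which controls the total multiplicity of the $2\nu n$ most popular colours at $x$; \textbf{(P2)} for every colour $\chi$ and every $U\subseteq V$ with $|U|\ge(1-5\nu)n$, $G_0[U]$ contains at least $\delta n/16$ edges of colour $\chi$, since $G_0[U]$ has $\Omega(n^2)$ edges each of colour $\chi$ independently with probability $1/(n-1)$; \textbf{(P3)} for any two disjoint $A,B$ with $|A|,|B|\ge\delta n/2$ and any colour set $D$ with $|D|\le 2\nu n$, $G$ has $\Omega(n)$ edges between $A$ and $B$ whose colour avoids $D$ --- splitting into the cases $e_{G_0}(A,B)=\Omega(n^2)$ (use those edges) and $e_{G_0}(A,B)$ small (then $\Omega(n^2)$ candidate non-$G_0$ pairs remain, so $\mathrm{Bin}(\Omega(n^2),C/n)=\Omega(Cn)$ random edges appear), where $C$ must be large in terms of $\delta$ so that $e^{-\Omega(Cn)}$ beats the union bound over $A,B,D$; \textbf{(P4)} $G_0$ is $C_4$-rich: every $G_0[U]$ with $|U|\ge(1-5\nu)n$ contains $\Omega(n^4)$ labelled $4$-cycles, by a convexity bound using $\delta(G_0[U])\ge(\delta/2)n$.

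Given (P1)--(P4), I would build the gadget in three stages. \emph{Triangle pool:} using (P1) for the edges at $v$ and (P3) inside $N_{G_0}(v)\cap W$ for the edge $u_1u_2$, produce a family of $\Omega(n)$ candidate triangles $vu_1u_2$ with $u_1,u_2\in N_{G_0}(v)\cap W$, $u_1u_2\in E(G)$, and with $a:=\cols(vu_1)$, $b:=\cols(vu_2)$, $\gamma:=\cols(u_1u_2)$ pairwise distinct and lying in $\cC''$ (the $O(n)$ triangles violating one of these genericity conditions are discarded as a lower-order term). \emph{Matching $4$-cycle:} for a candidate triangle with colour triple $(a,\gamma,b)$, a \emph{completing} $4$-cycle is a $4$-cycle $w_1w_2w_3w_4$ in $G[W]$ with $\cols(w_1w_2)=a$, $\cols(w_2w_3)=\gamma$, $\cols(w_3w_4)=b$, $\cols(w_1w_4)=c$; by (P2) (for the $c$-edge) and (P4) (for the underlying $C_4$) a fixed triangle has a completing $4$-cycle with positive \emph{constant} probability, and more usefully the expected number of (triangle, completing $4$-cycle) pairs is $\Omega(n)$, so a second-moment computation shows that with high probability $\Omega(n)$ triangles of the pool are completed. \emph{Connecting paths:} finally attach the two internally-disjoint paths of length three, $P_1$ from $u_2$ to $w_2$ and $P_2$ from $w_3$ to $w_1$, with all six new colours fresh and in $\cC''$; each path amounts to one more edge between two linear-sized neighbourhoods, abundant by (P3). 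At every step the number of valid choices is $\Omega(n)$, comfortably exceeding the $\le 2\nu n$ forbidden vertices/colours and the $\le13$ already-used ones, and the degeneracy conditions (the $11$ vertices distinct, the $10$ internal colours distinct) remove only lower-order terms.

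The genuine obstacle is the \emph{matching $4$-cycle} stage: a $4$-cycle carries four prescribed colours, and a graph with $\Theta(n^2)$ edges has only $\Theta(1)$ expected $4$-cycles with a given colour pattern, so no single triangle can be guaranteed to extend. The way around this is the triangle pool together with a second-moment/concentration estimate, and the delicate point is to make that estimate survive the adversarial deletion of $\nu n$ vertices and $\nu n$ colours --- which is why (P1)--(P4) must be proved with exponentially small failure probability, and why the counting in this stage should be carried out directly inside the restricted sets $W,\cC''$ with the deletions absorbed as $o(1)$-fractions rather than handled afterwards. This pool-plus-second-moment scheme mirrors the proof of the directed analogue of this lemma in~\cite{KLS}.
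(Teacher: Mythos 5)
First, note that this paper does not actually prove \Cref{lem:gadget_exists}: it is imported verbatim as Lemma~5.2 of the manuscript \cite{KL}, and no in-paper proof is given. So there is nothing to compare against line-by-line; what follows assesses your sketch on its own terms.

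Your high-level plan (record quasirandom properties once with exponentially small failure probability, then argue deterministically and union-bound over the $\binom{n}{\le\nu n}^2$ choices of $V\setminus V'$ and $\cC\setminus\cC'$) is the natural one and is consistent with how this family of statements is proved in \cite{KLS}. The genuine gap is in the matching-$4$-cycle stage. You correctly identify the obstruction --- a single triangle extends to a completing $4$-cycle only in expectation $\Theta(1)$, so one must work with a pool of $\Omega(n)$ candidate triangles --- and you correctly note two sentences earlier that every union-bounded event must fail with probability $e^{-\Omega(n)}$. But the tool you then reach for, ``a second-moment computation'', is Chebyshev and gives at best a polynomial failure probability, of order $\Theta(1/n)$ when $\EE[N]=\Theta(n)$ and $\mathrm{Var}(N)=\Theta(n)$. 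That is nowhere near small enough to beat the $e^{\Theta(\nu\log(1/\nu)\,n)}$ choices of forbidden sets, so the argument does not close. What is actually required is an exponential lower-tail bound (even just a lower tail at zero) for the number of (triangle, completing $4$-cycle) pairs, which is a sum of products of colour-indicator events with non-trivial dependence through shared edges and shared colours. One would need something like a Janson/Suen-type inequality adapted to uniformly random edge-colourings, together with a verification that the overlap term $\Delta$ is $o(\EE[N])$, and this is a substantive piece of work not implied by (P1)--(P4). Alternatively one could try to reveal the colouring edge by edge so that each step offers $\Omega(n)$ genuinely independent fresh chances, but the step that closes the $4$-cycle with a \emph{prescribed} colour $c$ (and three colours prescribed by the chosen triangle) still only succeeds with probability $\Theta(1/n)$ per attempt, so that route also collapses back to a pool argument needing an exponential tail. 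As written, the sketch names the right obstacle and the right high-level strategy but substitutes a concentration tool that cannot deliver the tail you yourself say you need. (A minor quibble for the record: a $(v,c)$-gadget has $13$ edges carrying $10$ distinct colours, hence $9$ internal colours, not $10$.)
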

		
		\Cref{lem:gadget_exists} allows to find many vertex- and colour-disjoint gadgets. 
		In order to build a system of paths with a global absorbing property, we connect several of them.
		Suppose, for example, we are given a $(v,c)$-gadget $A_{v,c}$ and a $(v',c')$-gadget $A_{v',c'}$, that are vertex- and colour-disjoint.
		By connecting the last vertex of $A_{v,c}$ to the first vertex of $A_{v',c'}$ with a short rainbow path (vertex- and colour-disjoint of the gadgets), we obtain a structure which can absorb the pairs $(v,c)$ and $(v',c')$ simultaneously.
		The existence of such short rainbow paths is guaranteed by the following lemma.

		\begin{lemma}[Corollary of Lemma 4.3 in \cite{KL}]
		\label{lem:connecting_gadgets}
			Let $1/C \ll \rho, \lambda \ll \delta$.        
			Let $G_0$ be an $n$-vertex graph on $V$ with minimum degree at least $\delta n$ and $\cC$ be a set of colours of size $n-1$.
			Let $G \sim G_0 \cup \gnp{n}{C/n}$ be uniformly coloured in $\cC$. 
			Then with high probability the following holds. 
			For all subsets $V' \subseteq V$ and $\cC' \subseteq \cC$ of size at least $(1 - \nu)n$ and any distinct $u, v \in V$, there exists a rainbow path of length three with $u$ and $v$ as endpoints, with internal vertices in $V'$ and colours in $\cC'$.
		\end{lemma}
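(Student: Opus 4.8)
The plan is to deduce the lemma from two essentially independent ingredients: a mild expansion property of $\gnp{n}{C/n}$, which lets me build many internally edge‑disjoint length‑three paths from $u$ to $v$ through $V'$, and the independence of the random colouring, which lets me show that one of these paths is rainbow with colours in $\cC'$. A union bound over all quadruples $(u,v,V',\cC')$ then finishes things: for each fixed quadruple the failure probability will be $\exp(-\Omega(n))$, whereas the number of quadruples is only $n^2\exp(O(\nu n\log(1/\nu)))$, which is negligible once $\nu$ (the small parameter in the statement) is small in terms of $\delta$.

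\textbf{Candidate paths.} A first‑moment computation, valid because $C$ is large in terms of $1/\delta$, shows that with high probability $\gnp{n}{C/n}$ has the following expansion property: any two disjoint vertex sets of size at least $\gamma n$, for a suitable $\gamma=\gamma(\delta)>0$, are joined by an edge of $\gnp{n}{C/n}$. Condition on this. Fix distinct $u,v$ and $V'$ with $|V'|\ge(1-\nu)n$, and set $A:=(N_{G_0}(u)\cap V')\setminus\{u,v\}$ and $B:=(N_{G_0}(v)\cap V')\setminus\{u,v\}$, so $|A|,|B|\ge\delta n-\nu n-2\ge\delta n/2$. Now run a greedy process: while there is an edge $xy$ of $\gnp{n}{C/n}$ with $x$ in the surviving part of $A$ and $y$ in the surviving part of $B$, add $xy$ to a matching and delete $\{x,y\}$ from both surviving sets. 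By the expansion property this cannot halt before a linear number of steps — at a halt, the surviving sets have no crossing edge, so either they contain two disjoint sets of size $\ge\gamma n$ or their common part does, contradicting expansion; this is the only point needing care when $N_{G_0}(u)$ and $N_{G_0}(v)$ overlap heavily. One obtains pairs $x_1y_1,\dots,x_my_m$ with $m\ge cn$ for some $c=c(\delta)>0$, the $x_i$ distinct, the $y_i$ distinct, and no pair repeated in reverse. Each $P_i:=ux_iy_iv$ is a length‑three path from $u$ to $v$ with interior $x_i,y_i\in V'$, using $ux_i,y_iv\in E(G_0)$ and $x_iy_i\in E(\gnp{n}{C/n})$; and since the $x_i,y_i$ are distinct and avoid $\{u,v\}$, a short check shows the $3m$ edges of $P_1,\dots,P_m$ are pairwise distinct.

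\textbf{Colouring and union bound.} Condition additionally on $\gnp{n}{C/n}$, so the colouring of $G$ is an independent uniformly random assignment of colours from $\cC$ to its edges, and the matching above is fixed. Since the $3m$ edges are pairwise distinct, their colours are i.i.d.\ uniform on $\cC$, hence so are the triples $(\cols(ux_i),\cols(x_iy_i),\cols(y_iv))$. Call $P_i$ \emph{good} if these three colours are pairwise distinct and all lie in $\cC'$; as $|\cC'|\ge(1-\nu)n\ge(1-\nu)|\cC|$ we get $\PP[P_i\text{ good}]\ge1-6\nu$, and by independence $\PP[\text{no }P_i\text{ good}]\le(6\nu)^{cn}$. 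There are at most $n^2$ choices of $(u,v)$, and at most $\exp(O(\nu n\log(1/\nu)))$ choices of each of $V'$ and $\cC'$ (as each complement has size at most $\nu n$). Summing, the probability that some quadruple admits no good $P_i$ is at most $n^2\exp(O(\nu n\log(1/\nu)))(6\nu)^{cn}$, which tends to $0$ once $\nu$ is small enough in terms of $\delta$ (so that $c\log(1/(6\nu))$ dominates the $O(\nu\log(1/\nu))$ term). A good $P_i$ is exactly the rainbow path the lemma asks for, so this completes the argument.

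\textbf{Main obstacle.} The delicate step is the combinatorial one: producing $\Omega(n)$ length‑three $u$–$v$ paths through $V'$ that are pairwise edge‑disjoint, equivalently a linear matching built from random edges between the two (possibly largely coinciding) neighbourhoods $N_{G_0}(u)$ and $N_{G_0}(v)$. It is precisely this edge‑disjointness that makes the colour triples genuinely independent, hence yields the $\exp(-\Omega(n))$ failure probability needed to absorb the union bound over all $V'$ and $\cC'$; handling the overlap of the two neighbourhoods in the greedy argument is the part that requires actual care.
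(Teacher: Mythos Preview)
The paper does not prove this lemma; it is quoted verbatim as a corollary of Lemma~4.3 in the cited manuscript \cite{KL}, so there is no in-paper argument to compare against. Your proof is self-contained and correct. The two-stage strategy --- first use expansion of $\gnp{n}{C/n}$ to produce $\Omega(n)$ edge-disjoint length-three $u$--$v$ paths through $V'$ with the two outer edges in $G_0$ and the middle edge random, then exploit independence of the colouring to get a good path with failure probability $(O(\nu))^{\Omega(n)}$, and finally union bound over the $n^2\exp(O(\nu n\log(1/\nu)))$ quadruples --- is exactly the natural approach, and the calculation goes through since $\nu\ll\delta$ makes $c(\delta)\log(1/\nu)$ dominate $\nu\log(1/\nu)$.

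Two small remarks. First, your overlap analysis is right but could be written out: when the greedy process halts with surviving sets $A',B'$, either $|A'\cap B'|\ge 2\gamma n$ (split it in half and apply expansion inside) or $|A'\setminus B'|,|B'\setminus A'|\ge |A'|-2\gamma n,|B'|-2\gamma n$ (apply expansion between these disjoint sets); in either case one finds an edge, so the process cannot halt before $|A'|$ or $|B'|$ drops below $3\gamma n$, giving $m\ge(\delta/2-3\gamma)n/2$. Second, since you delete $\{x_i,y_i\}$ from \emph{both} surviving sets, the $2m$ vertices $x_1,\dots,x_m,y_1,\dots,y_m$ are in fact all distinct, which is slightly stronger than ``the $x_i$ distinct, the $y_i$ distinct'' and makes the edge-disjointness check immediate.
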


		\paragraph{Template.} 
			Note that we only have enough space to accommodate $O(n)$ gadgets.
			The way we choose which pairs $(v,c)$ to absorb (in order for the final structure to have strong absorbing properties) will be dictated by an auxiliary \emph{template} graph.
			This technique has been introduced by Montgomery~\cite{montgomery_arxiv,montgomery} and has already found a number applications. 
		
			\begin{lemma}[Variant of Lemma 2.8 in~\cite{montgomery_arxiv}]
				\label{lem:template}
				Let $1/n \ll \zeta \le 1$ and suppose that $\zeta n$ is an integer.
				Then there exists a bipartite graph $H$ on vertex classes $R$ and $S_1 \cup S_2$ with $|R| = (2 - \zeta)n$, $|S_1| = |S_2| = n$ and $d_H(x)=40$ for each $x \in R$, such that the following is true. 
				Given any subset $S_2' \subseteq S_2$ with $|S_2'| = \zeta n$, there is a matching between $R$ and $S_1 \cup (S_2 \setminus S_2')$.
			\end{lemma}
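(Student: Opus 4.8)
The plan is to construct $H$ explicitly; the statement is a mild variant of Montgomery's robustly matchable bipartite graph template \cite[Lemma~2.8]{montgomery_arxiv}, so I would follow that construction and only adjust the bookkeeping to the parameters here. Two easy reductions come first. It is enough to build $H$ with $d_H(x)\le 40$ for every $x\in R$: adding edges to $H$ can only enlarge the family of perfect matchings of any subgraph $H[R,\,S_1\cup(S_2\setminus S_2')]$, so at the end one adds arbitrary edges to each $R$-vertex of deficient degree until every $R$-vertex has degree exactly $40$. Moreover, since $|S_1\cup(S_2\setminus S_2')| = n+(1-\zeta)n = (2-\zeta)n = |R|$ for every admissible $S_2'$, the sought matching is necessarily a perfect matching, so by Hall's theorem the goal becomes: guarantee that for every $S_2'\subseteq S_2$ with $|S_2'|=\zeta n$ and every $W\subseteq R$ one has $|N_H(W)\cap(S_1\cup(S_2\setminus S_2'))|\ge |W|$.

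The building block is a bounded-degree bipartite expander. By a standard first-moment argument --- let each vertex of the larger side independently choose a bounded number of random neighbours on the smaller side, then union-bound over the subsets that could violate expansion --- there is an absolute constant $\Delta_0$ such that for every integer $s$ there is a bipartite graph $B$ with parts $P,Q$ of sizes $s$ and $\lceil s/2 \rceil$, of maximum degree at most $\Delta_0$, with $|N_B(W)|\ge \min\{|Q|,\ 3|W|/2\}$ for all $W\subseteq P$. In particular $B$ has a matching saturating any $W\subseteq P$ with $|W|\le|Q|$, and this property is stable under deleting a controlled number of vertices from $Q$ (the extra expansion slack $|N_B(W)|-|W|$ absorbs the deletions).

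With this in hand I would assemble $H$ as Montgomery's cascade of expanders. Fix nested ``level'' sizes $2n = m_0 > m_1 > \dots > m_k$ with $m_{i+1}=\lceil m_i/2\rceil$ and $k=\Theta(\log(1/\zeta))$ chosen so that $m_k$ is exactly $\zeta n$; the hypotheses $1/n\ll\zeta$ and $\zeta n\in\mathbb{Z}$ make it possible to pick the $m_i$ (with an occasional off-by-a-small-amount adjustment of one level) so that this works out exactly. Take disjoint sets $X_1,\dots,X_k$ with $|X_i|=m_{i-1}-m_i$ and set $R:=X_1\cup\dots\cup X_k$, so that $|R|=m_0-m_k=2n-\zeta n=(2-\zeta)n$, as required. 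Identify level $0$ with $S_1\cup S_2$, wire consecutive levels together by bounded-degree bipartite expanders in Montgomery's fashion (each $X_i$ being the ``source'' of the expander joining level $i-1$ to level $i$), and include a fixed perfect matching of $X_1$ into $S_1$ (this is what makes the deepest levels discharge into the ``safe'' side $S_1$). Each $R$-vertex then lies in exactly one building block (or in that one perfect matching together with one expander), so after taking $\Delta_0$ small enough its degree is at most $40$; pad to exactly $40$.

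The verification of the robust matching property is the substantive step. Given $S_2'$ with $|S_2'|=\zeta n$, I would construct the perfect matching of $H[R,\,S_1\cup(S_2\setminus S_2')]$ greedily, processing levels from the top down: match as many of the top-level $R$-vertices as possible using the edges into $S_1\cup(S_2\setminus S_2')$, and push the (few) vertices whose targets were deleted down through the expander to the next level; the expansion of each building block forces the number of still-unmatched $R$-vertices to shrink by a constant factor at every level, so after $k=\Theta(\log(1/\zeta))$ levels it falls below the size of the deepest level, whose remaining slack is absorbed by the fixed matching into $S_1$. Equivalently, one checks Hall's condition for $H[R,\,S_1\cup(S_2\setminus S_2')]$ layer by layer using the expansion estimate. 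The main obstacle will be exactly this last argument: one must choose the expansion ratio, the level-shrink ratio, the number of levels $k$, and the degree budget consistently, and control the integer rounding of the $m_i$ so that the level sizes sum to precisely $(2-\zeta)n$ and the $\zeta n$ deleted vertices are absorbed precisely --- this is the routine-but-fiddly content of Montgomery's Lemma~2.8, which I would reproduce with the minor changes dictated by our parameters.
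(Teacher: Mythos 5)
The paper gives no proof of \Cref{lem:template}; it only remarks that it ``can be proved identically to the proof of Lemma 2.8 in~\cite{montgomery_arxiv}.'' Your reconstruction of Montgomery's cascade-of-expanders argument is the right framework, and the two preliminary reductions are sound: padding degrees up to exactly $40$ only adds edges, and since $|S_1 \cup (S_2\setminus S_2')| = (2-\zeta)n = |R|$ for every admissible $S_2'$, the desired matching is perfect, so Hall's condition is what must be checked.

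The genuine gap is in the building block. No bipartite graph $B$ with parts $P,Q$ of sizes $s$ and $\lceil s/2\rceil$ and bounded maximum degree can satisfy $|N_B(W)|\ge \min\{|Q|,\,3|W|/2\}$ for \emph{all} $W\subseteq P$: pick any $q\in Q$, say with $\deg_B(q)\le\Delta_0$, and set $W := P\setminus N_B(q)$. For $s\ge 2\Delta_0$ we have $|W|\ge s-\Delta_0\ge |Q|$, so your bound would force $N_B(W)=Q$, but $q\notin N_B(W)$ by construction. Montgomery's lemma only provides expansion for sets $W$ of size up to a small constant fraction of the source side, and large $W$ are handled by a separate covering/averaging argument. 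This restriction is not cosmetic: the ``push the unmatched vertices down'' verification you sketch must be redone with the restricted expansion (and a case split for large unmatched sets) in hand. A smaller but related point: $\Delta_0$ is not a parameter you can ``take small enough'' --- it is forced by the expander existence proof --- so one must actually compute the degree coming out of the construction and check it fits within the budget of $40$ (for comparison, Montgomery's Lemma~2.8 states a bound of $100$ with its parameters, so the constant here is carried over from the particular numerics of this variant and needs to be justified, not just asserted).
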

			\Cref{lem:template} can be proved identically to the proof of Lemma 2.8 in~\cite{montgomery_arxiv}.
			We call the graph given by \Cref{lem:template} an \emph{$(n,\zeta)$-template graph} on $(R,S_1 \cup S_2)$. 

\section{Embedding trees with many leaves}
	\label{sec:many_leaves}

	\begin{proof}[Proof of \Cref{thm:many_leaves}]
		Let \(\eps, \lambda>0\) be such that
		\[
			1/C \ll \eps \ll \lambda \ll \zeta, \delta, 1/d\,, 
		\]
		let \(\rg_1 \sim \gnp{n}{C/n}\), so that \(G \sim G_0 \cup \rg_1\), and set $V:=V(G)$ and $\cC:=[n-1]$.

		We embed \(T\) in \(G\) in two steps.
		First, we remove a small linear number of leaves with distinct parents, and embed the resulting almost-spanning tree in a rainbow fashion
		using~\Cref{thm:intro-rainbow-almost-spanning-embedding} and $\rg_1$.
		Then we will find a rainbow perfect matching between the images of the parents and the uncovered vertices of $V$, using all remaining colours.

		Let $L$ be a maximal collection of leaves of $T$ with distinct parents. 
		Since $T$ has at least $\zeta n$ leaves and maximum degree at most $d$, we have $|L| \ge d^{-1} \zeta n \ge \lambda n$ and we pick an arbitrary subset of $L$ of size $\lambda n$, which, abusing notation, we denote by $L$.
		Let $M$ be the collection of parents of the leaves in $L$ and observe that $|L| = |M|$. 
		Finally let $T' = T \setminus L$ and note $T'$ is a tree on $(1-\lambda)n$ vertices.

		Let $R$ be a random subset of $V$ of size $(\lambda-\eps) n$.  
		Then, by Chernoff's bound and the union bound, with high probability,
		\begin{equation} \label{eqn:deg-R}
			\text{every \(v\in V\) satisfies }
			\abs{N_{G_0}(v) \cap R} \ge \frac{1}{2} \cdot \delta \lambda n.
		\end{equation}
		We assume that this holds.

		We claim that $\rg_1[V \setminus R]$ contains a rainbow copy of $T'$, with high probability.
		Writing $n' :=|V \setminus R|= (1 - (\lambda - \eps))n$, let $\cC'$ be a subset of $\cC$ of size $n'$, and let $\rg_1'$ be the subgraph of $\rg_1[V \setminus R]$ consisting of edges coloured $\cC'$. Then $\rg_1'$ is a copy of a random graph $\gnp{n'}{C'/n'}$, where $C' = (|\cC'|/|\cC|) \cdot C \ge C/2$, which is uniformly coloured in $\cC'$. As $|V(T')| = (1 - \lambda)n \le (1 - \eps/2)n'$, \Cref{thm:intro-rainbow-almost-spanning-embedding} implies that $\rg'$ contains a rainbow copy of $T'$, as claimed.

		Assume that a rainbow $T'$ as above exists, and fix an embedding of it in $V \setminus R$.
		Let $\cC_0$ be the set of colours in $\cC$ not used for the embedding of \(T'\), let $M_0$ be the image of $M$ in the embedding, and let $V_0$ be the set of vertices in $V \setminus R$ that are not used in the embedding. Then
		\begin{align*}
			|\cC_0| & = n -1 - \left(\abs{V(T')}-1\right) = \lambda n, \\
			\abs{V_0} & = n - \abs{V(T')} - \abs{R} = \eps n, \\
			|M_0| & = \lambda n.
		\end{align*}

		We claim that, with high probability, 
		\begin{equation} \label{eqn:deg-M}
			\text{every $v \in V$ satisfies }
			|N_{G_0}(v) \cap M_0| \ge \frac{1}{2} \cdot \delta \lambda n.
		\end{equation}
		Indeed, observe that $M_0$ is distributed uniformly at random among all subsets of $V \setminus R$ of size $\lambda n$\footnote{
		Let us give some more formal details to convince the reader that this line of reasoning is valid. 
		Suppose we embed \(T'\) in a random graph with vertex set \(V'\), with \(V' \cap V = \emptyset\) and \(\abs{V'} = n\).
		Choose uniformly at random a bijection \(\pi : V' \rightarrow V\).
		Then the image of \(V(T')\) under \(\pi\) is distributed uniformly at random among all subsets of \(V\) of size \(\abs{V(T')}\).
		}, thus
		the assertion in \eqref{eqn:deg-M} follows from a standard application of Chernoff's bound and the union bound, using $\eps \ll \lambda$.

        We are left to find a rainbow perfect matching between $M_0$ and $V_0 \cup R$ using all colours in \(\cols_0\), and we will do that in two phases, by first finding a rainbow matching saturating $V_0$.
		Write $G_0':=G_0 \setminus \rg_1$ and note that so far we have only revealed colours of the edges of $\rg_1[V \setminus R]$, and thus the colours of the edges of $G_0'[V_0, M_0]$ are yet to be revealed.

		\begin{claim}
			With high probability, there is a rainbow matching in $G_0'[V_0, M_0]$ which saturates $V_0$ and uses colours in $\cC_0$.
		\end{claim}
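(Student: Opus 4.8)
The plan is a greedy (online) argument, exploiting $\eps \ll \lambda, \delta$, so that both $V_0$ and the set of colours we are forced to use are tiny compared with the relevant degrees in $G_0'$ and with $|\cC_0|$.

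First I would isolate what we may condition on. Everything revealed so far — the graph $\rg_1$, the colours of the edges of $\rg_1[V\setminus R]$, and hence the chosen embedding of $T'$ together with the resulting sets $V_0$, $M_0$ and palette $\cC_0$ — is a function of $\rg_1$ and of the colouring restricted to $E(\rg_1)$. Since $G_0' = G_0\setminus \rg_1$, no edge of the bipartite graph $H := G_0'[V_0,M_0]$ belongs to $\rg_1$, and since $V_0\cap M_0=\emptyset$ the edges of $H$ are pairwise distinct and their colours have not been examined. Hence, conditionally on all of the above, the colours $(\cols(e))_{e\in E(H)}$ are independent and uniform on $[n-1]$. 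For the degrees in $H$: by \eqref{eqn:deg-M}, with high probability every $v\in V_0$ has $|N_{G_0}(v)\cap M_0|\ge \frac{1}{2}\delta\lambda n$, while $\rg_1[V\setminus R]$ is a binomial random graph with edge probability $C/n$ and so with high probability has maximum degree at most $\log^2 n$; as $M_0\subseteq V\setminus R$, this gives $|N_{G_0'}(v)\cap M_0|\ge \frac{1}{3}\delta\lambda n$ for every $v\in V_0$, with high probability. (Since a rainbow $T'$ exists with high probability, conditioning on its existence does not spoil these statements.)

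Now enumerate $V_0=\{v_1,\dots,v_{\eps n}\}$ and build the matching edge by edge. Given a partial rainbow matching saturating $\{v_1,\dots,v_{i-1}\}$ with colours in $\cC_0$, let $A_i\subseteq M_0$ be the unmatched vertices and $B_i\subseteq \cC_0$ the unused colours; as $i-1<\eps n\le\frac{1}{2}\lambda n$ we have $|A_i|,|B_i|\ge\frac{1}{2}\lambda n$, and $v_i$ has at least $\frac{1}{3}\delta\lambda n-\eps n\ge\frac{1}{6}\delta\lambda n$ neighbours in $H$ lying in $A_i$ — call this set $D_i$. Both $D_i$ and $B_i$ are measurable with respect to the first $i-1$ steps, and the colours of the $|D_i|$ edges from $v_i$ into $D_i$ are independent and uniform on $[n-1]$, each landing in $B_i$ with probability $|B_i|/(n-1)\ge\lambda/3$. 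Thus the number $Z_i$ of $m\in D_i$ with $\cols(v_im)\in B_i$ is a sum of independent indicators with mean at least $\frac{\delta\lambda^2}{18}n$, and Chernoff's bound gives $\PP[Z_i=0]\le \exp(-\Omega(n))$. On the complementary event we extend the matching by the edge $v_im$ for any such $m$ (using the colour $\cols(v_im)\in B_i$). A union bound over the $\eps n$ steps then shows that with high probability the process never gets stuck, which yields the desired rainbow matching saturating $V_0$ with colours in $\cC_0$.

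I do not expect a genuine obstacle here; the only delicate point is the bookkeeping of revealed information — in particular, checking that the colours of $E(G_0'[V_0,M_0])$ are genuinely untouched at each step, which relies on both $E(G_0')\cap E(\rg_1)=\emptyset$ and $V_0\cap M_0=\emptyset$ — together with keeping track of the hierarchy $\eps\ll\lambda\ll\delta$ so that the crude inequalities above (such as $\eps n\le\frac{1}{6}\delta\lambda n$) are valid.
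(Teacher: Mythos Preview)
Your proof is correct and follows essentially the same approach as the paper: both exploit that the colours on $G_0'[V_0,M_0]$ are fresh randomness and build the matching greedily, using $\eps \ll \lambda, \delta$ to ensure enough room. The only difference is cosmetic --- the paper first shows (via a union bound over subsets $\cC'\subseteq\cC_0$ of size $2\eps n$) that every $v\in V_0$ sees at least $2\eps n$ colours from $\cC_0$ on its $G_0'$-edges to $M_0$ and then runs a deterministic greedy, whereas you reveal colours online and apply Chernoff at each step.
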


		\begin{proof}
			For $v \in V_0$, write $X_v$ for the number of colours from $\cC_0$ appearing on edges in $G_0'[\{v\}, M_0]$.
			We claim that, with high probability, $X_v \ge 2\eps n$ for every $v \in V_0$.
			Note that this implies the claim as, since $|V_0|=\eps n$, the required rainbow matching can be constructed greedily.

			To estimate the probability that $X_v < 2\eps n$, note that if this holds then there is a subset $\cC' \subseteq \cC_0$ of size $2\eps n$ such that all edges of $G_0'$ between $v$ and $M_0$ are coloured using colours in $(\cC \setminus \cC_0) \cup \cC'$. Here we use \eqref{eqn:deg-M} and $\frac{|(\cC \setminus \cC_0) \cup \cC'|}{|\cC|} \le 1-(\lambda - 2\eps)$, as well as the easy fact that, with high probability, $|N_{\rg_1}(v) \cap M_0| \le 100 \log n$ for every $v \in V_0$.
			\begin{align*}
				\PP\left[X_v < 2\eps n\right] 
				& \le \binom{\lambda n}{2\eps n} \cdot \big(1 - (\lambda - 2\eps)\big)^{|N_{G_0 \setminus \rg_1}(v) \cap M_0|} \\
				& \le \left(\frac{e\lambda}{2\eps}\right)^{2\eps n} \exp\Big(-(\lambda - 2\eps) \cdot  \frac{\delta \lambda n}{4}\Big) \\
				& \le \exp\left(\left(2\eps \cdot \log(e\lambda/(2\eps)) - \delta \lambda^2/8 \right)n\right) = o(n^{-1}),
			\end{align*}
			using $\eps \ll \lambda, \delta$.
			It follows that $X_v \ge 2\eps n$ for every $v \in V_0$, with high probability.
		\end{proof}

		Let $\cC_1$ be set of colours still available and \(M_1\) the unsaturated vertices in $M_0$.
		Then $|M_1| = |\cC_1| = |R| = (\lambda - \eps)n$.
		Note that, so far, we have not revealed any colours of edges touching $R$, nor edges of $\rg_1$ touching $R$.
		Also, using \eqref{eqn:deg-R}, \eqref{eqn:deg-M} and the fact that $|M_1|=|M_0|-\eps n$, the graph $G_0[M_1, R]$ is a balanced bipartite graph on $2(\lambda - \eps)n$ vertices, with minimum degree at least $\delta \lambda n/2 - \eps n \ge \delta \lambda n/4$.

		We define three random bipartite graphs $H_0, \rH_1, \rH_2$, with bipartition $(M_1, R)$, as follows. 
		Let the edges in $H_0$ be the edges in $G_0[M_1,R]$ that are not in $\rg_1$ and whose colour is in $\cC_1$, let the edges in $\rH_1$ be the edges in $\rg_1[M_1,R]$ that have a colour in $\cC_1$, and include each pair in $M_1 \times R$ in $\rH_2$ with probability $\lambda C/(2n)$, independently.
		
		\begin{claim}
			Fix an outcome of $H_0$. Then $\rH_1$ can be coupled with $\rH_2$ so that $H_0 \cup \rH_2 \subseteq H_0 \cup \rH_1$.
		\end{claim}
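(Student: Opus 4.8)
The plan is to build the coupling one pair at a time, using that the underlying randomness is a product over the pairs $e \in M_1 \times R$: each such $e$ lies in $\rg_1$ with probability $C/n$, independently, and, independently of that, if $e$ is present in $G$ it receives a uniformly random colour — and crucially none of this randomness incident to $R$ has been exposed so far. Call this pair of random choices the \emph{state at $e$}. Observe that $e \in \rH_1$ precisely when $e \in \rg_1$ and $\cols(e) \in \cC_1$, whereas $e \in H_0$ precisely when $e \in E(G_0)$, $e \notin \rg_1$ and $\cols(e) \in \cC_1$; in particular $H_0$ and $\rH_1$ are edge-disjoint. Since the event $\{e \in H_0\}$ depends only on the state at $e$, fixing an outcome $H_0^\ast$ of $H_0$ keeps the states mutually independent across $e$: conditioned on $H_0 = H_0^\ast$, every pair in $E(H_0^\ast)$ is forced into $H_0^\ast \cup \rH_1$, while for a pair $e \notin E(H_0^\ast)$ the state at $e$ is conditioned only through the event $\{e \notin H_0\}$ (and not at all when $e \notin E(G_0)$).

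Next I would show that, for every $e \in M_1 \times R$ with $e \notin E(H_0^\ast)$,
\[
	\PP\left[\, e \in \rH_1 \,\middle|\, H_0 = H_0^\ast \,\right] \;\ge\; \frac{\lambda C}{2n}.
\]
Write $p_0 := C/n$ and $\gamma := |\cC_1|/|\cC| = (\lambda - \eps)n/(n-1) \ge \lambda/2$, which holds since $\eps \ll \lambda$. If $e \notin E(G_0)$ the state at $e$ is unconditioned, so the left-hand side equals $p_0\gamma \ge \lambda C/(2n)$. If $e \in E(G_0)$ then $\{e \notin H_0\} = \{e \in \rg_1\} \cup \{\cols(e) \notin \cC_1\}$, and since $\{e \in \rg_1\}$ and $\{\cols(e) \in \cC_1\}$ are independent and $\{e \in \rH_1\} \subseteq \{e \notin H_0\}$, the left-hand side equals $p_0\gamma / \bigl(1-(1-p_0)\gamma\bigr) \ge p_0\gamma \ge \lambda C/(2n)$.

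It then follows that, conditional on $H_0 = H_0^\ast$, the indicators of the events $\{e \in \rH_1\}$ over the pairs $e \in (M_1 \times R) \setminus E(H_0^\ast)$ are mutually independent Bernoulli variables, each with success probability at least $\lambda C/(2n)$. I would conclude with the routine monotone coupling of independent Bernoullis: build $\rH_2$ from $\rH_1$ by including, for each $e \notin E(H_0^\ast)$, the pair $e$ in $\rH_2$ only when $e \in \rH_1$, and then only with the conditional probability that makes $\PP[e \in \rH_2]$ equal to exactly $\lambda C/(2n)$; and, for each $e \in E(H_0^\ast)$, including $e$ in $\rH_2$ with probability $\lambda C/(2n)$ using fresh independent randomness. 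Then $\rH_2$ has the prescribed product distribution over $M_1 \times R$, and $\rH_2 \setminus E(H_0^\ast) \subseteq \rH_1$ by construction, so $H_0 \cup \rH_2 \subseteq H_0 \cup \rH_1$ for the fixed outcome, as required.

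The step I expect to demand the most care is the conditioning bookkeeping in the first paragraph: one has to check that revealing the complete outcome of $H_0$ leaks nothing about the state at a pair $e \notin E(H_0^\ast)$ beyond the single bit $\{e \notin H_0\}$ — and nothing at all when $e \notin E(G_0)$ — which is precisely what legitimises the conditional independence and the per-pair bound. This is exactly where the product structure of the randomness, together with the fact that the randomness incident to $R$ is still fresh, enters.
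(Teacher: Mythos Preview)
Your proposal is correct and follows essentially the same approach as the paper: both reduce the coupling to the per-pair bound $\PP[e \in \rH_1 \mid e \notin H_0] \ge \lambda C/(2n)$, treating the cases $e \in E(G_0)$ and $e \notin E(G_0)$ separately with the same computations. The paper is terser, asserting that this bound ``suffices'' and leaving the product-structure conditioning and the monotone Bernoulli coupling implicit, whereas you spell both out --- so your version is a more detailed execution of the same argument rather than a different one.
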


		\begin{proof}
			Note that it suffices to prove $\PP\left[e \in E(\rH_1) \,|\, e \notin E(H_0)\right] \ge \frac{\lambda C}{2n}$ for every $e \in M_1 \times R$. To see this, note first that if $e \notin E(G_0)$ then 
			\begin{equation*}
				\PP\left[e \in E(\rH_1) \,|\, e \notin E(H_0)\right]
				= \PP[e \in E(\rH_1)] 
				= \PP[e \in \rg_1] \cdot \PP[\cC(e) \in \cC_1] 
				= \frac{C}{n} \cdot \frac{|\cC_1|}{n-1} \ge \frac{\lambda C}{2n}.
			\end{equation*}
			Now consider $e \in E(G_0)$.
			Then
			\begin{align*}
				\PP\big[e \in E(\rH_1) \,|\, e \notin E(H_0)\big]
				& = \frac{\PP[e \in E(\rH_1) \setminus E(H_0)]}{\PP[e \notin E(H_0)]} \\[.7em]
				& = \frac{\PP[e \in \rg_1 \text{ and }\cC(e) \in \cC_1]}{\PP[e \in E(\rg_1)] + \PP[e \notin E(\rg_1) \text{ and } \cC(e) \notin \cC_1]} \\[.7em]
				& = \frac{\frac{C}{n} \cdot \frac{|\cC_1|}{n-1}}{\frac{C}{n} + \left(1 - \frac{C}{n}\right) \cdot \left(1 - \frac{|\cC_1|}{n-1}\right)}
				\ge \frac{\lambda C}{2n}. \qedhere
			\end{align*}
		\end{proof}
		
		By Chernoff's bound and $\delta(G_0[M_1,R]) \ge \delta \lambda n/4$, with high probability $\delta(H_0) \ge \delta \lambda^2 n/8$. Fix such an outcome of $H_0$, write $H := H_0 \cup \rH_2$, and fix a coupling so that $H \subseteq H_0 \cup \rH_1$.
		Then we know that all edges in $H_0 \cup \rH_2$ are coloured in $\cC_1$, but we have not yet revealed the colours. Thus $H$ is coloured uniformly in $\cC_1$.

		We are done if we can find with high probability a rainbow perfect matching in \(H\) with colours in $\cC_1$.
		To this end, we define the following auxiliary digraph 
		\(D\) with vertex set \([m]\), where $m := |R| = |M_1| = |\cC_1| = (\lambda - \eps)n$.
		Let \(\sigma_1:  [m] \to M_1 \) and \(\sigma_2:  [m] \to R\) be arbitrary bijections.
		Let \(D_0\) and \(\rD_2 \) be the digraphs on \([m]\) with the following edges:
		for distinct $x,y \in [m]$ we have \(xy \in E(D_0) \) if and only if 
		\(\sigma_1 (x) \sigma_2 (y) \in E(H_0)\), and
		\(xy \in E(\rD_2)\) if and only if
		\(\sigma_1 (x) \sigma_2 (y) \in E(\rH_2)\).
        Then define $D:=D_0 \cup \rD_2$.
		It is easy to check that $\delta^0(D_0)=\delta(H_0) \ge \delta \lambda^2 m/8$, each directed edge is present in \(E(\rD_2)\) independently with probability at least \(\lambda^2 C/(4m)\), and each edge of $D$ is coloured independently and uniformly at random in \(\cols_1\).

		Therefore, \(D\) satisfies \Cref{thm:directed_ham_cycle_perturbed_graph} and thus, with high probability, \(D\) has a rainbow directed Hamilton cycle 
		\((x_1,\hdots, x_{m})\).
		Then, from the definition of \(D\), it follows that
		\[
		\{\sigma_1(x_1) \sigma_2(x_2), \sigma_1(x_2) \sigma_2(x_3), \hdots, \sigma_1(x_{m})\sigma_2(x_1)\}
		\] 
		is a rainbow perfect matching in \(H\) that uses all colours in $\cC_1$, as desired.   
	\end{proof}

\section{Embedding trees with many bare paths}
\label{sec:many_paths}
\begin{proof}[Proof of \Cref{thm:many_paths}]
    Let $\mu, \nu$ satisfy
    \[
        1/C \ll \zeta \ll \mu \ll \nu \ll \delta, 1/d\, ,
    \]
    such that
    $\mu/\zeta, \mu n, \zeta n/24$ are integers.
    Set $V:=V(G)$, \(r:= \zeta n /24 \in \nats\) and 
    \(k:= 24 (2\mu - \zeta)/\zeta \in \nats.\) Pick $C'$ so that $(1 - C'/n)^2 = 1 - C/n$ (then $C' \ge C/2$).

    Let \(\rg_1, \rg_2 \sim \gnp{n}{C'/n}\) and \(\rg_3 \sim \gnp{n}{1/2}\) be independent binomial random graphs on \(V\), so that \(G\sim G_0 \cup \rg_1 \cup \rg_2\).
    Let \(G_0'\), \(\rg_1'\)
    be the subgraphs of \(G_0, \rg_1\) respectively 
    with edges in \(E(\rg_3)\), and let \(G' \sim G_0' \cup \rg_1'\).
    Let \(G\dprm\) be the spanning graph of \(K_n\) with edges disjoint from
    \( E(\rg_3) \cup E(\rg_1)\).
    Straightforward applications of the union bound and Chernoff's bound yield that, with high probability, simultaneously
    \setlist{nolistsep}
    \begin{enumerate}[label = \rm(\roman*)]
        \item 
			\(G_0'\) has minimum degree at least \(\delta/3\),
		\item \label{itm:pseudorandom}
			every disjoint vertex sets $A, B$ in \(G\dprm\) with $|A| |B| > 240n$ satisfy $e_{G''}(A,B) \ge |A| \, |B| / 3$.
    \end{enumerate}

	In particular, every induced subgraph of $G''$ on at least $0.99n$ vertices is pseudorandom (as $250 \cdot 0.99 n \ge 240 n$).
    Notice that \(G'\) and \(G\dprm\) are edge-disjoint since \(E(G') \subseteq E(\rg_3)\) and \(E(G\dprm) \cap E(\rg_3) = \emptyset\).

    First we will use \(G'\) to construct the absorbing structure.
    Then we will use the random subgraph of \(G\dprm\), with edges in \(E(\rg_2)\) and colours disjoint from those on the absorbing structure,
    to embed the almost spanning forest resulting from removing \(r=\zeta n/24\) bare paths (whose precise length will be determined later). 
	Finally, we will use the absorbing structure to complete this to a rainbow copy of $T$.

    We start by setting aside the sets needed to build our absorbing structure.
    By the union bound, the conclusions of \Cref{lem:flexible_sets,lem:connecting_gadgets,lem:gadget_exists} hold simultaneously and with high probability for \(G'\), so we assume they all hold.
    Let $V_{\flex} \subseteq V$ and $\cC_{\flex} \subseteq \cC$ be the sets of size $\mu n$ given by \Cref{lem:flexible_sets}.
    Let $V_{\buf}$ and $\cC_{\buf}$ be arbitrary subsets of $V \setminus V_{\flex}$ and $\cC \setminus \cC_{\flex}$ respectively, each of size $\mu n$.
    Let $X:=\{x_i:i \in [r]\}$, $Y:=\{y_i:i \in [r]\}$ and $W$ be pairwise disjoint subsets of $V \setminus (V_{\flex} \cup V_{\buf})$ with $|W|=(2\mu-\zeta)n$.
    Let $\cD$ be a subset of $\cC \setminus (\cC_{\flex} \cup \cC_{\buf})$ with $|\cD|=(2\mu-\zeta)n$.
    Our absorber will be able to absorb a small set of vertices and a small set of colours while connecting, for each $i \in [r]$, the vertex $x_i$ to the vertex $y_i$ through a rainbow path.
    The vertex set $W$ and the colour set $\cD$ will only be used to make our approach work and do not play a special role.

    Let $H$ be the $(\mu n, \zeta/\mu)$-template graph on $(R,S_1 \cup S_2)$ given by \Cref{lem:template}, with $|R|=(2\mu-\zeta) n$ and $|S_1|=|S_2|=\mu n$, and notice $e(H)=40 \cdot |R|$.
    Let $\pi_{v}:S_1 \cup S_2 \rightarrow V_{\buf} \cup V_{\flex}$ be a bijection such that $\pi_v(S_1)=V_{\buf}$ (and thus $\pi_v(S_2)=V_{\flex}$).
    Similarly, let $\pi_c:S_1 \cup S_2 \rightarrow \cC_{\buf} \cup \cC_{\flex}$ be a bijection such that $\pi_c(S_1)=\cC_{\buf}$ (and thus $\pi_c(S_2)=\cC_{\flex}$).    
    Observe that we have $|W|=|\cD|=|R|=r \cdot k$ (recalling that $r = \zeta n / 24$ and $k = 24(2\mu-\zeta)/\zeta$), and thus we can write \(W=\{w_x :x \in R\}\) and \(\cD=\{d_x: x \in R\} \).
    
    \begin{claim}
    \label{claim:gadgets}
        With high probability,
        \(G'\) contains a $(\pi_v(y),d_x)$-gadget and a $(w_x,\pi_c(y))$-gadget, for each $xy \in E(H)$ with $x \in R$ and $y \in S_1 \cup S_2$, with the following property.
        The internal vertices of any two of them are pairwise disjoint and disjoint of $V_{\buf} \cup V_{\flex} \cup X \cup Y \cup W$; similarly, the internal colours of any two of them are pairwise disjoint and disjoint of $\cC_{\buf} \cup \cC_{\flex} \cup \cD$.
    \end{claim}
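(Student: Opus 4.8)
The plan is to build the $2e(H)$ required gadgets greedily, one edge of $H$ at a time, invoking \Cref{lem:gadget_exists} once per gadget. First I would condition on the high-probability event that the conclusion of \Cref{lem:gadget_exists} (with parameter $\nu$) holds for $G'$; crucially this is a single event, after which the construction is entirely deterministic, so no union bound over the iterations is needed. Since $H$ is bipartite with parts $R$ and $S_1 \cup S_2$, every edge of $H$ has the form $xy$ with $x \in R$ and $y \in S_1 \cup S_2$, and $e(H) = 40\abs{R} = 40(2\mu - \zeta)n$; hence in total I must produce $80(2\mu - \zeta)n \le 160\mu n$ gadgets.

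Processing the edges $xy \in E(H)$ in an arbitrary order, suppose I have already built a collection $\mathcal{A}$ of gadgets meeting the required disjointness conditions so far. To handle $xy$, set $V'$ to be $V$ with the internal vertices of all gadgets in $\mathcal{A}$ and the set $V_{\buf} \cup V_{\flex} \cup X \cup Y \cup W$ removed, and set $\cC'$ to be $\cC$ with the internal colours of all gadgets in $\mathcal{A}$ and the set $\cC_{\buf} \cup \cC_{\flex} \cup \cD$ removed; apply \Cref{lem:gadget_exists} with $v = \pi_v(y)$, $c = d_x$ and these $V', \cC'$ to obtain a $(\pi_v(y), d_x)$-gadget with internal vertices in $V'$ and internal colours in $\cC'$, add it to $\mathcal{A}$, and then repeat once more (with the updated $\mathcal{A}$, hence updated $V', \cC'$) with $v = w_x$, $c = \pi_c(y)$ to obtain the $(w_x, \pi_c(y))$-gadget. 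Note that $\pi_v(y) \in V_{\buf} \cup V_{\flex}$, $w_x \in W$, $d_x \in \cD$ and $\pi_c(y) \in \cC_{\buf} \cup \cC_{\flex}$ are the distinguished vertex/colour of their gadget and hence are not internal, so removing these sets from $V'$ and $\cC'$ causes no clash. By construction the internal vertices (resp.\ colours) of every newly added gadget avoid those of all previously chosen gadgets as well as the forbidden sets, so once all edges of $H$ have been processed, $\mathcal{A}$ is exactly the family promised by the claim.

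The one thing that genuinely needs checking is that $\abs{V'} \ge (1-\nu)n$ and $\abs{\cC'} \ge (1-\nu)n$ at every step, which is what keeps \Cref{lem:gadget_exists} applicable. Each gadget contributes $10$ internal vertices and we build at most $160\mu n$ gadgets, so at most $1600\mu n$ vertices are removed because of $\mathcal{A}$; additionally $\abs{V_{\buf}} + \abs{V_{\flex}} + \abs{X} + \abs{Y} + \abs{W} \le 2\mu n + 2r + (2\mu-\zeta)n \le 5\mu n$, using $r = \zeta n/24$ and $\zeta \ll \mu$. Hence $\abs{V'} \ge (1 - 1605\mu)n \ge (1-\nu)n$ since $\mu \ll \nu$. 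The colour count is analogous: each gadget has a bounded number (at most $15$, say) of internal colours, so at most $O(\mu n)$ colours are blocked by $\mathcal{A}$, and $\abs{\cC_{\buf}} + \abs{\cC_{\flex}} + \abs{\cD} \le 4\mu n$, whence $\abs{\cC'} \ge (1 - O(\mu))n \ge (1-\nu)n$, again by $\mu \ll \nu$.

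I do not expect a serious obstacle here: the substance of \Cref{lem:gadget_exists} does all the work, and what remains is the bookkeeping above, i.e.\ confirming that the cumulative footprint of $\Theta(\mu n)$ constant-size gadgets together with the $\Theta(\mu n)$ pre-reserved vertices and colours stays within the $\nu n$ slack the lemma allows --- which is precisely why the hierarchy $\zeta \ll \mu \ll \nu$ was arranged.
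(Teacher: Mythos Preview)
Your proposal is correct and follows essentially the same approach as the paper: a greedy construction of the $2e(H)$ gadgets via \Cref{lem:gadget_exists}, justified by the bookkeeping that the total number of forbidden vertices and colours is $O(\mu n) < \nu n$. The only cosmetic difference is that the paper phrases the greedy step as ``take a maximal collection and derive a contradiction if it is incomplete'', whereas you iterate explicitly; your extra remark that the distinguished vertex/colour of each gadget lies in the excluded sets (and hence causes no clash) is a nice point that the paper leaves implicit.
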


    \begin{proof}
        Let $\mathcal{A}$ be a maximal collection of gadgets as in the statement of the claim and suppose for contradiction $|\mathcal{A}| < 2 |E(H)| = 2 \cdot 40 \cdot |R| = 80 (2\mu - \zeta)n$.
        Let $V_0$ (resp.\ $\cC_0$) be the union of the vertices (resp.\ colours) spanned by the gadgets in $\mathcal{A}$ and those in $V_{\buf} \cup V_{\flex} \cup X \cup Y \cup W$ (resp.\ $\cC_{\buf} \cup \cC_{\flex} \cup \cD$).
        Then $|V_0|,|C_0| = O(\mu n) < \nu n$, where we used $\mu \ll \nu$.
        Hence, by the conclusion of \Cref{lem:gadget_exists} for \(G'\), we can add another gadget to $\mathcal{A}$, contradicting its maximality.
    \end{proof}

    Partition $R$ into $r$ sets $R_1,\dots,R_r$ each of size $\abs{R}/r = k$
    and let $A_i^1,\dots,A_i^{80k}$ 
    be the gadgets for the edges incident to \(R_i\).
    Note that there are precisely $80k$ of them since $d_H(x)=40$ for each $x \in R$, and each edge incident to \(x\) has two associated gadgets.
    We will now connect \(x_i\) to \(y_i\) via short rainbow paths and the gadgets associated to \(R_i\).
    \begin{claim}
    \label{claim:connecting}
        With high probability, \(G'\) contains a collection $\{P_i^1, \dots, P_i^{80k+1}: i \in [r]\}$ of $(80k+1) \cdot r$ rainbow paths of length three such that the following holds.
        All their interior vertices (resp.\ colours) are distinct, and disjoint from $V_{\buf} \cup V_{\flex} \cup X \cup Y \cup W$ (resp.\ $\cC_{\buf} \cup \cC_{\flex} \cup \cD$) and the set of vertices (resp.\ colours) spanned by the gadgets given by \Cref{claim:gadgets}.
        Moreover, for each $i \in [r]$, the path $P_i^1$ starts with $x_i$ and ends with the first vertex of $A_i^1$; the path $P_i^j$ starts with the last vertex of $A_i^{j-1}$ and ends with the first vertex of $A_i^j$, for each $2 \le j \le 80k$; the path $P_i^{80k+1}$ starts with the last vertex of $A_i^{80k}$ and ends with $y_i$.
    \end{claim}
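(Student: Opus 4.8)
The strategy is to build the $(80k+1)\cdot r$ connecting paths \emph{greedily, one at a time}, using \Cref{lem:connecting_gadgets} at each step. We process the paths in some fixed order, say $P_1^1, P_1^2, \dots, P_1^{80k+1}, P_2^1, \dots, P_r^{80k+1}$. Each $P_i^j$ must be a rainbow path of length three connecting two prescribed endpoints (which are vertices of previously-found gadgets from \Cref{claim:gadgets}, or one of $x_i, y_i$), with its two interior vertices and three colours lying outside a ``forbidden'' set that collects: the sets $V_{\buf}\cup V_{\flex}\cup X\cup Y\cup W$ and $\cC_{\buf}\cup\cC_{\flex}\cup\cD$; all vertices and colours spanned by the gadgets of \Cref{claim:gadgets}; and the interior vertices and colours of all connecting paths chosen so far. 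Since each gadget has $11$ vertices and a bounded number of colours, and there are $O(k r) = O(\mu n)$ gadgets, the gadgets contribute $O(\mu n)$ forbidden vertices and colours. Each connecting path contributes only $2$ interior vertices and $3$ colours, and there are $(80k+1)r = O(\mu n)$ of them in total, so they contribute $O(\mu n)$ more. Hence at every step of the greedy process, the set $V'$ of allowed vertices and the set $\cC'$ of allowed colours each have size at least $(1-\nu)n$, using $\mu \ll \nu$.

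With this bookkeeping in place, at each step we wish to apply \Cref{lem:connecting_gadgets} to the current pair of endpoints $u,v \in V$ with the current $V', \cC'$ of size at least $(1-\nu)n$, to obtain a rainbow path of length three with interior in $V'$ and colours in $\cC'$. One subtlety is that \Cref{lem:connecting_gadgets} requires $u \ne v$: but the prescribed endpoints are the ``first vertex'' and ``last vertex'' of distinct gadgets (or one of $x_i,y_i$, which lie in $X\cup Y$, disjoint from everything else), and these are all distinct by the disjointness guarantee of \Cref{claim:gadgets}, so this is automatic. Another subtlety is that \Cref{lem:connecting_gadgets} as stated in the excerpt refers to a quantity $\nu$ in ``size at least $(1-\nu)n$'' while its hypothesis is phrased with parameters $\rho,\lambda$; I will read it with the constant $\nu$ fixed above (which is legitimate since we only need it to hold for our one fixed value of $\nu$, after possibly shrinking $1/C$), so that a single high-probability event — the conclusion of \Cref{lem:connecting_gadgets} for $G'$ — suffices for \emph{all} the greedy steps simultaneously.

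Putting it together: we condition on the high-probability event that the conclusions of \Cref{lem:connecting_gadgets} and \Cref{claim:gadgets} both hold for $G'$ (each holds with high probability, so their intersection does too). On this event, the greedy procedure never gets stuck, since at every one of the $(80k+1)r$ steps the allowed vertex set and colour set have size at least $(1-\nu)n$ and the two endpoints are distinct, so \Cref{lem:connecting_gadgets} supplies the required length-three rainbow path inside $G'$; we then add its interior vertices and colours to the forbidden set and continue. The resulting family $\{P_i^1,\dots,P_i^{80k+1}: i\in[r]\}$ has all the stated properties by construction. The only thing requiring care — and the main (though routine) obstacle — is verifying the size bound $(1-\nu)n$ on the allowed sets at every step, i.e.\ checking that the total number of vertices and colours ever forbidden is $O(\mu n) \le \nu n$; this follows from $|R| = (2\mu-\zeta)n$, the bound $k = 24(2\mu-\zeta)/\zeta$ so that $kr = (2\mu-\zeta)n$, each gadget spanning $11$ vertices and at most a constant number of colours, each connecting path spanning $2$ interior vertices and $3$ colours, and $\mu \ll \nu$.
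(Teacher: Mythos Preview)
Your proposal is correct and follows essentially the same approach as the paper: the paper phrases it as ``take a maximal collection $\cP$ and derive a contradiction if $|\cP| < (80k+1)r$,'' which is just your greedy argument in contrapositive form, with the identical bookkeeping that the forbidden vertices and colours number $O(\mu n) < \nu n$ so that \Cref{lem:connecting_gadgets} applies. Your write-up is in fact more careful than the paper's (you explicitly check $u\neq v$ and flag the parameter mismatch in the statement of \Cref{lem:connecting_gadgets}).
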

    \begin{proof}
        Let $\cP$ be a maximal collection of connecting paths as in the statement of the claim.
        Suppose for contradiction
        $|\cP| < (80k+1) \cdot r$ and let
        $u$ and $v$ be a pair of vertices as in the statement of the claim not connected by any path in \(\cP\).
        Let $V_0$ (resp.\ $\cC_0$) be the union of the vertices (resp.\ colours) spanned by the connecting paths of $\cP$, the gadgets given by \Cref{claim:gadgets} and the vertices in $V_{\buf} \cup V_{\flex} \cup X \cup Y \cup W$ (resp.\ the colours in $\cC_{\buf} \cup \cC_{\flex} \cup \cD$).
        Observe that $|V_0|,|\cC_0| = O(\mu n) < \nu n$, using \(\mu \ll \nu\).
        By the conclusion of \Cref{lem:connecting_gadgets} for \(G'\), there exists a rainbow path of length three with endpoints $u$ and $v$, that avoids $V_0$ and $\cC_0$.
        Therefore we can add another connecting path to $\cP$, contradicting its maximality.
    \end{proof}
    Recall that for each pair $(x_i,y_i)$ we built $80k$ gadgets and $80k+1$ connecting paths.
    As we will see shortly, when using the absorbing structure, for each pair we will use the absorbing paths (each of length $10$, c.f.~\Cref{fig:absorber}) in precisely \(2k\) gadgets, and the avoiding path (each of length $9$, c.f.~\Cref{fig:absorber}) in every other gadget.
    Together with the connecting paths (each of length $3$), we will get a path of length
    \begin{equation} \label{eq:length-xi-yi}
        \ell := 3(80k+1) + 9 (80k-2k) + 10\cdot 2k = 962 k + 3
    \end{equation}
    between \(x_i\) and \(y_i\).
    Let 
    \(\ell' := 28\) and let
    \(F\) be the forest resulting from removing the internal vertices of \(r\) bare paths of length \(\ell + \ell'\) from \(T\).
    Observe that \(T\) does indeed have \(r\) bare paths of length \(\ell+\ell'\), since
    $\ell + \ell'
    = 962 k + 31
    \le 1000 k 
    = 1000 \cdot 24(2\mu-\zeta) \zeta^{-1}
    \le 6\zeta^{-1}$.
    We removed the internal vertices of paths of length $\ell+\ell'$ as opposed to length $\ell$, as this will allow us to cover, using \Cref{lem:flexible_sets}, the leftover vertices and colours after the embedding of \(F\) in \(G\dprm\) via \Cref{thm:rainbow-almost-spanning-embedding}.
	In fact, $\ell'$ has been chosen so that the path length we get from \Cref{lem:flexible_sets} matches the length of the paths removed from \(T\), and \emph{only this choice of \(\ell'\) works}.

    Let $\tilde{V}$ (resp.\ $\tilde{\cC}$) be the set of all vertices (resp.\ colours) used to build the gadgets of \Cref{claim:gadgets} and the connecting paths of \Cref{claim:connecting}.
    Then
    \begin{align*}
            |\tilde{V}| 
            &= 2 e(H) \cdot 10 + \abs{\Vflx} + \abs{\Vbuf} + \abs{W} + \abs{X} + \abs{Y} + (80k+1)\cdot r \cdot 2\\
            &= (962k + 28)r 
    \end{align*}
    and similarly
    \[
        |\tilde{\cols}|
           = 2 e(H) \cdot 9 + \abs{\colsflx} + \abs{\colsbuf} + \abs{D} + (80k+1)\cdot r \cdot 3
           = (962 k + 27)r\, .
    \]
    Moreover, since we removed $(\ell + \ell' -1) \cdot r$ vertices from $T$, we have
    \[
		\abs{V(F)} 
	  = n - r(\ell+\ell' - 1) 
	  = n- (962k+30)r.
	\]

    \begin{claim}
    \label{claim:F-embedding}
        With high probability, \(G\dprm[V\setminus \tilde{V}]\) contains a rainbow copy of \(F\), with  edges in \(E(\rg_2)\) and colours disjoint from \(\tilde{\cols}\).
    \end{claim}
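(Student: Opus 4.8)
The plan is to reduce this to \Cref{thm:rainbow-almost-spanning-embedding}, which was stated precisely for random subgraphs of pseudorandom graphs — this is exactly why the pseudorandom generalisation was needed. First I would observe that everything revealed so far (the sets $V_{\flex}, V_{\buf}, X, Y, W, \cC_{\flex}, \cC_{\buf}, \cD$, the gadgets of \Cref{claim:gadgets} and the connecting paths of \Cref{claim:connecting}) depends only on $G'$, hence only on edges of $\rg_1 \cup G_0$ lying in $E(\rg_3)$; since $G''$ is edge-disjoint from $G'$ and $\rg_2$ is independent of $\rg_1$ and $\rg_3$, the random subgraph of $G''$ with edges in $E(\rg_2)$ is still "fresh", and moreover its colouring has not been looked at, so it is uniform in the palette $\cC$. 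The work is then: (i) cut down to the vertex set $V \setminus \tilde V$, (ii) cut down to the colour set $\cC \setminus \tilde\cC$, and (iii) check the hypotheses of \Cref{thm:rainbow-almost-spanning-embedding}.

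For (i): by the size computations above, $|\tilde V| = (962k+28)r \le 1000 k r \le 6\zeta^{-1} \cdot \zeta n /24 \le n/4 \cdot \text{(small)}$ — more carefully $kr = (2\mu-\zeta)n \le 2\mu n$, so $|\tilde V| \le 1000 \cdot 2\mu n = 2000\mu n$, which is at most $0.01 n$ since $\mu \ll 1$. Hence $|V \setminus \tilde V| \ge 0.99 n$, and by item \ref{itm:pseudorandom} the induced subgraph $G''[V \setminus \tilde V]$ is pseudorandom on $n' := |V\setminus\tilde V|$ vertices (using $250 \cdot 0.99 n \ge 240 n$, as noted right after \ref{itm:pseudorandom}). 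For (ii): similarly $|\tilde\cC| = (962k+27)r \le 2000\mu n$, so $|\cC \setminus \tilde\cC| \ge 0.99 n \ge n'$; pick a subset $\cC^\star \subseteq \cC \setminus \tilde\cC$ of size exactly $n'$. Now let $\Gamma$ be the subgraph of $G''[V\setminus\tilde V]$ consisting of those edges that lie in $E(\rg_2)$ and receive a colour in $\cC^\star$. Conditioning on the (already-revealed) structure, each edge $e$ of $G''[V\setminus\tilde V]$ lies in $\Gamma$ independently with probability $\frac{C'}{n}\cdot\frac{|\cC^\star|}{n-1} \ge \frac{C'}{n}\cdot 0.99 \ge \frac{C''}{n'}$ for a suitable constant $C'' \ge C'/3 \ge C/6$, and conditioned on lying in $\Gamma$ its colour is uniform in $\cC^\star$. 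Thus $\Gamma$ is distributed as $\big(G''[V\setminus\tilde V]\big)_{C''/n'}$ with a uniform $n'$-colouring, on a pseudorandom host graph.

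For (iii): $F$ is a forest (subgraph of the tree $T$) on $|V(F)| = n - (962k+30)r$ vertices with maximum degree at most $d$; since $|V(F)| \le n' - r \le (1-\eps_0)n'$ for some fixed $\eps_0 > 0$ (as $r = \zeta n/24 = \Omega(n)$ while $n' \ge 0.99n$), we may apply \Cref{thm:rainbow-almost-spanning-embedding} to each component and, component by component on disjoint vertex/colour sets — or more cleanly, add a dummy edge set to make $F$ a tree on $|V(F)|$ vertices of maximum degree $d$, which is still at most $(1-\eps_0)n'$ vertices — to conclude that with high probability $\Gamma$, hence $G''[V\setminus\tilde V]$ with edges in $E(\rg_2)$ and colours in $\cC^\star \subseteq \cC\setminus\tilde\cC$, contains a rainbow copy of $F$. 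Finally, since $E(G'') \cap (E(\rg_3)\cup E(\rg_1)) = \emptyset$ and the rainbow copy uses only edges of $\rg_2$, it is indeed a rainbow copy of $F$ inside $G'' = G\dprm$ on $V \setminus \tilde V$ with edges in $E(\rg_2)$ and colours disjoint from $\tilde\cC$, as required.

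The main obstacle is bookkeeping the independence: one must be careful that conditioning on the outcomes of \Cref{lem:flexible_sets,lem:gadget_exists,lem:connecting_gadgets} and on the gadget/connecting-path constructions (all functions of $\rg_3$, $\rg_1$ and the colours of $G'$-edges) does not disturb the distribution of the pair (edge set of $\rg_2$ restricted to $G''$, colouring of those edges), which needs $\rg_2$ independent of $\rg_1,\rg_3$ and $G''$ edge-disjoint from $G'$ — both of which were arranged in the setup — plus the fact that colours of $G''$-edges are untouched so far. Everything else is routine size estimation using $\zeta \ll \mu \ll \nu \ll \delta, 1/d$ and $1/C \ll \zeta$.
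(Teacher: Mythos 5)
Your proposal is correct and is essentially the paper's proof: pass to $V_0 := V\setminus\tilde V$, pick a colour subset of $\cC\setminus\tilde\cC$ of size $|V_0|$, note $G''[V_0]$ is pseudorandom by item~\ref{itm:pseudorandom}, restrict to edges of $\rg_2$ with a colour in that subset (whose colours are fresh since only $G'\subseteq\rg_3$ has been revealed), and apply \Cref{thm:rainbow-almost-spanning-embedding}. The only point where you go beyond the paper is in spelling out the forest-versus-tree issue (connecting the $r+1$ components of $F$ through leaves into a tree of the same vertex set and maximum degree $\le d$), which the paper leaves implicit; both treatments are fine.
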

    \begin{proof}
		Let $\cols_0$ be a subset of $\cols \setminus \tilde{\cols}$ of size $|V \setminus \tilde{V}|$ (such a set exists as $|\cC \setminus \tilde{\cols}| \ge |V \setminus \tilde{V}|$), and write $V_0:= V \setminus \tilde{V}$.
		Let $H := G''[V_0]$. By \ref{itm:pseudorandom}, $H$ is pseudorandom. Let $H'$ be the subgraph of $H$, consisting of edges that are in $\rg_2$ and are coloured in $\cols_0$. This means that each edge of $H$ is included in $H'$ with probability $p = (C'/n) \cdot |\cols_0|/|\cols|$.
        Now reveal the colouring of \(H'\), and observe that it is distributed uniformly at random in $\cols_0$.
        Moreover we have $|V(F)| = |V_0| - 2r = |V_0| - \zeta n / 12 \le (1 - \zeta/12)|V_0|$ and $|\cols_0| = |V_0|$.
        Hence, we can apply
        \Cref{thm:rainbow-almost-spanning-embedding} to $H'$ and get that, with high probability,
        \(H'\) contains a rainbow copy of 
        \(F\).
    \end{proof}

	Fix an embedding of $F$ in $G''$.
    Let 
    \(V'\) and
    \(\cols'\)
    be the vertices and colours not in $\tilde{V}$ and $\tilde{\cC}$ and not used for the embedding of $F$.
    Then $\abs{V'} = n-|\tilde{V}|-\abs{V(F)}=2r$ and $\abs{\cols'} = (n-1)-|\tilde{\cC}|-\left(\abs{V(F)}-r-1\right) = 4r$, where we used that $F$ is a forest
    with $r+1$ components.
    Thus we can write $V'=\{v_i^1, v_i^2: i \in [r]\}$ and
    \(\cols' = \{ c_i^{11}, c_i^{12}, c_i^{21}, c_i^{22}: i \in [r]\}\).
    Let $x_i'$ and $y_i'$ be the embedded endpoints of the $i$-th bare path of $T$.
    
    \begin{claim} \label{claim:cover-leftover}
        With high probability, there is a collection of pairwise vertex- and colour-disjoint rainbow paths \(\{ Q_i^{11}, Q_i^{12},Q_i^{21},Q_i^{22}: i\in [r]\}\) such that
        \begin{itemize}
            \item the ends of $Q_i^{11}$ are $x_i',v_i^1$, of $Q_i^{12}$ are $v_i^1, x_i$, of $Q_i^{21}$ are $y_i',v_i^2$, and of $Q_i^{22}$ are $v_i^2,y_i$;
            \item each path has length \(7\) and all its $6$ internal vertices in $V_{\flex}$;
            \item for each $s,t \in \{1,2\}$, the path \(Q_i^{st}\) has colours in $\cC_{\flex} \cup \{c_i^{st}\}$ and uses the colour $c_i^{st}$.
        \end{itemize}
    \end{claim}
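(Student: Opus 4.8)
The plan is to produce the $4r$ paths greedily, one after another, each time invoking the conclusion of \Cref{lem:flexible_sets} for $G'$. List the demands to be met as the $4r$ triples $(x_i',v_i^1,c_i^{11})$, $(v_i^1,x_i,c_i^{12})$, $(y_i',v_i^2,c_i^{21})$ and $(v_i^2,y_i,c_i^{22})$ over $i\in[r]$, fix an arbitrary order on them, and process them in turn, maintaining the collection of interior vertices of $V_{\flex}$ and of colours of $\cC_{\flex}$ used so far.

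Before starting, record that the relevant vertices and colours are untouched by the earlier stages. Since $V_{\flex}\subseteq\tilde V$ and $\cC_{\flex}\subseteq\tilde{\cC}$, neither meets the image of $F$ nor the colours used on it; by \Cref{claim:gadgets,claim:connecting} they are also disjoint from the interiors and the colours of all gadgets and connecting paths. Moreover every prescribed endpoint --- $x_i',y_i'\in V(F)$, $v_i^1,v_i^2\in V'$, $x_i\in X$ and $y_i\in Y$ --- lies outside $V_{\flex}$, while the colours $c_i^{st}\in\cC'$ lie outside $\cC_{\flex}$ and are pairwise distinct.

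To meet a triple $(u,v,c)$, apply \Cref{lem:flexible_sets} to $G'$ with $V_{\flex}'$ the set of still-unused flexible vertices and $\cC_{\flex}'$ the set of still-unused flexible colours. At most $4r$ triples are handled in total and each consumes exactly six flexible vertices and six flexible colours (a rainbow path of length $7$ has six interior vertices and seven colours, one of which is the prescribed $c$); hence at every step fewer than $24r=\zeta n\le\nu n$ flexible vertices and fewer than $\zeta n\le\nu n$ flexible colours have been used, so $|V_{\flex}'|,|\cC_{\flex}'|\ge(\mu-\nu)n$ and the lemma applies. It returns a rainbow path of length $7$ in $G'$ with endpoints $u,v$, interior vertices in $V_{\flex}'$, colours in $\cC_{\flex}'\cup\{c\}$ and containing $c$; take it as the current $Q_i^{st}$ and mark its six interior vertices and six flexible colours as used. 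After all $4r$ rounds, each $Q_i^{st}$ has length $7$ with its six interior vertices in $V_{\flex}$, uses $c_i^{st}$ together only with colours of $\cC_{\flex}$, and has the required endpoints; the paths are pairwise internally disjoint and colour-disjoint because the used sets only grow, the only repeated endpoints ($v_i^1$ shared by $Q_i^{11},Q_i^{12}$ and $v_i^2$ shared by $Q_i^{21},Q_i^{22}$) are the intended ones, and disjointness from the gadgets, the connecting paths and the copy of $F$ is automatic from the second paragraph.

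The argument is essentially bookkeeping; the real content is \Cref{lem:flexible_sets}, which we may assume. The one point requiring care is keeping that lemma applicable across all $4r$ rounds, i.e.\ that the cumulative demand $24r=\zeta n$ on each flexible pool never exceeds the permitted slack $\nu n$ --- this is precisely where the hierarchy $\zeta\ll\nu$ is used. It is also worth noting, though it causes no difficulty, that although $x_i'$ and $y_i'$ were produced by an embedding carried out inside $G''$, \Cref{lem:flexible_sets} applied to $G'$ accepts arbitrary vertices of $V$ as endpoints, so there is no conflict between working in $G'$ here and in $G''$ for the embedding of $F$.
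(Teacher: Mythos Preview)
Your proof is correct and follows the same approach as the paper: a greedy construction, invoking \Cref{lem:flexible_sets} at each of the $4r$ steps and observing that at most $24r=\zeta n$ flexible vertices and colours are ever consumed, so the slack condition of the lemma is maintained throughout. Your write-up is more explicit than the paper's in checking that the prescribed endpoints lie outside $V_{\flex}$ and that the shared endpoints $v_i^1,v_i^2$ are the only overlaps, but the substance is identical.
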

    \begin{proof}
        The collection can be found greedily using \Cref{lem:flexible_sets}.
        Indeed suppose we are not done yet and we still need to make a connection, say from $x_i'$ to $v_i^1$.
        Let \(V_0 \subseteq \Vflx, \cols_0\subseteq \colsflx\) be the vertices and colours already used.
        Then, using \(r = \zeta n /24\), we have
        \(\abs{V_0} = \abs{\cols_0} <  24r=\zeta n\).
        Therefore, we can apply the conclusion of \Cref{lem:flexible_sets} and find a rainbow path $Q_i^{ii}$ of length 7 with endpoints $x_i'$ and $v_i^1$, internal vertices in $V_{\flex} \setminus V_0$ and colours in $(\cC_{\flex} \setminus C_0) \cup \{c_i^{11}\}$, that uses the colour $c_i^{11}$.
        Therefore the claim holds.
    \end{proof}

    Let \(\Vflx', \colsflx'\) be the vertices in \(\Vflx\) and colours in \(\colsflx\) used in the paths in \Cref{claim:cover-leftover}.
    Then \(\abs{\Vflx'} = \abs{\colsflx'} = 24r = \zeta n\).

    Let \(H_1 = H - \pi_v^{-1}\left( \Vflx'\right)\) 
    and \(H_2 = H - \pi_v^{-1} \left( \colsflx' \right)\).
    Then by choice of $H$ according to \Cref{lem:template}, \(H_1\) and \(H_2\) have perfect matchings \(M_1\) and \(M_2\) respectively.
    Recall that each pair \((x_i, y_i)\) is associated with the gadgets \(A_i^1, \hdots, A_i^{80k}\), two for every edge incident to each \(x\in R_i\).
    For \(xy \in E(H_1)\) with $x \in R$ and $y \in S_1 \cup S_2$, let \(P_{M_1}(xy)\) be the absorbing path of the \((\pi_v(y), d_x)\)-gadget if \(xy \in E(M_1)\), and the avoiding path otherwise.
    Similarly, for \(xy \in E(H_2)\) with $x \in R$ and $y \in S_1 \cup S_2$, let \(P_{M_2}(xy)\) be the absorbing path of the \((w_x, \pi_c(y)\)-gadget if \(xy \in E(M_2)\), and the avoiding path otherwise.
    Then define
    \[
    P_i:=   \bigcup_{j\in [80k+1]}\, P_i^{j}\quad \cup 
          \bigcup_{xy \in E(H_1):\, x\in R_i} P_{M_1}(xy) \quad \cup
          \bigcup_{xy \in E(H_2):\, x\in R_i} P_{M_2}(xy)\, ,
    \]
    and observe that $P_i$ is a path with endpoints $x_i$ and $y_i$. 
    Each \(x\in R_i\) lies in exactly one edge of \(M_1\) and exactly one edge of \(M_2\), so $P_i$ consists of \(2 |R_i| = 2k\) absorbing paths (of length $10$), \(80k -2k\) avoiding paths (of length $9$), and $80k+1$ connecting paths (of length $3$).
    Hence, \(P_i\) has length \(\ell\) (c.f.\ \Cref{eq:length-xi-yi}).
    Finally define
    \[
    P_i' := (Q_i^{11} \cup Q_i^{12}) \cup P_i \cup (Q_i^{21} \cup Q_i^{22})\, , 
    \]
    and observe that $P_i'$ is path with endpoints $x_i'$ and $y_i'$ and has length \(\ell + 4\cdot 7 = \ell +\ell'\).
    By construction, the paths \(P_i'\) are rainbow, pairwise vertex- and colour-disjoint, and use no vertex or colour in the embedding of \(F\).
    
    Recall that we obtained $F$ from $T$ by removing the internal vertices of $r$ bare paths of length $\ell+\ell'$ and that $x_i'$ and $y_i'$ were the images of the endpoints of the $i$-th bare path.
    Therefore the image of $F$ together with \(\bigcup_{i=1}^{r} P_i'\) gives a rainbow embedding of \(T\) in $G$, as desired.
\end{proof}

\section{Concluding remarks}
\label{sec:concluding}
    In this paper we studied the problem of finding rainbow subgraphs of uniformly coloured randomly perturbed graphs.
    First, we gave a general result (\Cref{thm:mc_diarmid_argument}) applicable when the number of colours is asymptotically optimal.
    Then, we gave a result concerning rainbow bounded-degree spanning trees when the number of colours is exactly optimal (\Cref{thm:main}).
    We showed that any given bounded-degree spanning tree typically appears when a linear number of random edges are added to any dense graph, and all the edges are uniformly coloured with colours in $[n-1]$.
    It would be interesting to improve our result to a \emph{universality} statement: namely, is it true that a uniformly coloured randomly perturbed graph with colours in $[n-1]$ typically contains a rainbow copy of every bounded-degree spanning tree at once?
    The uncoloured universality question was already considered by B\"{o}ttcher, Han, Kohayakawa, Montgomery, Parczyk and Person~\cite{tree_universality}, who proved that for every $\alpha \in (0,1)$ and $d \in \mathbb{N}$ there exists $C=C(\alpha,d)>0$ such that, if $G_0$ is an $n$-vertex graph with $\delta(G_0) \ge \alpha n$, then with high probability $G_0 \cup \rg(n,C/n)$ contains every $n$-vertex tree $T$ with $\Delta(T) \le d$.
    We remark that the edge density is optimal (up to a constant factor).

\bibliographystyle{amsplain} 
\bibliography{bibliography}

\appendix 

\section{Appendix - Proof of \Cref{prop:sample-pseudorandom-tree}} \label{appendix}

	The proof of \Cref{prop:sample-pseudorandom-tree} uses the following result of Alon, Krivelevich and Sudakov~\cite[Theorem 1.4]{AKS}, for which we need the following definition.
	Given two positive numbers $c$ and $\alpha<1$, a graph $G$ is called an \emph{$(\alpha,c)$-expander} if every subset of vertices $X \subseteq V(G)$ with $|X|\le \alpha |V(G)|$ satisfies $|N_G(X)| \ge c |X|$.

	\begin{theorem}[Theorem 1.4 in~\cite{AKS}]
	\label{thm:AKS_expander_contains_trees}
		Let $d \ge 2$ be an integer and $0 < \eps <1/2$.
		Then for $n$ large enough the following holds.
		Let $G$ be a graph on $n$ vertices of minimum degree $\delta$ and maximum degree $\Delta$ such that, with $K := \frac{20d^2 \log(2/\eps)}{\eps}$, we have
		\begin{enumerate}
			\item \label{eq:AKS_condition1} $\Delta^2 \le \frac{1}{K}\exp\left(\frac{\delta}{8K}-1\right)$, and
			\item \label{eq:AKS_condition2} every induced subgraph $G_0$ of $G$ with minimum degree at least $\frac{\delta}{2K}$ is a $\left(\frac{1}{2d+2},d+1\right)$-expander.
		\end{enumerate}
		Then $G$ contains a copy of every tree $T$ on at most $(1-\eps)n$ vertices of maximum degree $d$.
	\end{theorem}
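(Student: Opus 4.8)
The plan is to embed $T$ into $G$ greedily, one breadth-first layer at a time, using the $\eps n$ unused vertices as slack and extracting everything from hypothesis~\ref{eq:AKS_condition2}. Root $T$ at an arbitrary vertex and let $L_0,L_1,\dots$ be its BFS layers; since $\Delta(T)\le d$ we have $|L_{i+1}|\le d\,|L_i|$, while $\sum_i|L_i|=|V(T)|\le(1-\eps)n$. Suppose an embedding $\phi$ of $L_0\cup\dots\cup L_i$ has been built, and write $U:=\phi(L_0\cup\dots\cup L_i)$ for the used set, so $|V(G)\setminus U|\ge\eps n$. To pass to layer $i+1$ I must choose, for each $v\in L_i$, a set of $c_v:=(\text{number of children of }v)\le d-1$ distinct unused $G$-neighbours of $\phi(v)$, these sets pairwise disjoint over $v\in L_i$; they become $\phi$ on $L_{i+1}$.

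For the extension step I would first clean the leftover: let $W\subseteq V(G)\setminus U$ be obtained from $G[V(G)\setminus U]$ by repeatedly deleting vertices whose current degree is below $\delta/(2K)$. By~\ref{eq:AKS_condition2} the graph $G[W]$ is a $(\tfrac1{2d+2},d+1)$-expander, hence has minimum degree at least $d+1$; and the role of~\ref{eq:AKS_condition1}, of the precise shape $\Delta^2\le\tfrac1K e^{\delta/(8K)-1}$, is to be exactly the threshold under which a short union-bound/first-moment computation shows that (a) $|W|\ge\tfrac\eps2 n$ at every stage and (b) each image $\phi(v)$ with $v\in L_i$ still has many (say $\ge d$) neighbours in $W$ --- an invariant that is installed when $\phi(v)$ is first chosen and survives because only $|L_{i+1}|\le d\,|L_i|$ further vertices are used before layer $i+1$ is embedded. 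Granting (a) and (b), form the bipartite graph $B$ on parts $\phi(L_i)$ and $W$ in which $\phi(v)$ is joined to its $G$-neighbours lying in $W$, and seek a subgraph of $B$ giving each $\phi(v)$ exactly $c_v$ private vertices of $W$. Viewing this as a matching into $W$ after replacing each $\phi(v)$ by $c_v$ copies, the deficiency form of Hall's theorem says it exists provided $|N_B(F)|\ge\sum_{\phi(v)\in F}c_v$ for every $F\subseteq\phi(L_i)$, and since $c_v\le d-1$ it is enough to verify $|N_B(F)|\ge d\,|F|$. For $|F|$ at most a small constant fraction of $|W|$ this is delivered by the $(\tfrac1{2d+2},d+1)$-expansion of $G[W]$ together with invariant (b); for larger $F$ it holds because $W$ is by then almost all of the leftover, so $N_B(F)$ is forced to be nearly all of $W$.

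Iterating this extension step never gets stuck and ends with a copy of $T$, which proves the statement. The main obstacle is precisely the maintenance of the cleaned leftover $W$ and of invariant (b): condition~\ref{eq:AKS_condition2} is designed to give expansion --- and hence Hall's condition --- only \emph{after} the low-degree vertices have been stripped off, while condition~\ref{eq:AKS_condition1} is the exact bound needed to keep the number of stripped vertices (and hence the size of $W$, and the number of $W$-neighbours of each frontier image) under control throughout; the constant $K=\tfrac{20d^2\log(2/\eps)}\eps$ is calibrated so that these estimates just close, leaving the $\eps n$ slack that lets the final layers be embedded. A secondary complication, worth flagging, is that BFS layers can themselves be a constant fraction of $n$, so the large-$F$ case of the Hall check has to be argued from $|W|$ being almost all of the leftover rather than from raw expansion. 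This is the tree-embedding machinery of Alon, Krivelevich and Sudakov.
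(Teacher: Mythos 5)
You should first note that the paper itself does not prove this statement: it is quoted verbatim as Theorem~1.4 of Alon, Krivelevich and Sudakov and used as a black box in the appendix, so your proposal has to stand on its own as a proof of that theorem, and as written it has genuine gaps. The central one is the Hall-condition step. The second hypothesis gives expansion \emph{inside} induced subgraphs of minimum degree at least $\delta/(2K)$; it says nothing about the bipartite graph $B$ between the frontier $\phi(L_i)$ (which lies in the used set $U$, not in $W$) and the cleaned leftover $W$. Your invariant (b), that each frontier image keeps at least $d$ neighbours in $W$, cannot yield $|N_B(F)|\ge d\,|F|$ for all $F$ --- all frontier images could share the same $d$ neighbours --- and its maintenance is not established either: between the moment $\phi(v)$ is chosen and the moment its children are placed, up to $|L_i|+|L_{i+1}|$ vertices (possibly a constant fraction of $n$) are removed from $W$, so ``only $d\,|L_i|$ further vertices are used'' does not protect a reserve of only $d$ neighbours. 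The large-$F$ case is likewise only asserted: the minimum degree of $G[W]$ concerns edges inside $W$ and forces no edges at all from $W$ to $\phi(L_i)$, so there is no reason $N_B(F)$ should be nearly all of $W$.

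Second, the way you invoke the first hypothesis is not meaningful: the theorem is entirely deterministic, so there is no probability space in which to run ``a short union-bound/first-moment computation''. In the actual argument the inequality $\Delta^2 \le \frac1K \exp\left(\frac{\delta}{8K}-1\right)$ enters a purely deterministic counting argument that bounds how many vertices are discarded when the leftover is repeatedly stripped of vertices of degree below $\delta/(2K)$; this is the technical heart of the result and is absent from your sketch. Finally, the embedding mechanism itself needs more than a per-layer Hall matching: expansion applies only to sets of size at most $|W|/(2d+2)$, BFS layers of $T$ can have linear size, and embedding a whole layer greedily preserves no expansion-type invariant for later layers. The known route (and the one followed in AKS) is to cut $T$ into small pieces and embed them one at a time into the re-cleaned leftover using Friedman--Pippenger/Haxell-type tree-embedding machinery, which maintains a Hall-type condition for \emph{all} small sets simultaneously, combined with the cleaning lemma powered by the first hypothesis. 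Without these ingredients the argument does not close.
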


	Using \Cref{thm:AKS_expander_contains_trees}, the proof of \Cref{prop:sample-pseudorandom-tree} reduces to verifying that the conditions of \Cref{thm:AKS_expander_contains_trees} hold with high probability for a $C/n$-random subgraph of a pseudorandom graph $G$.
	The argument and calculations follow very closely those for the proof of Theorem 1.1 in~\cite{AKS}, where this was verified for \(\gnp{n}{C/n}\), which can be seen as the \(C/n\)-random subgraph of the complete graph on $n$ vertices.

	We first show that a random subgraph of a pseudorandom graph with high probability contains a nearly spanning subgraph with good local expansion property.

	\begin{lemma}[{c.f.\ \cite[Lemma 3.1]{AKS}}]\label{lemma:similar-to-AKS}
		Let $1/C \ll \eps, 1/d$ and set \(\theta := 0.01\eps\) and \(D := C/10\). 
		Let $G$ be a pseudorandom graph on $n$ vertices and $p:=C/n$.
		Then, with high probability, \(G_p\) contains a subgraph \(G^\ast\) that satisfies the following.
		\begin{enumerate}
			\item \(\abs{V(G^\ast)}\ge (1-\theta)n\);
			\item \(D \le \deg_{G^\ast}(v) \le 25 D\), for all \(v \in V(G^\ast)\);
			\item Every induced subgraph  \(G_0\) of \(G^\ast\) with \(\delta(G_0) \ge 100d \log D\) is a \((\frac 1 {2d+2}, d+1)\)-expander.
		\end{enumerate}
	\end{lemma}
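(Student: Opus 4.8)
The plan is to follow the structure of the Alon--Krivelevich--Sudakov argument (Lemma 3.1 in~\cite{AKS}), replacing the use of the complete host graph by the pseudorandomness hypothesis on $G$. First I would set $D := C/10$ and $\theta := 0.01\eps$, and pass to a subgraph of $G_p$ by iteratively deleting low-degree vertices. Concretely, I would repeatedly remove any vertex of current degree less than $D$; let $G^\ast$ be the resulting subgraph. The key point is to bound the number of deleted vertices. If at some stage a set $U$ of $\theta n$ vertices has been removed, then by construction the number of edges of $G_p$ incident to $U$ but inside the already-processed graph is less than $D|U|$, so in particular $G_p$ has fewer than $D\theta n$ edges inside $U\cup(\text{its removal neighbourhood})$; comparing with the expected number of edges of $G_p$ induced on a set of size $\Theta(\theta n)$, which is $\sim p\cdot e_G(\cdot)/1 \ge (C/n)\cdot(\theta n)^2/6$ by pseudorandomness (taking $A,B$ to be a balanced bipartition of that set, valid since $(\theta n/2)^2 \ge 250n$ for $C$ large), a Chernoff bound over the at most $2^n$ choices of $U$ shows this is exponentially unlikely provided $C$ is large enough in terms of $\eps$ and $d$. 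Hence with high probability $|V(G^\ast)| \ge (1-\theta)n$, giving item~(1), and $\delta(G^\ast)\ge D$ by construction.

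For the upper bound on degrees in item~(2), I would simply note that with high probability every vertex $v$ of $G$ has $\deg_{G_p}(v) \le 25D$: indeed $\deg_G(v) \le n$, so $\deg_{G_p}(v)$ is stochastically dominated by $\mathrm{Bin}(n, C/n)$, which has mean $C = 10D$, and a Chernoff bound plus a union bound over the $n$ vertices gives concentration below $25D$ with probability $1 - e^{-\Omega(n)}$ (here one needs $C$ large only in the mild sense that $25D - 10D = \Omega(D)$, which is automatic). Since $G^\ast \subseteq G_p$, the same upper bound holds in $G^\ast$, completing item~(2).

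The substantive step is item~(3), the expansion property. Here I would argue that with high probability $G_p$ has the following property: for every set $X$ with $|X| \le \tfrac{1}{2d+2}\,n$, writing $X' := X \cup N_{G_p}(X)$, if $|X'| < (d+2)|X|$ then there is a subset $Y \subseteq V \setminus X'$ such that $G_p$ has no edges between $X$ and $Y$ while $e_G(X,Y)$ is large by pseudorandomness --- specifically, taking $|X| = x$ and $Y = V\setminus X'$ of size at least $n - (d+2)x \ge n/2$ (using $x \le n/(2d+2)$), pseudorandomness gives $e_G(X,Y) \ge x\cdot(n/2)/3 = xn/6$ whenever $x(n/2)\ge 250n$, i.e.\ $x \ge 500$; the probability that all these $\ge xn/6$ edges of $G$ are absent from $G_p$ is at most $(1-p)^{xn/6} \le e^{-Cx/6}$, and $\binom{n}{x}\binom{n}{(d+1)x}$, the number of ways to choose $X$ and the deficient neighbourhood, is at most $e^{O(dx\log(n/x))}$, so the union bound over $500 \le x \le n/(2d+2)$ closes provided $C \ge C(d)$. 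For the finitely many small sets $x < 500$ one instead uses that with high probability $G_p$ has no such set with fewer than $(d+1)x$ external neighbours, which follows from a direct first-moment computation as in~\cite{AKS} (a set of $\le 500$ vertices spanning or dominating fewer than $500(d+2)$ vertices with an excess of internal or short-range edges is exponentially rare since $G_p$ is locally sparse). Passing from "$G_p$ expands" to "every subgraph of $G^\ast$ with minimum degree $\ge 100d\log D$ expands" is then the standard observation that if an induced subgraph $G_0 \subseteq G^\ast$ has a set $X$ with $|N_{G_0}(X)| < (d+1)|X|$, then deleting at most $d\log D$ neighbours would not matter, and one checks $100d\log D$ comfortably exceeds the number of edges $G_p$ places from $X$ to its "bad" neighbourhood --- exactly as in the proof of~\cite[Lemma 3.1]{AKS}.

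The main obstacle, and the only place where genuine care (rather than transcription from~\cite{AKS}) is needed, is in the expansion step: one must ensure that every application of the pseudorandomness hypothesis is made with sets $A, B$ satisfying $|A|\,|B| \ge 250n$, which forces a separate, elementary treatment of constantly-many small vertex sets, and one must verify that the constant $1/3$ in the definition of pseudorandomness (rather than the $1/4$ one would get from $\gnp{n}{1/2}$) is still comfortably large enough for all the Chernoff/union-bound budgets to balance once $C$ is taken sufficiently large in terms of $\eps$ and $d$. Given item~(3), items~(1) and~(2) of the lemma are then exactly what is quoted, and the resulting $G^\ast$ feeds directly into \Cref{thm:AKS_expander_contains_trees} to yield \Cref{prop:sample-pseudorandom-tree}.
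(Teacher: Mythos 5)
Your proposal contains a genuine gap in the argument for the degree upper bound in item~(2). You claim that, with high probability, \emph{every} vertex $v$ satisfies $\deg_{G_p}(v) \le 25D$, by Chernoff and a union bound over the $n$ vertices, asserting a failure probability of $e^{-\Omega(n)}$. This is incorrect: since $\deg_{G_p}(v)$ is (dominated by) a $\mathrm{Bin}(n, C/n)$ variable with \emph{constant} mean $C = 10D$, the tail $\PP[\deg_{G_p}(v) > 25D]$ is only $e^{-\Theta(C)}$, a constant bounded away from $0$ and independent of $n$. The union bound therefore gives a bound of order $n\,e^{-\Theta(C)} \to \infty$, which is vacuous. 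In fact the claim itself is false: already when $G = K_n$, the maximum degree of $G(n, C/n)$ is $\Theta(\log n/\log\log n)$ with high probability, which exceeds any constant $25D$. So one cannot hope to bound all degrees by $25D$.

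The paper handles this differently, and this is essential to the argument. It lets $X$ be the $\theta n/2$ vertices of \emph{largest} degree in $G_p$ and uses the pseudorandom edge counts (the two parts of the auxiliary \Cref{lemma:simple-edge-bounds}) to bound $e_{G_p}(X,X)$ and $e_{G_p}(X, V\setminus X)$, deducing that $\sum_{v\in X}\deg_{G_p}(v) \le 25D|X|$ and hence that some vertex of $X$ has degree $\le 25D$. Since $X$ consists of the highest-degree vertices, this forces \emph{all} vertices outside $X$ to have degree at most $25D$; in other words there are at most $\theta n/2$ high-degree vertices. These are deleted first, and only then does the greedy removal of vertices of degree below $D$ take place (in that order, so that deletions never create new high-degree vertices); the number of low-degree removals is then bounded by $\theta n/2$ using another pseudorandom edge-count contradiction. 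Your proposal omits the high-degree deletion step entirely and orders the construction so that it would not give item~(2). Your sketch of item~(3) is directionally similar to the paper's two-case analysis (small $t$ vs.\ large $t$) but underspecified in how it uses the minimum degree hypothesis for small sets, whereas the paper makes the crucial observation that the minimum degree condition on the induced subgraph forces $e_{G_p}(X, Y) \ge 50dt\log D$ in the small-$t$ case; you should tie the small-set treatment explicitly to that bound.
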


	Then we state easy facts about random subgraphs of pseudorandom graphs.

		\begin{lemma}[{c.f.\ \cite[Proposition 3.2]{AKS}}] \label{lemma:simple-edge-bounds}
		Let $G$ be a pseudorandom graph on $n$ vertices and $p =p(n) \in (0,1)$ such that $np>20$. With high probability the following holds for $G_p$.
		\begin{enumerate}
			\item \label{itm:simple-a}
				For any disjoint vertex subsets \(A\) and \(B\) with \(\abs{A}=a\), \(\abs{B}=b\) and \(a b p \ge 250 n\), the number of edges between them is at least \( abp/4  \) and at most \(3 abp/2\).
			\item \label{itm:simple-b}
				Every subset of vertices \(A\) of size \(a \le n/4\) spans at most \(apn/2\) edges.
		\end{enumerate}
		\end{lemma}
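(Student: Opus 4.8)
The plan is to prove both parts by a union bound over vertex subsets, after conditioning on the fixed pseudorandom graph $G$; then, for disjoint $A,B\subseteq V(G)$, the quantity $e_{G_p}(A,B)$ is a sum of $e_G(A,B)$ independent $\mathrm{Bernoulli}(p)$ indicators, $e_{G_p}(A)$ is a sum of $e_G(A)$ such indicators, and Chernoff's bound applies directly. Part (i) uses pseudorandomness, while part (ii) uses only the hypothesis $np>20$ and works for any $n$-vertex graph.

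\emph{Part (i).} Fix disjoint $A,B$ with $|A|=a$, $|B|=b$ and $abp\ge 250n$; since $p\le 1$ this forces $ab\ge 250n$, so \Cref{def:pseudorandom} gives $ab/3\le e_G(A,B)\le ab$ and hence $\mu:=\EE[e_{G_p}(A,B)]=p\,e_G(A,B)\in[abp/3,\,abp]$. Applying Chernoff's bound with $\delta=1/4$ to the lower tail and with $\delta=1/2$ to the upper tail, we obtain that, with probability at least $1-4\exp(-\mu/48)\ge 1-4\exp(-abp/144)\ge 1-4\exp(-250n/144)$,
\[
	abp/4\le (1-\tfrac14)\mu\le e_{G_p}(A,B)\le(1+\tfrac12)\mu\le 3abp/2 .
\]
There are at most $3^n$ ordered pairs of disjoint subsets of $V(G)$, and $250/144>\ln 3$, so a union bound over all valid pairs shows that with high probability the displayed bounds hold for every such $A,B$ simultaneously.

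\emph{Part (ii).} Fix $A$ with $|A|=a\le n/4$ and set $t:=\lceil apn/2\rceil$. Since $e_{G_p}(A)$ is stochastically dominated by $\mathrm{Bin}\big(\binom{a}{2},p\big)$, either $t>\binom{a}{2}$, in which case the event $e_{G_p}(A)\ge t$ is impossible, or
\[
	\PP\big[e_{G_p}(A)\ge t\big]\le \binom{\binom{a}{2}}{t}p^t\le\Big(\frac{e\binom{a}{2}p}{t}\Big)^t\le\Big(\frac{ea}{n}\Big)^{apn/2},
\]
using $\binom{a}{2}\le a^2/2$, $t\ge apn/2$, and $a\le n/4$. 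Summing over the at most $(en/a)^a$ subsets of size $a$ and then over $a\le n/4$, the total is at most $\sum_{a\le n/4}\exp\!\big(a\,[\,\ln(en/a)-\tfrac{pn}{2}\ln(n/(ea))\,]\big)$. Writing $L:=\ln(n/a)\ge\ln 4$ and using $pn>20$, the bracket is at most $(L+1)-10(L-1)=11-9L\le 11-9\ln 4<0$; a short calculus check shows that $\exp\!\big(a(11-9L)\big)$ is maximised over $a\in[1,n/4]$ at $a=1$, where it equals $e^{11}n^{-9}$, so the whole sum is $O(n^{-8})=o(1)$.

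I do not expect a real obstacle here: the only delicate point is the bookkeeping of absolute constants in the two union bounds. In part (i) the per-pair failure exponent $abp/144\ge 250n/144$ must beat $n\ln 3$, which the factor $250$ in the hypothesis $abp\ge 250n$ comfortably ensures; in part (ii) the hypothesis $np>20$ is precisely what makes the exponent $11-9\ln(n/a)$ negative for all $a\le n/4$ and leaves room for the sum over $a$ to be $o(1)$. Both arguments mirror the proof of Proposition~3.2 in~\cite{AKS}, the only change being that the lower bound $e_G(A,B)\ge ab/3$ now comes from pseudorandomness rather than from $G$ being complete.
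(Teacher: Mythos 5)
Your proof is correct. Part~(i) matches the paper's argument essentially line by line: pseudorandomness (plus $p\le 1$, so $ab\ge 250n$) gives $e_G(A,B)\in[ab/3,\,ab]$, Chernoff's bound is applied to each tail, and a union bound over the at most $3^n$ (the paper uses the cruder $2^{2n}$) ordered pairs of disjoint sets is closed because the per-pair exponent $\ge 250n/144$ beats the entropy term. Part~(ii) is where your route genuinely differs: the paper notes that since $G\subseteq K_n$ one may couple $G_p$ inside $\gnp{n}{p}$ and then simply quotes the corresponding statement from Proposition~3.2 of AKS, whereas you give a direct, self-contained first-moment computation. Both are valid --- the paper's is shorter, yours avoids an external black box --- and your constants check out: with $t=\lceil apn/2\rceil$ one has $\binom{\binom{a}{2}}{t}p^t\le(ea/n)^{apn/2}$, and the exponent $a\bigl(11-9\ln(n/a)\bigr)$ is negative on $[1,n/4]$ since $np>20$. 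One small remark on the ``short calculus check'': the function $f(a)=a(11-9\ln(n/a))$ is convex with an interior minimum, so it is not maximised at $a=1$ by monotonicity but rather because the maximum over $[1,n/4]$ is attained at an endpoint, and the value $e^{11}n^{-9}$ at $a=1$ dominates the value $e^{-\Theta(n)}$ at $a=n/4$. This still yields the $O(n^{-8})=o(1)$ bound you state.
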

		\begin{proof}
			For the first part of the claim note that, because \(G\) is pseudorandom and $ab \ge 250 n$, we have
			\(e_G(A,B) \ge a b /3\), so 
			\(\EE[e_{G_p}(A,B)] \ge p a b /3\), and then the lower bound follows from a standard application of Chernoff's bound and the union bound over the at most \(2^{2n}\) choices for \(A,B\).
			For the upper bound,
			the trivial upper bound \(e_{G}(A,B) \le a b\)
			implies \(\EE[e_{G_p}(A,B)] \le a b p\)
			and then again a standard application of Chernoff's bound and the union bound yield the desired result.
		
			The second part follows directly from the second part of \cite[Proposition 3.2]{AKS} because we can couple $G$ with $\gnp{n}{p}$ so that $G \subseteq \gnp{n}{p}$.
		\end{proof}

	We now deduce \Cref{prop:sample-pseudorandom-tree} from \Cref{lemma:similar-to-AKS}.

	\begin{proof}[Proof of \Cref{prop:sample-pseudorandom-tree}]
		Let $1/C \ll \eps,1/d$ and set \(\theta := 0.01\eps\) and \(D := C/10\).
		Let $G^\ast$ be the subgraph of $G_p$ given by \Cref{lemma:similar-to-AKS}.
		We verify that \(G^\ast\) satisfies the conditions of \Cref{thm:AKS_expander_contains_trees} with parameters \(d\) and 
		\(\eps_1 := \frac{\eps - \theta}{1-\theta} \in [0.99\eps, \eps]\), and so
		\(K = K(\eps,d) = 20 \eps_1^{-1} d^2\log \left( 2 /\eps_1\right)\).
		
		Using that
		\(
		D \le \delta(G^\ast) \le \Delta(G^\ast) \le 25 D
		\), $D=C/10$ and $1/C \ll \eps, 1/d$, we have 
		\[
			(\Delta(G^\ast))^2 \le 625 D^2 \le \frac{1}{K}\exp\left(\frac{D}{8K}-1\right) \le \frac{1}{K}\exp\left(\frac{\delta(G^\ast)}{8K}-1\right)
		\]
		and hence the first condition is satisfied.
		
		To check the second condition, it suffices to show that 
		\[
		\frac{\eps_1\, D}{40 d^2 \log \left(2/\eps_1\right)}
		\ge 100 d \log D,
		\]
		which follows from the fact that  \(1/C \ll \eps_1, 1/d\) and that the function \( D/\log D\) is increasing.
		Hence \(G^\ast\) contains a copy of every tree of maximum degree \(d\) and of up to
		\((1-\eps_1)(1-\theta)n \ge (1-\eps)n\) vertices.
	\end{proof}

	Finally we prove \Cref{lemma:similar-to-AKS}
	\begin{proof}[Proof of \Cref{lemma:similar-to-AKS}]
		Assume that the conclusions of \Cref{lemma:simple-edge-bounds} hold.
		Let \(X\) be the set of \(\theta n /2\) vertices of largest degree in \(G_p\).
		Part \ref{itm:simple-b} of \Cref{lemma:simple-edge-bounds} implies that the number of edges in \(G_p[X]\) is at most \(\abs{X} p n/2 = 5D\abs{X}\).
		On the other hand, since
		\(
		p \abs{X} (n-\abs{X}) \ge 2 D \theta n \ge 250 n,
		\)
		using \(D^{-1} = 10 C^{-1} \ll \eps = 100 \theta\), Part \ref{itm:simple-a} of \Cref{lemma:simple-edge-bounds} implies that, with high probability, 
		\(e_{G_p}(X, V(G)\setminus X ) \le 3 \abs{X} n p/2 = 15 D\abs{X}\).
		Hence, 
		\(\sum_{v\in X} \deg_{G_p} (v) \le 25 D \abs{X}\), which implies there is a vertex in \(X\) with degree at most \(25 D\).
		By the definition of \(X\), it follows that \(G_p\) has at most \(\theta n /2\) vertices of degree greater than \(25 D\).

		Delete these vertices, denote the remaining graph by \(G'\) and observe $|V(G')| \ge (1-\theta/2)n$. We greedily remove from \(G'\) vertices of degree less than \(D\), until none are left.
		Suppose that we deleted at least \(\theta n /2\) vertices.
		Let \(Y\) be a subset of size \(\theta n /2\) of the deleted vertices.
		Then \(\deg_{G_p}(y, V(G')\setminus Y) \le D\) for each $y \in Y$,
		so 
		\(e_{G_p}(Y, V(G')\setminus Y) \le D \abs{Y}\).
		On the other hand, 
		\(
		p \abs{Y} \abs{V(G') \setminus Y} 
		\ge 5 D \theta (1-\theta) n
		\ge 250 n
		\)
		(using \(D^{-1} \ll \theta\)), so Part \ref{itm:simple-a} of \Cref{lemma:simple-edge-bounds} implies that 
		\(
		e_{G_p}(Y, V(G') \setminus Y) \ge \abs{Y} \abs{V(G') \setminus Y} p/4 > 2 D \abs{Y}\), which is a contradiction.
		Hence, the number of vertices that we deleted is at most \(\theta n /2\).
		Denote the resulting graph by \(G^\ast\) and observe that it satisfies the first two properties of the lemma.

		Suppose \(G^\ast\) fails to satisfy the third property of the lemma.
		Then there exist \(U\subseteq V(G^\ast)\) such that 
		\(G^\ast[U]\) has minimum degree at least \(100d\log D\) and is not a \((\frac 1 {2d+2}, d+1)\)-expander.
		This implies there is \(X\subseteq U\), of size 
		\(t\le \frac 1 {2d+2} \abs{U}\), such that for the set \(Y:=N_{G^\ast[U]}(X)\) it holds that \(\abs{Y} \le (d+1) t\).
		
		We first consider the case \(t \le \frac{\log D}{D} n\).
		By the minimum degree condition, we have
		\(e_{G_p}(X,Y) \ge 50d t \log D\).
		Let \(A_t\) be the event that there exist vertex sets 
		\(X, Y\) with \(\abs{X} = t\), \(\abs{Y} \le (d+1) t\), and
		\(e_{G_p}(X,Y) \ge 50d t \log D\).    
		Then we can bound \(\PP[A_t]\) as follows, where we remark that for the first inequality we use that the binomial coefficient is increasing until the middle layer and that for the penultimate inequality we use \(t < (d+1) t < (d+1) \frac{\log D}{D}n < n/2\) since \(D^{-1} = 10 C^{-1} \ll d\), and $\log(3e/10) \le -0.1$. 
		\begin{align*}
			\PP[A_t] &\le \binom{n}{t} \binom{n}{(d+1) t} \binom{t (d+1) t}{50 d t \log D} p^{50 d t \log D} \\
			&\le \left( 
			\frac{\e n}{t} \cdot 
			\left( \frac{\e n}{(d+1)t} \right)^{d+1} \cdot 
			\left( 
			\frac{\e (d + 1) t^2 p}{50dt \log D}
			\right)^{50 d \log D}
			\right)^t  \\
			&\le \left(
			\e \cdot \left(\frac{n}{t}\right)^{2d} \cdot 
			\left( \frac \e {d+1} \right)^{d+1}
			\left( \frac{t/n}{\log D /D} \right)^{50d \log D}
			\cdot \left( \frac{\e (d+1)}{5d} \right)^{50d \log D}
			\right)^t
			\\
			&\le
			\left( 
			\e \cdot \left( \frac{t/n}{\log D /D} \right)^{50d \log D  -2d}
			\cdot \left(\frac{D}{\log D}\right)^{2d}
			\cdot \left(\frac{3 \e}{10}\right)^{50 d \log D}
			\right)^t
			\\
			&\le 
			\left(
			\e^{-5d \log D + 2d\log D +1 }
			\cdot 
			\left( \frac{t/n}{\log D /D} \right)^{40d \log D}
			\right)^t
			\\
			&\le 
			\left( 
			\e^{-2d \log D} \cdot \left( \frac{t/n}{\log D /D} \right)^{40d \log D}
			\right)^t.
		\end{align*}
	For \(t< \log n\), this is at most
	\[
	\left( \frac{\log n/n}{\log D /D} \right)^{40d \log D}
	\le n^{-79}
	\]
	and for \(\log n \le t \le \frac {\log D} D n\) it is at most
	\[
	\e^{-2d \log D \cdot \log n} \le n^{-4}
	\]
	so in either case \(\PP[A_t] = o(n^{-1})\).

	Finally we consider the case \(t\ge \frac {\log D} D \cdot n\).
	Set \(Z := U \setminus (X \cup Y)\), and notice that \(e_{G_p}(X, Z) = 0\), since \(G^\ast[U]\) is an induced subgraph of \(G_p\).
	From \(\abs{U} \ge (2d + 2)t \), $|X|=t$ and \(\abs{Y} \le (d+1) t\) it follows that
	\(\abs{Z} \ge dt\).
	Let \(B_t\) be the event that there exist vertex subsets of size \(t\) and \(dt\), with no edge between them in \(G_p\).
	Then
	\begin{align*}
		\PP[B_t] &\le \binom{n}{t} \binom{n}{dt} (1-p)^{dt^2}
		\\
		&\le \left( \frac{\e n}{t} \cdot \left(  \frac{\e n}{dt}\right)^d \cdot \e^{-pdt} \right)^t\\
		&\le \left( 
		\left( \frac {\e n} t\right)^{2d}  \cdot \e ^{-pdt} 
		\right)^t
		= \left( 
		\left( \frac {\e n} t \right)^{2} \e ^{-pt}  \right)^{dt}
		\\
		&\le \left( 
		\left( \frac {\e n} {n \log D /D} \right)^{2}
		\e^{-\frac{10 D}{n} \cdot \frac{\log D}{D} n}
		\right)^{dt}\\
		&\le \left( D^2 D^{-10} \right)^{dt} = o(n^{-1}).
	\end{align*}
	Hence \(G^\ast\) fails to satisfy the third condition of the lemma with probability at most $\sum_{t=1}^{n} (\PP[A_t] + \PP[B_t])
	=o(1)$.

	Therefore, by the union bound, we conclude that with high probability \(G^\ast\) satisfies all conditions of the Lemma.
	\end{proof}

\end{document}